\theoremstyle{definition}
\newtheorem{theorem}{Theorem}[section]
\newtheorem{proposition}[theorem]{Proposition}
\newtheorem{lemma}[theorem]{Lemma}
\newtheorem{corollary}[theorem]{Corollary}
\newtheorem{definition}[theorem]{Definition}
\newtheorem{remark}[theorem]{Remark}
\newtheorem{example}[theorem]{Example}
\newcommand{\reg}{\mathrm{reg}}
\newcommand{\sing}{\mathrm{sing}}
\newcommand{\Kosz}{\operatorname{Kosz}}
\newcommand{\one}{\mathbf{1}}
\newcommand{\Hom}{\operatorname{Hom}}
\newcommand{\Span}{\operatorname{span}}
\newcommand{\rank}{\operatorname{rank}}
\newcommand{\codim}{\operatorname{codim}}
\newcommand{\D}{\operatorname{d}\!}
\newcommand{\CC}{\mathbb{C}}
\newcommand{\ZZ}{\mathbb{Z}}
\newcommand{\NN}{\mathbb{N}}
\newcommand{\PP}{\mathbb{P}}
\newcommand{\OO}{\mathcal{O}}
\newcommand{\Grass}{\operatorname{Grass}}
\newcommand{\Stief}{\operatorname{Stief}}
\newcommand{\GL}{\operatorname{GL}}
\newcommand{\diag}{\operatorname{diag}}
\newcommand{\Stab}{\operatorname{Stab}}
\title{On the topology of determinantal links}
\author{Matthias Zach}
\begin{document}

\begin{abstract}
  We study the cohomology of the generic determinantal 
  varieties $M_{m,n}^s = \{ \varphi \in \CC^{m\times n} : \rank \varphi <s \}$,
  their polar multiplicities, 
  their sections $D_k \cap M_{m,n}^s$ by generic hyperplanes $D_k$ 
  of various codimension $k$, and the real and complex links 
  of the spaces $(D_k\cap M_{m,n}^s,0)$. 
  Such complex links were shown to provide the basic building blocks in a 
  bouquet decomposition for the (determinantal) smoothings of 
  smoothable isolated determinantal singularities. 
  The detailed vanishing topology of such singularities was still not fully understood 
  beyond isolated complete intersections and a few further special cases. 
  Our results now allow to compute all distinct Betti numbers of any determinantal smoothing.
\end{abstract}

\maketitle

\tableofcontents

\newpage 

\section{Introduction and results} 

The motivation for this paper is to understand the vanishing (co-)homology of 
isolated \textit{determinantal singularities} $(X,0) \subset (\CC^p,0)$ 
(abbreviated as IDS in the following) which admit a 
\textit{determinantal smoothing} $M$. Despite their seemingly odd definition
(see below), 
they are quite frequently encountered; for instance any normal surface 
singularity in $(\CC^4,0)$ and any so-called ``space curve'' $(C,0) \subset (\CC^3,0)$ 
is determinantal by virtue of the Hilbert-Burch theorem, see 
e.g. \cite{Wahl16} and \cite{Schaps77}. It was shown in \cite{Zach18BouquetDecomp} 
that sections of the generic determinantal varieties 
$M_{m,n}^s = \{\varphi \in \CC^{m\times n} : \rank \varphi < s \}$ by 
hyperplanes $D_k' \subset \CC^{m\times n}$, which are of codimension $k$ 
and in general position off the origin, 
provide the fundamental building blocks in a bouquet decomposition of the 
determinantal smoothing $M$. 

In this paper we aim to study these building 
blocks 
\[
  \mathcal L^k(M_{m,n}^s,0) \cong M_{m,n}^s \cap D'_{k+1},
\]
which we shall refer to as \textit{complex links of codimension} $k$, 
and discuss the implications for the vanishing topology of various singularities. 
Our main result in this regard is that whenever $k$ is sufficiently big so that 
the hyperplane $D'_{k+1}$ misses the singular locus of $M_{m,n}^s$ and 
$\mathcal L^k(M_{m,n}^s,0)$
is smooth, then the cohomology in degrees $i$ 
\textit{below} the middle degree $d = \dim \mathcal L^k(M_{m,n}^s,0)$ 
is 
\[
  H^i(\mathcal L^k(M_{m,n}^s,0)) \cong H^i(\Grass(s-1,m)),
\]
that is, it is isomorphic to the cohomology of a Grassmannian, 
see (\ref{eqn:CohomologyOfSmoothComplexLinksBelowMiddle}).

In the general case, i.e. for arbitrary codimension $k$, we 
provide a formula (\ref{eqn:FormulaForEulerCharacteristic})
for the Euler characteristic of 
$\mathcal L^k(M_{m,n}^s,0)$ based on the polar methods developed 
by L\^e and Teissier. 
We proceed to study the polar multiplicities $e_{m,n}^{r,k}$ of 
the generic determinantal varieties 
$(M_{m,n}^{r+1},0)$ 
in Section \ref{sec:PolarMultiplicities} and obtain  
a formula (\ref{eqn:FormulaForThePolarMultiplicity}) for these 
numbers as an integral of Segre classes.
It should be noted that this formula does not depend on 
the choice of any generic hyperplane section and it can be 
implemented effectively in a computer algebra system. 
However, we were unable to prove a closed formula for the numbers 
$e_{m,n}^{r,k}$ as a function of $m,n,r$, and $k$;
even though the calculated Tables 
\ref{tab:PolarMultiplicitiesHilbertBurch}, and 
\ref{tab:PolarMultiplicities2byNRank1}
-- \ref{tab:PolarMultiplicities5byNRank1and4}
yield several patterns.

Coming back to the case of smooth links, 
this allows then to also compute the middle Betti number of 
$\mathcal L^k(M_{m,n}^s,0)$ (but unfortunately not the full cohomology group 
with integer coefficients in general, i.e. there could also be torsion). 

\medskip
We would like to point out that many methods developed in this 
article apply much more generally to compute the 
cohomology of the links of higher codimension for spaces 
$(X,0) \subset (\CC^q,0)$ which decompose into orbits 
of a Lie group action $G \times X \to X$. 
The generic determinantal varieties treated here are just one particular, 
accessible example.
\medskip

We continue with the discussion of the 
implications of these results for arbitrary smoothable IDS.
Let 
\[
  A \colon (\CC^p,0) \to (\CC^{m\times n},0) 
\]
be a holomorphic map germ to the space of $m\times n$-matrices. 
We say that $(X,0)$ is \textit{determinantal for} $A$ and of type $(m,n,s)$ 
if we have $(X,0) = (A^{-1}( M_{m,n}^s),0)$ 
and such that $\codim(X,0) = \codim(M_{m,n}^s,0) = (m-s+1)(n-s+1)$. 
In this case we also write $(X,0) = (X_A^s,0)$ to emphasize 
the chosen determinantal structure for $(X,0)$. 

By definition, a \textit{determinantal deformation} is induced 
by an unfolding $\mathbf A(x,t)$ of $A(x)$ on parameters $t = (t_1,\dots,t_k)$ such that 
$A_0 = \mathbf A(-,0)$ is equal to the original map $A$ and 
$A_t= \mathbf A(-,t)$ 
is transverse to $M_{m,n}^s$ in a stratified sense for $t \neq 0\in \CC^k$.
Then $X_t = A_t^{-1}(M_{m,n}^s)$ is a smoothing of $(X,0)$, provided 
that $p$ is strictly smaller than the codimension $c = (m-s-2)(n-s-2)$ of the 
singular locus of $M_{m,n}^s$. In this case we shall speak of 
the \textit{determinantal smoothing}\footnote{Note that in 
  the case of an isolated determinantal hypersurface singularity given by 
  $(X,0) = (\{ \det A = 0 \},0) \subset (\CC^p,0)$ 
a smoothing of the singularity always exists, 
but it can be realized by a determinantal deformation 
only when $p<4$.} of $(X_A^s,0)$ 
and it can be shown that, up to diffeomorphism, 
this smoothing depends only 
on the choice of the determinantal structure for $(X,0)$, but not on the particular 
unfolding of the matrix. We will therefore also write $M_A^s$ for the 
determinantal smoothing $X_t$ of $(X,0)$.

Note that every 
complete intersection singularity $(X,0)\subset (\CC^p,0)$ 
of codimension $k$ is determinantal 
of type $(1,k,1)$ for the matrix $A$ whose entries are given by a regular 
sequence generating the ideal of $(X,0)$. In this case the determinantal deformations 
coincide with the usual deformations of the singularity. We refer 
to \cite{FruehbisKruegerZach21} for the details on determinantal singularities 
and their deformations and further references for the above statements.

The aforementioned bouquet decomposition of the determinantal smoothing 
$M_A^s$ of an IDS $(X,0) = (X_A^s,0)$ from \cite{Zach18BouquetDecomp} reads
\begin{equation}
  M_A^s \cong_{\mathrm{ht}} \mathcal L^{mn-p-1}(M_{m,n}^s,0) \vee 
  \bigvee_{i=1}^\lambda S^{\dim(X_A^s,0)}
  \label{eqn:BouquetDecomposition} 
\end{equation}
where we denote by $S^d$ the unit sphere of real dimension $d$. 
Combining this with our results we find

\begin{theorem}
  Let $M_A^s$ be the determinantal smoothing of a $d$-dimensional smoothable 
  isolated determinantal singularity $(X_A^s,0) \subset (\CC^p,0)$ 
  of type $(m,n,s)$ with $s\leq m\leq n$. Then the truncated cohomology 
  \[
    H^{\leq d}(\Grass(s-1,m)) 
    \subset H^\bullet(M_A^s) 
  \]
  embeds into the cohomology of $M_A^s$ with a quotient concentrated 
  in cohomological degree $d$. 
  Moreover, the left hand side is generated as an algebra 
  by the Segre classes of the vector bundle 
  $E$ on $M_A^s$ whose fiber at a point $x$ is presented by the 
  perturbed matrix 
  \[
    \CC^n \overset{A_t(x)}{\longrightarrow} \CC^m \to E_x \to 0
  \]
  which defines the smoothing $M_A^s$.
  \label{thm:MainConsequence}
\end{theorem}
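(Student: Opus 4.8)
The plan is to combine the bouquet decomposition (\ref{eqn:BouquetDecomposition}) with the behaviour of smooth complex links below their middle degree. Write $r=s-1$ and $d=\dim(X_A^s,0)$; then the complex link $\mathcal L:=\mathcal L^{mn-p-1}(M_{m,n}^s,0)$ occurring in (\ref{eqn:BouquetDecomposition}) has complex dimension $d$, and (\ref{eqn:BouquetDecomposition}) reads $M_A^s\cong_{\mathrm{ht}}\mathcal L\vee\bigvee_{i=1}^{\lambda}S^{d}$. Since $p$ is strictly smaller than the codimension $c$ of $\Sing M_{m,n}^s=M_{m,n}^{r}$, the transverse perturbation satisfies $A_t^{-1}(M_{m,n}^{r})=\emptyset$, so $A_t(x)$ has constant rank $r$ on $M_A^s$; hence $E=\coker A_t$ is a vector bundle of rank $m-r$, and the subspaces $\im A_t(x)\subseteq\CC^{m}$ define a classifying map $\gamma\colon M_A^s\to\Grass(s-1,m)$ with $\gamma^{*}S\cong\im A_t$ and $\gamma^{*}Q\cong\coker A_t=E$ for the universal sub- and quotient bundles $S,Q$. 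As $H^{\bullet}(\Grass(s-1,m))$ is generated as a ring by the Chern classes of $Q$, equivalently by its Segre classes, the image of $\gamma^{*}$ is precisely the subalgebra generated by the Segre classes $s_i(E)=\gamma^{*}s_i(Q)$. It therefore suffices to prove that $\gamma^{*}$ is injective in degrees $\leq d$: then its image in that range is the truncation $H^{\leq d}(\Grass(s-1,m))$, which is the algebra generated by the $s_i(E)$ (there is nothing in higher degrees since $H^{>d}(M_A^s)=0$), and the ``quotient concentrated in degree $d$'' assertion comes for free, because $\gamma^{*}$ is an isomorphism in degrees $<d$ while both $H^{\leq d}(\Grass(s-1,m))$ and $H^{\bullet}(M_A^s)$ vanish above degree $d$.

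For the isomorphism below degree $d$ one uses that, by the construction of (\ref{eqn:BouquetDecomposition}) in \cite{Zach18BouquetDecomp}, the inclusion $\mathcal L\hookrightarrow M_A^s$ induces an isomorphism on $H^{<d}$ and is compatible with the classifying maps, so that $\gamma|_{\mathcal L}$ is, up to homotopy, the classifying map underlying (\ref{eqn:CohomologyOfSmoothComplexLinksBelowMiddle}); the latter then yields the isomorphism $H^{<d}(\Grass(s-1,m))\xrightarrow{\ \sim\ }H^{<d}(M_A^s)$. And $H^{>d}(M_A^s)=0$ holds simply because $M_A^s$ is a Stein manifold of complex dimension $d$. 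Thus everything reduces to injectivity of $\gamma^{*}\colon H^{d}(\Grass(s-1,m))\to H^{d}(M_A^s)$, and since this map factors through $\gamma^{*}$ on $\mathcal L$ via the inclusion just mentioned, it is enough to prove that injectivity for $\mathcal L$.

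To that end I would pass to the standard resolution $\beta\colon\widetilde M=\Tot\big(\Hom(\underline{\CC^{n}},S)\big)\to M_{m,n}^s$, which is an isomorphism over the smooth locus. Realising $\mathcal L$ as $M_{m,n}^s\cap D'$ for a generic affine-linear subspace $D'\subset\CC^{m\times n}$ of codimension $mn-p$ — which misses $\Sing M_{m,n}^s$, again by $p<c$, so that $\mathcal L$ lies in the smooth locus — the map $\beta$ restricts to an isomorphism $\widetilde{\mathcal L}:=\beta^{-1}(\mathcal L)\xrightarrow{\ \sim\ }\mathcal L$, and the composite $\pi\colon\widetilde{\mathcal L}\to\Grass(s-1,m)$ with the bundle projection has fibre $D'\cap\Hom(\CC^{n},S_W)$ over $W$ — the intersection of the affine-linear $D'$ with a linear fibre of $\Hom(\underline{\CC^{n}},S)$, hence itself affine-linear, so contractible or empty. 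In the favourable case where $D'$ can be chosen so that all these fibres have the same dimension (which one can arrange in many cases), $\widetilde{\mathcal L}$ is a torsor under a sub-bundle $\mathcal K\subseteq\Hom(\underline{\CC^{n}},S)$, whence $\mathcal L\simeq\Grass(s-1,m)$ and $\gamma^{*}$ is an isomorphism outright. In general the fibre dimension jumps along a proper closed subvariety $Z\subseteq\Grass(s-1,m)$, and the task is to bound both $\codim Z$ and $\codim_{\widetilde{\mathcal L}}\pi^{-1}(Z)$ by a standard degeneracy-locus estimate for the bundle morphism $\underline{\CC^{p}}\to\Hom(\underline{\CC^{n}},Q)$ induced by $D'$, and then to combine this — via a cohomological analysis of $\pi$ (whose fibres are contractible or empty) together with a sharpened form of the argument behind (\ref{eqn:CohomologyOfSmoothComplexLinksBelowMiddle}) — into the statement that $\gamma|_{\mathcal L}$ induces an injection on $H^{d}$. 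I expect the codimension estimates for $Z$ and $\pi^{-1}(Z)$, and the bookkeeping needed to carry the smooth-link computation up through the middle degree $d$ rather than merely below it, to be the main obstacle.
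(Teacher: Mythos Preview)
Your overall strategy matches the paper's: reduce to the complex link $\mathcal L=\mathcal L^{mn-p-1}(M_{m,n}^s,0)$ via the bouquet decomposition, identify the classifying map $\gamma$ to $\Grass(r,m)$ with the one underlying (\ref{eqn:CohomologyOfSmoothComplexLinksBelowMiddle}), and conclude that the image of $\gamma^*$ is the algebra generated by the Segre classes of $E$. The reduction and the treatment of degrees $<d$ are fine.

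The gap is in the last paragraph. You treat the middle-degree injectivity $H^d(\Grass(r,m))\hookrightarrow H^d(\mathcal L)$ as an open problem and propose a separate argument via the resolution $\widetilde M=\Tot(\Hom(\underline{\CC^n},S))$, degeneracy loci $Z$, and codimension estimates---an argument you yourself flag as incomplete. But this injectivity is already the content of (\ref{eqn:CohomologyOfSmoothComplexLinksMiddle}), and it is proved by the \emph{same} inductive variation-sequence mechanism (Proposition~\ref{prp:IsomorphismLowDimensionalHomology}) that yields the isomorphisms (\ref{eqn:CohomologyOfSmoothComplexLinksBelowMiddle}) you already invoke. The point is that Lemma~\ref{lem:VariationOnRegularLocus} gives not only isomorphisms $H^k(\partial X_{\reg})\to H^k(M_{\reg})$ for $k<d-1$ but also \emph{injectivity} at $k=d-1$, and this injectivity survives the induction on codimension. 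So the ``sharpened form'' you are looking for is precisely Proposition~\ref{prp:IsomorphismLowDimensionalHomology} as stated, and the ``main obstacle'' you anticipate does not exist. Once you replace your final paragraph by a reference to (\ref{eqn:CohomologyOfSmoothComplexLinksMiddle}), your argument coincides with the paper's.
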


\begin{landscape}
\begin{table}
  \begin{tabular}{|c|cccccccccc|}
    \hline
    $m \backslash k$ & 
    0 & 1 & 2 & 3 & 4 & 5 & 6 & 7 & 8 & 9 \\
    \hline
    1 &
    1 & 0 & 0 & 0 & 0 & 0 & 0 & 0 & 0 & 0 \\ 
    2 &
    3 & 4 & 3 &0  &0  &0  &0  &0  &0  &0  \\
    3 &
    6 & 16 & 27 & 24 & 10 & 0& 0& 0& 0& 0\\
    4 & 
    10 & 40 & 105 & 176 & 190 & 120 & 35 & 0& 0& 0\\
    5 &
    15 & 80 & 285 & 696 & 1200 & 1440 & 1155 & 560 & 126 &0\\
    6 & 
    21 & 140 & 630 & 2016 & 4760 & 8352 & 10815 & 10080 & 6426 & 2520 \\
    7 & 
    28 & 224 & 1218 & 4816 & 14420 & 33216 & 59143 & 80976 & 83916 & 63840 \\
    8 & 
    36 & 336 & 2142 & 10080 & 36540 & 104112 & 235557 & 424384 & 606564 & 680400 \\
    9 & 
    45 & 480 & 3510 & 19152 & 81480 & 276480 & 758205 & 1691920 & 3077838 & 4551840 \\
    10 & 
    55 & 660 & 5445 & 33792 & 165000 & 649440 & 2091705 & 5563360 & 12278970 & 22518200 \\
    \hline
    \hline

    $m \backslash k$ & 10 & 11 & 12 &13 &14 &15 &16 &17 &18 &19 \\
    \hline
    6 &
    462 & 0 & 0 & 0 & 0 & 0 & 0 & 0 & 0 & 0 \\
    7 & 33726 & 11088 & 1716 & 0& 0& 0& 0& 0& 0 &0 \\
    8 & 587202 & 376992 & 169884 & 48048 & 6435 &0&0&0&0&0\\
    9 & 5430810 & 5155920 & 3809520 & 2114112 & 830115 & 205920 & 24310 &0&0&0 \\
    10 & 34240800 & 42926400 & 43929600 & 36132096 & 23326875 & 11394240 & 3962530 & 875160 & 92378 & 0 \\
    \hline
  \end{tabular}
  \caption{Polar multiplicities $e_{m,m+1}^{m-1,k}$ for the generic determinantal varieties 
  $(M_{m,m+1}^m,0)$ appearing in the context of the Hilbert-Burch theorem}
  \label{tab:PolarMultiplicitiesHilbertBurch}
\end{table}

\end{landscape}

In the case of smoothable isolated Cohen-Macaulay codimension $2$ 
singularities $(X,0) \subset (\CC^p,0)$
we obtain some more specific results which might be of particular interest.
As remarked earlier, these singularities admit a canonical determinantal 
structure by virtue of the Hilbert-Burch theorem (see \cite{Hilbert90}, \cite{Burch68}, or 
\cite{Eisenbud95} for a textbook): 
Let $f_0,\dots,f_m \in \CC\{x_1,\dots,x_p\}$ be a minimal set of generators 
for the ideal $I$ defining $(X,0)$. Then the minimal free resolution of 
$\OO_{X,0} = \CC\{x_1,\dots,x_p\}/I$ over the ring $\OO_p = \CC\{x_1,\dots,x_p\}$
takes the form 
\[
  0 \to \OO_p^m \overset{A^T}{\longrightarrow} \OO_p^{m+1} \overset{f}{\longrightarrow} 
  \OO_p \to \OO_{X,0} \to 0 
\]
for some matrix $A \in \OO^{m\times (m+1)}$ and then 
$(X,0)= (X_A^m,0)$ is determinantal 
of type $(m,m+1,m)$ for the matrix $A$. Moreover, any deformation of 
$(X,0)$ is also automatically determinantal (see \cite{Schaps77}), i.e. 
it arises from a perturbation of $A$. Isolated Cohen-Macaulay 
codimension $2$ singularities exist up to dimension $d = \dim(X,0) \leq 4$ 
and they are smoothable if and only if this inequality is strict.

The generic determinantal varieties appearing in the context of the 
Hilbert-Burch theorem are naturally limited to $M_{m,m+1}^m$ and 
we list their polar multiplicities for values $0\leq m\leq 10$ in Table 
\ref{tab:PolarMultiplicitiesHilbertBurch}. From 
these numbers we can then also compute the Euler characteristic of 
the smooth complex links of their generic hyperplane sections, 
listed in Table \ref{tab:EulerCharacteristicsLinksHilbertBurch}.

\begin{table}
  \centering
  \begin{tabular}{|c|cccccccccc|}
    \hline
    $d \backslash m$ & 
    1 & 2 & 3 & 4 & 5 & 6 & 7 & 8 & 9 & 10 \\
    \hline 
    0 & 1 & 3 & 6 & 10 & 15 & 21 & 28 & 36 & 45 & 55 \\
    1 & 0 & -1 & -10 & -30 & -65 & -119 & -196 & -300 & -435 & -605 \\ 
    2 & 0 & 2 & 17  & 75  & 220 & 511 &  1022 & 1842 & 3075 & 4840 \\
    3 & 0 & 2 & -7 & -101 & -476 & -1505 & -3794 & -9138 & -16077 & -28952 \\
    \hline
  \end{tabular}
  \caption{The Euler characteristic of the smooth complex links 
    $\mathcal L^{m(m+1)-d-3}(M_{m,m+1}^m,0)$ of dimension $d$ of 
  the generic determinantal varieties appearing in the context of the 
Hilbert-Burch theorem}
  \label{tab:EulerCharacteristicsLinksHilbertBurch}
\end{table}

As a consequence of Theorem \ref{thm:MainConsequence} we obtain: 
\begin{corollary}
  Let $(X,0) \subset (\CC^5,0)$ be a Cohen-Macaulay germ of codimension 
  $2$ with an isolated singularity at the origin and $M$ its smoothing. 
  Then $H^2(M) \cong \ZZ$ is free of rank one and generated 
  by the Chern class of the canonical bundle on $M$. 
  \label{cor:SecondCohomologyGroupSmoothingICMC2Threefold}
\end{corollary}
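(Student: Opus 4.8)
The plan is to obtain $H^{\le 2}(M)$ directly from Theorem~\ref{thm:MainConsequence} and then to recognise, again from that theorem, a degree-$2$ algebra generator of $H^\bullet(M)$ as the first Chern class of $\omega_M$ by a short duality computation. By the Hilbert--Burch theorem $(X,0)$ is determinantal of type $(m,m+1,m)$, where $m+1$ is the minimal number of generators of its defining ideal and $A$ is the corresponding matrix; here $m\ge 2$, the case $m=1$ being that of a complete intersection, for which $H^2(M)=0$ instead. Since $d=\dim(X,0)=3<4$ the singularity is smoothable, and its smoothing is the determinantal smoothing $M=M_A^m$. Applying Theorem~\ref{thm:MainConsequence} with $(s,m,n,p,d)=(m,m,m+1,5,3)$ and using $\Grass(s-1,m)=\Grass(m-1,m)\cong\PP^{m-1}$, we obtain an embedding $H^{\le 3}(\PP^{m-1})\hookrightarrow H^\bullet(M)$ whose cokernel is concentrated in degree $3$; hence $H^i(M)\cong H^i(\PP^{m-1})$ for $i<3$, so that $H^0(M)=\ZZ$, $H^1(M)=0$, and $H^2(M)\cong H^2(\PP^{m-1})=\ZZ$ is free of rank one.

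Next I would exhibit the generator. By the second part of Theorem~\ref{thm:MainConsequence} the subring $H^{\le 3}(\PP^{m-1})\subset H^\bullet(M)$ is generated as an algebra by the Segre classes of $E=\coker\bigl(\OO_M^{m+1}\xrightarrow{A_t}\OO_M^m\bigr)$. Since $\{\rank\le m-2\}$ has codimension $2\cdot 3=6>5=p$ in $\CC^{m(m+1)}$, the perturbation $A_t$ avoids the singular locus of $M_{m,m+1}^m$, so $A_t(x)$ has constant corank $1$ along $M$ and $E$ is a holomorphic line bundle. The only Segre class of $E$ lying in degree $2$ is $s_1(E)=\pm c_1(E)$, so the degree-$2$ part of the subalgebra generated by the Segre classes is $\ZZ\cdot c_1(E)$, and by the first part this must coincide with $H^2(M)\cong\ZZ$. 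Thus $c_1(E)$ is a generator of $H^2(M)$.

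Finally I would identify $E$ with $\omega_M$. Over a Stein ball $\omega_{\CC^5}$ is trivial, and as $M$ is smooth --- in particular Cohen--Macaulay --- of codimension $2$ one has $\omega_M\cong\mathcal{E}xt^2_{\OO_{\CC^5}}(\OO_M,\OO_{\CC^5})$. Since $M$ is reduced of the expected codimension $2$, the maximal minors of $A_t$ generate $\mathcal{I}_M$, and the minimal free resolution is the Hilbert--Burch complex $0\to\OO_{\CC^5}^m\xrightarrow{A_t^T}\OO_{\CC^5}^{m+1}\to\OO_{\CC^5}\to\OO_M\to 0$, the last map being given by the maximal minors of $A_t$. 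Dualizing over $\OO_{\CC^5}$ gives $\mathcal{E}xt^2_{\OO_{\CC^5}}(\OO_M,\OO_{\CC^5})\cong\coker\bigl(\OO_{\CC^5}^{m+1}\xrightarrow{A_t}\OO_{\CC^5}^m\bigr)$, and by Cramer's rule this cokernel is annihilated by $\mathcal{I}_M$, hence is an $\OO_M$-module and as such equals $E$. Therefore $\omega_M\cong E$ and $c_1(\omega_M)=c_1(E)$ generates $H^2(M)\cong\ZZ$, as claimed.

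The only genuinely non-formal step is the last one: pinning down the canonical bundle of the (generally non-complete-intersection) determinantal threefold $M$ as the cokernel line bundle $E$ of Theorem~\ref{thm:MainConsequence}, and in particular checking that the $\OO_{\CC^5}$-cokernel of $A_t$ is supported scheme-theoretically on $M$, so that the $\mathcal{E}xt$-sheaf really is the $\OO_M$-module $E$. The remaining ingredients are the formal application of Theorem~\ref{thm:MainConsequence} together with the elementary cohomology ring of $\PP^{m-1}$.
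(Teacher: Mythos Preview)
Your proof is correct and follows essentially the same route as the paper: apply Theorem~\ref{thm:MainConsequence} to the Hilbert--Burch determinantal structure to read off $H^2(M)\cong H^2(\PP^{m-1})\cong\ZZ$ generated by $c_1(E)$, and then identify $E$ with $\omega_M$ by dualizing the Hilbert--Burch resolution to compute $\mathcal{E}xt^2_{\OO_{\CC^5}}(\OO_M,\OO_{\CC^5})$. Your write-up is in fact more explicit than the paper's (which compresses the $\omega_M\cong E$ step into a single sentence), and you correctly flag the caveat $m\ge 2$---for $m=1$ the singularity is a complete intersection and $H^2(M)=0$, a case the corollary as stated does not cover.
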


\noindent
This has been conjectured by the author in \cite{Zach18} where 
a special case of Corollary \ref{cor:SecondCohomologyGroupSmoothingICMC2Threefold} 
was obtained for those singularities defined by $2\times 3$-matrices. 
Our results on the Euler characteristic of the complex links also give 
lower bounds for the third Betti number 
\begin{equation}
  b_3 \geq - \chi\left( \mathcal L^{m(m+1)-6}(M_{m,m+1}^m,0) \right) - 2
  \label{eqn:BoundForMiddleBettiNumberThreefolds}
\end{equation}
which can be read off directly from Table \ref{tab:EulerCharacteristicsLinksHilbertBurch} 
depending on the size of the matrix $m$. Unfortunately, we were unable 
to prove a closed algebraic formula for this number as a function of $m$.

While in the case of threefolds the cohomology of the 
complex link prominently 
sticks out in the sense that it entirely covers the nontrivial contributions 
outside the middle degree, the situation is a little more hidden 
in dimensions $d<3$ since then all the non-trivial (reduced) cohomology 
of the smoothing $M$ is concentrated in the middle degree. 
Nevertheless, using Theorem \ref{thm:MainConsequence}, 
a lower bound for the middle Betti number can be read 
off from Table \ref{tab:EulerCharacteristicsLinksHilbertBurch} 
as 
\begin{equation}
  b_2 \geq \chi\left( \mathcal L^{m(m+1)-5}(M_{m,m+1}^m,0) \right) - 2
  \label{eqn:BoundForMiddleBettiNumberSurfaces}
\end{equation}
for smoothings of isolated Cohen-Macaulay codimension $2$ surfaces 
and 
\begin{equation}
  b_1 \geq -\chi\left( \mathcal L^{m(m+1)-4}(M_{m,m+1}^m,0) \right) - 1
  \label{eqn:BoundForMiddleBettiNumberSpaceCurves}
\end{equation}
in the case of space curves in $(\CC^3,0)$. 
The proofs of Theorem \ref{thm:MainConsequence} and Corollary 
\ref{cor:SecondCohomologyGroupSmoothingICMC2Threefold}
will be given in the last section.

\begin{remark}
  Wahl has shown in \cite{Wahl16} that for a normal surface singularity 
  $(X,0) \subset (\CC^4,0)$ which is not Gorenstein, 
  the second Betti number $\mu$ of the 
  smoothing $M$ and the Tjurina number $\tau$ of $(X,0)$ obey an 
  inequality 
  \[
    \mu \geq \tau - 1
  \]
  with equality whenever $(X,0)$ is quasi-homogeneous. In particular, this 
  applies to all generic hyperplane sections $(X,0) = (D \cap M_{m,m+1}^m,0) \subset (D,0)$ 
  for $m>1$ and $D$ of dimension $4$. It follows from the above that 
  $\chi(M) = \tau$ and the first few values can be read off from 
  Table \ref{tab:EulerCharacteristicsLinksHilbertBurch}. Comparing Wahl's results 
  with the bouquet decomposition (\ref{eqn:BouquetDecomposition}) we see 
  that for non-linear quasi-homogeneous singularities, 
  the difference of the numbers $\tau$ and $\lambda$ is determined by the offset 
  \[
    \tau - \lambda = \chi\left( \mathcal L^{m(m+1)-5}(M_{m,m+1}^m,0) \right)
  \]
  depending only on the size of the matrix $m >1$. This is different 
  compared to the case of isolated complete intersections (the case $m=1$) 
  where we have equalities $\tau = \mu = \lambda$; see \cite{Wahl85}.
\end{remark}

\section{Real and complex links of higher codimension} 

\subsection{Definitions}

Let $X \hookrightarrow U \subset \CC^n$ be the closed embedding of an 
equidimensional reduced complex analytic variety of dimension $d$ in some 
open set $U$ of $\CC^n$ and suppose $\{ V^\alpha\}_{\alpha\in A}$ 
is a complex analytic Whitney stratification for $X$. Such stratifications 
always exist; a construction of a 
unique minimal stratification has been described in \cite{LeTeissier81} and 
if not specified further, we will in the following always assume $X$ to be endowed 
with its minimal Whitney stratification. Moreover, we may suppose that 
every stratum is connected for if this was not the case, we could replace 
it by its distinct irreducible components. 

Recall the notions of the \textit{real} and \textit{complex links} of $X$ 
along its strata. These can be defined as follows: Let $V^\alpha$ be 
a stratum of $X$ and $x \in V^\alpha \subset X$ an arbitrary point. 
For simplicity, we may assume $x = 0 \in \CC^n$ to be the origin. Choose a
\textit{normal slice} to $V^\alpha$ through $x$, i.e. a 
submanifold $(N_x,x) \subset (\CC^n,x)$ 
of complementary dimension which meets $V^\alpha$ transversally in $x$.
Let $B_\varepsilon(x)$ be the closed ball of radius $\varepsilon$ centered 
at $x$. 
Then the real link of $X$ along the stratum $V^\alpha$ is 
\begin{equation}
  \mathcal K(X,V^\alpha) = X \cap N_x \cap \partial B_\varepsilon(x)
  \label{eqn:RealLinkAlongAStratum}
\end{equation}
for $1 \gg \varepsilon >0$ sufficiently small. 
Similarly, the complex link is 
\begin{equation}
  \mathcal L(X,V^\alpha) = X \cap N_x \cap B_\varepsilon(x) \cap l^{-1}(\{\delta\})
  \label{eqn:ComplexLinkAlongAStratum}
\end{equation}
for a sufficiently general linear form $l$ and $1\gg \varepsilon \gg |\delta|>0$. 
For a rigorous definition of these objects see for example 
\cite[Part I, Chapter 1.4]{GoreskyMacPherson88} and 
\cite[Part II, Chapter 2.2]{GoreskyMacPherson88}. There, one can also find 
a proof of the fact that the real and complex links are independent 
of the choices involved in their definition: They are invariants 
of the particular stratification of $X$ and unique up to non-unique homeomorphism.

\medskip

In this note we shall also be concerned with the \textit{real and complex links 
of higher codimension} for a germ $(X,0)\subset (\CC^n,0)$ at the point $0$. 

\begin{definition}
  \label{def:RealAndComplexLinksOfHigherCodimension}
  The real and complex links of codimension $i$ of $(X,0) \subset (\CC^n,0)$ 
  at the origin are the classical real and complex links of a 
  section $(X \cap D_i, 0)$ of $X$ with a sufficiently general plane $D_i \subset \CC^n$ 
  of codimension $i$.
\end{definition}

\noindent
Note that in this definition, the classical complex link $\mathcal L(X,0)$ 
is the complex link of codimension $0$, even though it has complex codimension 
$1$ as an analytic subspace of $X$. This choice was made in order to be 
compatible with the notation for the real links. The reader may think of 
the word ``link'' as an indicator to increment the codimension by one to arrive 
at the actual codimension of the space.

By ``sufficiently general'' we mean that $D_i$ has to belong to 
a Zariski open subset $U_i \subset \Grass(n-i,n)$ in the Grassmannian. 
This set consists of planes for which variation of $D_i$ results in a 
Whitney equisingular family in $X \cap D_i$ and we shall briefly discuss 
the existence of such a set. From this it will follow immediately that the 
real and complex links of higher codimension are invariants 
of the germ $(X,0)$ itself and unique up to non-unique homeomorphism. 

Consider the \textit{Grassmann modification} of $X$:
\[
  G_i X = \{ (x,D) \in X \times \Grass(n-i,n) : x \in D \}
\]
where by $\Grass(n-i,n)$ we denote the Grassmannian of codimension $i$ planes 
in the ambient space $\CC^n$ through the point $0$. 
It comes naturally with two projections 
\begin{equation}
  \xymatrix{ 
    G_i X \ar[d]^{\rho} \ar[r]^-{\pi} &
    \Grass(n-i,n) \\
    X &
  }
  \label{eqn:GrassmannModification}
\end{equation}
where by construction $\rho(\pi^{-1}(\{D\})) = X \cap D$ for 
any plane $D \in \Grass(n-i,n)$. 
The Grassmannian itself has a natural embedding 
into $G_i X$ as the zero section $E_0$ of $\pi$ and we may think of $E_0$ 
as parametrizing the intersections of $X$ with planes of codimension $i$.
For $i\leq d = \dim X$ the intersection of any plane $D$ with $(X,0)$ will be of 
positive dimension at $0$ and 
therefore $E_0$ is necessarily contained in the closure of its complement.

Note that for an arbitrary point $p\in X$ the fiber $\rho^{-1}(\{p\})$ over $p$ in 
$G_i X$ consists of all planes $D \in \Grass(n-i,n)$ containing both $p$ and the origin. 
It follows that the restriction 
\[
  \rho \colon G_i X \setminus E_0 \to X \setminus \{0\}
\]
is not an isomorphism, but nevertheless a holomorphic fiber bundle with smooth fibers. 
In particular, any given Whitney stratification on $X$ 
determines a unique Whitney stratification of $G_i X \setminus E_0$ by pullback of 
the strata.

Given such a stratification on the complement of $E_0$, 
there exists some maximal Zariski open subset 
$U_i \subset E_0 \cong \Grass(n-i,n)$ such that all the strata of $G_i X \setminus E_0$ 
satisfy Whitney's conditions (a) and (b) along $U_i$. 
We may extend the stratification on the open subset $G_i X \setminus E_0$ to a Whitney 
stratification of $G_i X$, containing $U_i$ as a stratum. 
Since the Grassmannian $\Grass(n-i,n)$ is irreducible and complex 
analytic subsets have real codimension $\geq 2$, this stratum is necessarily dense 
and connected. 

A plane $D$ of codimension $i$ is sufficiently general 
in the sense of Definition \ref{def:RealAndComplexLinksOfHigherCodimension} if 
it belongs to $U_i$. 
The projection $\pi$ provides us with a canonical choice of normal slices 
along $U_i$ and by construction, we may therefore identify the real and complex 
links of $G_i X$ along $U_i$ with the real and complex links of the germ 
$(X \cap D,0) \subset (\CC^n,0)$ for any $D \in U_i$. Given that 
the classical real and complex links are invariants of the strata in a 
Whitney stratification, it follows immediately that the real and complex 
links of higher codimension for 
a germ $(X,0)$ are well defined invariants of the germ itself and 
unique up to non-unique homeomorphism.

\subsection{Synopsis with polar varieties}

While it was already established in the previous discussion that the real and 
complex links of higher codimension are invariants of the germ 
$(X,0) \subset (\CC^n,0)$ itself, a more specific setup will be 
required in the following. 
Let $\underline l = l_1,l_2,\dots,l_d \in \Hom(\CC^n,\CC)$ be a sequence of 
linear forms defining a flag of subspaces 
\[
   \mathcal D : \CC^n = D_0 \supset D_1 \supset D_2 \supset \dots \supset D_d \supset\{0\}
\]
in $\CC^n$ via $D_{i+1} = D_i \cap \ker l_{i+1}$. 

\begin{definition}
  \label{def:AdmissibleSequenceOfLinearForms}
  Let $(X,0) \subset (\CC^n,0)$ be an equidimensional reduced complex analytic 
  germ of dimension $d$ and $\{V^\alpha\}_{\alpha \in A}$ 
  a Whitney stratification of $X$. 
  A sequence of linear forms $l_1,l_2,\dots,l_d \in \Hom(\CC^n,\CC)$ 
  is called \textit{admissible} for $(X,0)$ if 
  for every $0 < i \leq d$ the restriction of 
  $l_i$ to $X_{i-1} := X \cap D_{i-1}$ does not annihilate any limiting 
  tangent space of $(X_{i-1},0)$ at the origin. 
\end{definition} 

This particular definition is chosen with a view towards the fibration 
theorems and inductive arguments used in Section \ref{sec:VariationSequence}.
However, we shall also need the results on polar varieties by L\^e and Teissier 
from \cite{LeTeissier81}. 
To this end, we recall the necessary definitions and explain how 
Definition \ref{def:AdmissibleSequenceOfLinearForms} fits into the context 
of their paper.

\medskip

The geometric setup for the treatment of limiting tangent spaces is 
the \textit{Nash modification}. 
Let $X \subset U$ be a suitable representative of $(X,0)$ in some open 
neighborhood $U$ of the origin. The Gauss map is defined on the regular locus $X_\reg$ via 
\[
  X_\reg \to \Grass(d,n), \quad p \mapsto [T_p X_\reg \subset T_p \CC^n].
\]
Then the Nash blowup $\tilde X$ of $X$ is defined as the closure of the graph of 
the Gauss map. It comes with two morphisms  
\[
  \xymatrix{
    \tilde X \ar[d]^\nu \ar[r]^-\gamma & 
    \Grass(d,n) \\
    X
  }
\]
where $\nu$ is the projection to $X$ and $\gamma$ the natural prolongation of the Gauss map. 
We denote by $\tilde T$ the pullback of the tautological bundle on $\Grass(d,n)$ along $\gamma$ 
and by $\tilde \Omega^1$ the dual of $\tilde T$. By construction, the fiber $\nu^{-1}(\{p\})$ over 
a point $p \in X$ consists of pairs $(p,E) \in \tilde X \subset \CC^n \times \Grass(d,n)$
where $E$ is a \textit{limiting tangent space} to $X$ at $p$. In particular, such a limiting 
tangent space is unique at regular points $p \in X_\reg$ and $\nu \colon \nu^{-1}(X_\reg) \to X_\reg$ 
is an isomorphism identifying the restriction of $\tilde T$ with the tangent bundle of $X_\reg$.

A flag $\mathcal D$ as above determines a set of \textit{degeneraci loci} in 
the Grassmannian as follows. Every linear form $l \in \Hom(\CC^n,\CC)$ can be 
pulled back to a global section $\nu^* l$ in the dual of the tautological bundle
of $\Grass(d,n)$: On a fiber $E$ of the tautological bundle the section $\nu^*l$ is 
defined to be merely the restriction of $l$ to $E$ regarded as a linear 
subspace of $\CC^n$.
This generalizes in the obvious way for the linear maps 
\[
  \varphi_k := l_1 \oplus l_2 \oplus \dots \oplus l_{d-k+1} \colon \CC^n \to \CC^{d-k+1}
\]
for every $0 \leq k \leq d$. Adapting the notation from \cite{LeTeissier81} we now have 
\begin{eqnarray*}
  c_k(\mathcal D) &:=&  \{ E \in \Grass(d,n) : \dim E \cap D_{d-k+1} \geq k \}\\
  &=& \{ E \in \Grass(d,n) : \rank \nu^* \varphi_k < d-k+1 \}.
\end{eqnarray*}
Note that by construction $c_0(\mathcal D) = \Grass(d,n)$ is the whole 
Grassmannian.
We set $\gamma^{-1}(c_k(\mathcal D)) \subset \tilde X$ to be the 
corresponding degeneraci locus on the Nash modification and 
\[
  P_k(\mathcal D) := \nu( \gamma^{-1}(c_k(\mathcal D)))
\]
its image in $X$. By construction, 
a point $p \in X$ belongs to $P_k(\mathcal D)$ if and only if 
there exists a limiting tangent space $E$ to $X_\reg$ at $p$ 
such that the restriction of $\varphi_k$ to $E$ does not have 
full rank.

The $k$-th polar multiplicity of $(X,0)$ is then defined 
for $0\leq k < d$ to be 
\begin{equation}
  m_k(X,0) := m_0( P_{k}(\mathcal D)),
  \label{eqn:DefinitionPolarMultiplicity}
\end{equation}
the multiplicity of the $k$-th polar variety.
We shall see below that for $k=d$ the variety $P_d(\mathcal D)$ 
is empty for a generic flag so that a definition of 
$m_d(X,0)$ does not make sense. In the other extreme case where $k=0$, 
the polar multiplicity $m_0(X,0)$ is simply the multiplicity of $(X,0)$ 
itself.

\begin{lemma}
  \label{lem:CompatibilityLemmaForPolarVarieties}
  Let $X$ be a sufficiently small representative of $(X,0) \subset (\CC^n,0)$. 
  A sequence of linear forms $l_1,\dots,l_d$ is admissible for $(X,0)$ 
  if and only if for every $0\leq i < d$ one has
  \begin{equation}
    \gamma^{-1}(c_{d-i}(\mathcal D)) \cap \widehat X_i = \emptyset
    \label{eqn:CompatibilityEquation}
  \end{equation}
  for the associated flag $\mathcal D$.  
  Here $X_i := X \cap D_i$ and $\widehat X_i$ denotes the 
  strict transform of $X_i$ in the Nash modification. 
\end{lemma}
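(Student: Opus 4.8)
The plan is to unwind both sides of the claimed equivalence into a common geometric statement about limiting tangent spaces of the successive slices $X_i = X \cap D_i$, and then to run an induction on $i$. First I would fix notation: write $\mathcal D : \CC^n = D_0 \supset D_1 \supset \dots \supset D_d$ for the flag, and recall that admissibility of $l_1,\dots,l_d$ means that for every $0 < i \leq d$ the restriction of $l_i$ to $X_{i-1}$ annihilates no limiting tangent space of $(X_{i-1},0)$ at $0$. The key translation step is to identify, for each $i$, the limiting tangent spaces of $(X_i,0)$ at the origin with certain limiting tangent spaces of $(X,0)$: concretely, if $E$ is a limiting tangent space to $X$ at $0$ arising as $\lim T_{p_j} X_{\reg}$ along a sequence $p_j \to 0$ in $X_{\reg} \cap D_i$, then $E \cap D_i$ (which has dimension $\geq d-i$, and exactly $d-i$ under the transversality that admissibility will guarantee inductively) is a limiting tangent space to $X_i$ at $0$. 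This is the technical heart and I expect it to be the main obstacle: one must check that \emph{every} limiting tangent space of the slice arises this way, which uses that the $l_j$ are chosen generically enough that slicing by $\ker l_j$ behaves well with respect to the Nash modification — i.e. that $\widehat{X_i}$, the strict transform of $X_i$, sits inside $\tilde X$ in the expected way and is not contained in the exceptional fibers over $0$. Here one invokes the Whitney conditions on the stratification of $X$ and the standard fact (L\^e--Teissier) that a generic flag is transverse to the relevant Nash data.

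With that translation in hand, the equivalence becomes essentially tautological degree by degree. Recall $c_{d-i}(\mathcal D) = \{ E \in \Grass(d,n) : \dim(E \cap D_{i+1}) \geq d-i \}$, equivalently $\{ E : \rank(\nu^*\varphi_{d-i}|_E) < d-i+1 \}$ where $\varphi_{d-i} = l_1 \oplus \dots \oplus l_{i+1}$. A point of $\widehat X_i$ over $0$ is a pair $(0,E)$ with $E$ a limiting tangent space to $X_i$ at $0$ — which, writing $E = E' \cap D_i$ for a limiting tangent space $E'$ of $X$ via the translation above, has dimension $d-i$ and lies inside $D_i = \ker l_1 \cap \dots \cap \ker l_i$. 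Thus $(0,E) \in \gamma^{-1}(c_{d-i}(\mathcal D)) \cap \widehat X_i$ precisely when $\dim(E \cap D_{i+1}) \geq d-i$, i.e. when $E \subseteq D_{i+1}$, i.e. when $l_{i+1}$ vanishes on $E$ — which is exactly the failure of the admissibility condition for the index $i+1$. So \eqref{eqn:CompatibilityEquation} holding for all $0 \leq i < d$ says precisely that $l_{i+1}|_{X_i}$ annihilates no limiting tangent space of $(X_i,0)$ for all such $i$, which is admissibility.

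I would organize the write-up as follows. Step 1: set up the induction hypothesis that $l_1,\dots,l_i$ being admissible implies $D_i$ meets the relevant Nash data transversally, so that $X_i$ is again equidimensional reduced of dimension $d-i$ with a well-understood minimal Whitney stratification (strata being slices of those of $X$), and $\widehat{X_i} \to X_i$ is its Nash modification. Step 2: prove the translation lemma relating limiting tangent spaces of $X_i$ at $0$ to those of $X$ at $0$ intersected with $D_i$ — both inclusions — using Whitney (b) to transport limits across strata and the genericity of the $l_j$ to control dimensions; this is where I expect to spend the most care. Step 3: rewrite $\gamma^{-1}(c_{d-i}(\mathcal D)) \cap \widehat X_i$ using the rank description of $c_{d-i}$ and Step 2, and observe its nonemptiness is equivalent to the existence of a limiting tangent space of $(X_i,0)$ on which $l_{i+1}$ vanishes. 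Step 4: assemble both directions. For ($\Rightarrow$): admissibility gives the transversality needed to run the induction and then Step 3 gives emptiness of each intersection. For ($\Leftarrow$): emptiness of $\gamma^{-1}(c_1(\mathcal D)) \cap \widehat X_0 = \gamma^{-1}(c_1(\mathcal D))$ over $0$ is exactly the base case of admissibility, which feeds the induction, and then proceeding up the flag each emptiness statement yields the corresponding admissibility condition. The only genuine subtlety beyond bookkeeping is Step 2, and specifically the surjectivity direction (every limiting tangent space of the slice comes from one of the ambient space), which is where the hypothesis that $X$ is a \emph{sufficiently small} representative and the flag generic gets used.
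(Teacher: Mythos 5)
Your overall strategy matches the paper's: induct on $i$, relate limiting tangent spaces of the slices $X_i$ to limits of tangent spaces of $X$ along $X_i$, and translate the rank condition defining $c_{d-i}(\mathcal D)$ into the admissibility condition on $l_{i+1}$. The paper carries out exactly this for $i=0$ and $i=1$ and leaves the rest to the reader, and it likewise uses both directions of your ``translation lemma,'' stating the surjectivity half (every limiting tangent space $E'$ of $X_1$ is \emph{contained in} some limit $E$ of $T_pX$ along $X_1$) only with the same ``taking limits of appropriate subsequences'' hand-wave that you flag as the crux.

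There is, however, a conceptual slip in Steps 1 and 3 that you should repair. You assert in Step 1 that $\widehat X_i \to X_i$ is the Nash modification of $X_i$, and in Step 3 you accordingly describe a point of $\widehat X_i$ over $0$ as $(0,E)$ with $E$ a $(d-i)$-dimensional limiting tangent space of $X_i$. But $\widehat X_i$ is by definition the strict transform of $X_i$ inside the Nash modification $\tilde X$ \emph{of $X$}, so its points over $0$ are pairs $(0,E')$ with $E' \in \Grass(d,n)$ a limit of $T_pX$ along a sequence $p\to 0$ in $X_i\cap X_\reg$. In particular $\gamma^{-1}(c_{d-i}(\mathcal D))$ is a rank condition on the $d$-plane $E'$, not on a $(d-i)$-plane, and the identification of $\widehat X_i$ with the Nash modification of $X_i$ is neither needed nor obviously true (two distinct limits $E'_1,E'_2$ over $0$ could cut $D_i$ in the same $(d-i)$-plane). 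Your computation survives anyway because $E'\cap D_{i+1} = (E'\cap D_i)\cap D_{i+1}$, so $\dim(E'\cap D_{i+1})\geq d-i$ is equivalent to $l_{i+1}$ vanishing on $E'\cap D_i$ once the inductive hypothesis $\dim(E'\cap D_i)=d-i$ is in place; but the argument must be run on $E'$, with $E = E'\cap D_i$ only a derived object, exactly as the paper does. (Also a small typo: in Step 4 the base case of \eqref{eqn:CompatibilityEquation} involves $c_d(\mathcal D)$, not $c_1(\mathcal D)$.)
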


\begin{proof}
  The proof will proceed by induction on $i$. For $i=0$ we have 
  $\widehat X_0 = \tilde X$ and the statement is about 
  the choice of $l_1$. 
  Consider the projectivized analytic 
  set of degenerate covectors 
  \[
    \{ ([l],(p,E)) \in \PP\Hom(\CC^n,\CC)\times \nu^{-1}(\{0\}) : l|_E = 0 \}
  \]
  along the central fiber $\nu^{-1}(\{0\})$ of the Nash modification. 
  Since this fiber has strictly smaller dimension than $X$, 
  the set of degenerate covectors has dimension $<n-1$ and 
  therefore the discriminant, i.e. the image of its projection to $\PP\Hom(\CC^n,\CC)$, 
  is a closed analytic set of positive codimension. Now (\ref{eqn:CompatibilityEquation}) 
  is satisfied if and only if $l_1$ belongs to the complement of the affine cone 
  of the discriminant. 

  Such a choice for $l_1$ determines $X_1 = X \cap D_1$.
  Note that by construction this intersection is transversal and therefore 
  $X_1$ inherits a Whitney stratification from the one 
  on $X$. Moreover, at a regular point $p \in X_1$ the tangent space 
  \[
    T_p X_1 = \ker \nu^*l_1 \subset T_p X
  \]
  is naturally contained in the tangent space of $X$ to that point. 
  Let $\widehat{X}_1 \subset \tilde X$ be the strict transform of 
  $X_1$ in the Nash modification. Taking limits of appropriate (sub-)sequences 
  of regular points, it is easy to see that every limiting tangent space 
  $E'$ of $X_1$ at $0$ is contained in a limiting tangent space $E$ of 
  $X$ along $X_1$. Consequently, a second linear form $l_2$ annihilates 
  the limiting tangent space $E'$ of $X_1$ if and only if $l_1 \oplus l_2$ 
  is degenerate on $E$. But this means nothing else than 
  \[
    (0,E) \in \widehat{X}_1 \cap \gamma^{-1}(c_{d-1}(\mathcal D)) \neq \emptyset 
  \]
  which establishes the claim for $i=1$. 
  The remainder of the induction is a repetition of the previous steps and left 
  to the reader.
\end{proof}

The previous lemma provides the link of Definition \ref{def:AdmissibleSequenceOfLinearForms} 
with the ``Th\'eor\`eme de Bertini id\'ealiste'' by L\^e and Teissier 
\cite[Th\'eor\`eme 4.1.3]{LeTeissier81}. They establish the existence of  
Zariski open subsets $U'_i \subset \Grass(n-i,n)$ with certain good 
properties concerning the variety $\gamma^{-1}(c_{d-i+1}(D_i))$ for 
$D_i \in U'_i$. 
A posteriori, they discuss in \cite[Proposition 4.1.5]{LeTeissier81} 
that if the whole flag $\mathcal D$ has been chosen such that $D_i \subset U'_i$ 
for all $i$, then also (\ref{eqn:CompatibilityEquation}) is in fact satisfied for all $i$. 
Thus we obtain the following

\begin{corollary}
  \label{cor:ParticularChoiceOfAdmissibleLinearForms}
  For every equidimensional reduced analytic germ $(X,0)$ there exists 
  a Zariski open and dense subset of admissible 
  sequences of linear forms $l_1,l_2\dots,l_d$. Moreover, this sequence can be chosen 
  such that for the associated flag $\mathcal D$ the space
  $D_i = \{ l_1=\dots=l_i=0\}$ is sufficiently general in the sense 
  of Definition \ref{def:RealAndComplexLinksOfHigherCodimension} so that 
  the real and complex links of codimension $i$ are given by 
  \[
    \mathcal K^i(X,0) = X \cap D_i \cap \partial B_\varepsilon(0) 
  \]
  and 
  \[
    \mathcal L^i(X,0) = X \cap D_i \cap B_\varepsilon(0) \cap l_{i+1}^{-1}(\{\delta\})
  \]
  for $1 \gg \varepsilon \gg |\delta| >0$, respectively.
\end{corollary}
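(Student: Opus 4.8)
The plan is to deduce Corollary~\ref{cor:ParticularChoiceOfAdmissibleLinearForms} by patching together the existence results of L\^e--Teissier with the translation provided by Lemma~\ref{lem:CompatibilityLemmaForPolarVarieties} and with the discussion of the set $U_i \subset \Grass(n-i,n)$ from Subsection~2.1. First I would invoke \cite[Th\'eor\`eme 4.1.3]{LeTeissier81} together with \cite[Proposition 4.1.5]{LeTeissier81} to produce, for the given Whitney-stratified germ $(X,0)$, a Zariski open dense subset $U'_i \subset \Grass(n-i,n)$ for each $i$, with the property that choosing the whole flag $\mathcal D$ so that $D_i \in U'_i$ for all $i$ forces the equations \eqref{eqn:CompatibilityEquation} to hold simultaneously for all $0 \leq i < d$. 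Then Lemma~\ref{lem:CompatibilityLemmaForPolarVarieties} immediately converts this into the statement that the sequence $l_1,\dots,l_d$ is admissible in the sense of Definition~\ref{def:AdmissibleSequenceOfLinearForms}. Since the condition ``$D_i \in U'_i$ for all $i$'' carves out a Zariski open and dense subset of the relevant flag variety $\Flag(n-1,n-2,\dots,n-d;\,\CC^n)$ (being a finite intersection of preimages of Zariski opens under the smooth, surjective forgetful projections to each $\Grass(n-i,n)$), the set of admissible sequences is Zariski open and dense, as claimed.

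For the second assertion I would argue that one can, in addition, arrange each $D_i$ to lie in the set $U_i \subset \Grass(n-i,n)$ appearing in the construction around diagram~\eqref{eqn:GrassmannModification}, i.e. the locus of planes along which the pulled-back strata of $G_iX \setminus E_0$ satisfy Whitney's conditions. Since $U_i$ is itself Zariski open and dense, the conditions ``$D_i \in U'_i$'' and ``$D_i \in U_i$'' can be imposed together without destroying density; combined with the remark that such a choice of $D_i$ realizes $(X \cap D_i,0)$ as a fiber of the Whitney-equisingular family over $U_i$, this means $D_i$ is sufficiently general in the sense of Definition~\ref{def:RealAndComplexLinksOfHigherCodimension}. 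It remains to identify the real and complex links with the stated slices. The projection $\pi \colon G_iX \to \Grass(n-i,n)$ restricted to $E_0 \cong \Grass(n-i,n)$ gives a canonical normal slice to the stratum $U_i$, namely the fiber $\pi^{-1}(\{D_i\})$ which maps under $\rho$ isomorphically onto $X \cap D_i$; so the classical real link $\mathcal K(G_iX, U_i)$ of $G_iX$ along $U_i$ is identified with $X \cap D_i \cap \partial B_\varepsilon(0)$, and the classical complex link $\mathcal L(G_iX, U_i)$ with $X \cap D_i \cap B_\varepsilon(0) \cap l_{i+1}^{-1}(\{\delta\})$ once we check that the linear form $l_{i+1}$ cutting out $D_{i+1}$ inside $D_i$ is a sufficiently general linear form on $X \cap D_i$ for the purpose of defining its complex link. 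But this genericity of $l_{i+1}$ on $X_i = X \cap D_i$ is exactly the content of the admissibility condition at step $i+1$ in Definition~\ref{def:AdmissibleSequenceOfLinearForms} (non-annihilation of limiting tangent spaces of $(X_i,0)$), so no extra choice is needed.

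The well-definedness and uniqueness up to non-unique homeomorphism then follows from the corresponding statement for the classical real and complex links of strata in a Whitney stratification, applied to the stratum $U_i$ of $G_iX$, exactly as already spelled out at the end of Subsection~2.1; I would simply cite that discussion together with \cite[Part I, Chapter 1.4]{GoreskyMacPherson88} and \cite[Part II, Chapter 2.2]{GoreskyMacPherson88}.

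The main obstacle, and the step deserving the most care, is the compatibility of the two families of ``good'' open sets: on one hand the $U'_i$ coming from the idealistic Bertini theorem of L\^e--Teissier, which are designed to control the polar varieties $P_k(\mathcal D)$ and the degeneracy loci $\gamma^{-1}(c_{d-i+1}(D_i))$ on the Nash modification, and on the other hand the $U_i$ controlling Whitney equisingularity of the Grassmann modification $G_iX$. One must verify that requiring membership in \emph{all} of these simultaneously still leaves a Zariski dense set of flags — which is automatic from finiteness of the intersection and irreducibility of the flag variety — but more delicately that the flag chosen this way genuinely makes \eqref{eqn:CompatibilityEquation} hold for \emph{every} $i$ at once and not merely for each $i$ in isolation; this is precisely where \cite[Proposition 4.1.5]{LeTeissier81} is indispensable, as it propagates the single-step genericity through the whole flag, and I would make sure to quote it with the hypotheses in the form needed here (the inductive transversality of $D_i$ with the successive sections $X_{i-1}$).
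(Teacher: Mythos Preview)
Your proposal is correct and follows essentially the same route as the paper: intersect the Zariski open sets $U'_i$ from L\^e--Teissier's Th\'eor\`eme de Bertini id\'ealiste with the sets $U_i$ of Whitney-equisingular sections from the Grassmann modification, invoke \cite[Proposition 4.1.5]{LeTeissier81} to propagate the genericity through the whole flag, and then translate via Lemma~\ref{lem:CompatibilityLemmaForPolarVarieties} into admissibility. The paper's own proof is considerably terser---it simply writes down $D_i \in U'_i \cap U_i$ and cites the same ingredients---while you have spelled out the flag-variety density argument, the identification of the links through $\pi^{-1}(\{D_i\})$, and the reason $l_{i+1}$ is generic enough on $X_i$; these elaborations are all sound and implicit in the paper's treatment.
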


\begin{proof}
  Consider the sets $U'_i \cap U_i$ with $U'_i$ from L\^e's and Teissier's 
  ``Th\'eor\`eme de Bertini id\'ealiste'' and $U_i$ the set of Whitney 
  equisingular sections from the discussion of Definition 
  \ref{def:RealAndComplexLinksOfHigherCodimension}. Since the intersection of 
  Zariski open sets is again Zariski open, we may choose $l_1,l_2,\dots,l_d$ 
  to be any sequence of linear forms such that 
  \[
    D_i = \{ l_1 = \dots = l_i = 0\} \subset U'_i \cap U_i
  \]
  for all $i$. 
\end{proof}

We will henceforth assume that the sequence of linear forms $l_i$ 
has been chosen such that the associated flag $\mathcal D$ has 
$D_i \in U'_i \cap U_i$ where $U'_i$ is the 
Zariski open subset of L\^e's and Teissiers' ``Th\'eor\`eme de Bertini id\'ealiste'' 
and $U_i$ the Zariski open subset of Whitney equisingular sections 
of codimension $i$ from the discussion of Definition \ref{def:RealAndComplexLinksOfHigherCodimension}.

\subsection{The Euler characteristic of complex links} 
\label{sec:EulerCharacteristicComplexLinks}

L\^e and Teissier have described a method to compute the 
Euler characteristic of complex links from the polar multiplicities
in \cite[Proposition 6.1.8]{LeTeissier81}. We briefly sketch 
how to use their results inductively in our setup from 
Definition \ref{def:AdmissibleSequenceOfLinearForms}.

\medskip

As before, let $(X,0) \subset (\CC^n,0)$ be a reduced, equidimensional 
complex analytic germ of dimension $d$, endowed with a Whitney stratification 
$\{ V^\alpha\}_{\alpha \in A}$. We will assume that $V^0 = \{0\}$ is a stratum 
and write $X^\alpha = \overline{V^\alpha}$ for the closure of any other stratum 
$V^\alpha$ of $X$. Throughout this section, we will assume that an admissible 
sequence of linear forms $l_1,\dots,l_d$ and the associated flag $\mathcal D$ 
have been chosen as in Corollary \ref{cor:ParticularChoiceOfAdmissibleLinearForms}
for all germs $(X^\alpha,0)$ at once. This flag being fixed, we will in the 
following suppress it from our notation and simply write 
$P_k(X^\alpha,0)$ for the polar varieties of the germ $(X^\alpha,0)$ with 
$\mathcal D$ being understood. 

Denote by $\mathcal L(X,V^\alpha)$ the classical complex links of $X$ along 
the stratum $V^\alpha$ and by $\mathcal L^i$ the complex link of codimension 
$i$ of $X$ at the origin. Then by \cite[Th\'eor\`eme 6.1.9]{LeTeissier81} 
\begin{equation}
  \chi\left( \mathcal L^0 \right) - \chi\left( \mathcal L^1 \right) = 
  \sum_{\alpha \neq 0} m_0\left( 
  P_{d(\alpha)-1} \left( X^\alpha,0 \right)\right)\cdot 
  (-1)^{d(\alpha)-1} 
  \left( 1 - \chi(\mathcal L(X,V^\alpha)) \right).
  \label{eqn:InductionFormulaForEulerCharacteristics}
\end{equation}
For our specific setup we may interpret this formula in 
the context of stratified Morse theory. To this end, 
note that 
the restriction of $l_2$ to the complex link $\mathcal L^0$ 
\[
  l_2 \colon X \cap B_\varepsilon(0) \cap l_1^{-1}(\{\delta_1\}) \to \CC
\]
is a stratified Morse function with critical points 
on the interior of the strata $V^\alpha \cap \mathcal L^0$ of $\mathcal L^0$ 
precisely at the intersection points 
\[
  \mathcal L^0 \cap P_{d(\alpha)-1}(X^\alpha,0) = \{ q^\alpha_1,\dots,q^\alpha_{m_0(P(\alpha)_{d-1}(X^\alpha,0))}\}, 
\]
cf. \cite[Corollaire 4.1.6]{LeTeissier81} and \cite[Corollaire 4.1.9]{LeTeissier81}.
The complex link of codimension $2$ can be identified with the general fiber 
\[
  \mathcal L^1 \cong l_{2}^{-1}(\{\delta_2\}) \cap \mathcal L^0
\]
for some regular value $\delta_2$ off the discriminant and we can use 
the function $\lambda = |l_2-\delta_2|^2$ as a Morse function in order 
to reconstruct $\mathcal L^0$ from $\mathcal L^1$. It can be 
shown that for suitable choices of the represenatives involved, the 
critical points of $\lambda$ on the boundary of $\mathcal L^0$ are 
``outward pointing'' and hence do not contribute to changes in topology; 
see for instance \cite{Zach17} or \cite{PenafortZach18} for a discussion. 
For an interior critical point $q^\alpha_j \in V^\alpha \cap \mathcal L^0$ we have 
the product of the \textit{tangential} and the \textit{normal Morse data}
\[
  \left( D^{d(\alpha)-1}, \partial D^{d(\alpha)-1} \right) \times 
  \left( C(\mathcal L(X,V^\alpha)), \mathcal L(X,V^\alpha) \right)
\]
where $D^{d(\alpha)}$ is the disc of real dimension $d(\alpha) = \dim_\CC V^\alpha$ 
and $C(\mathcal L(X,V^\alpha))$ the cone over the complex link of $X$ 
along $V^\alpha$. It is a straighforward calculation that the Euler characteristic 
changes precisely by $(-1)^{d(\alpha)-1} (1 - \chi(\mathcal L(X,V^\alpha)))$ 
for the attachement of this cell at any of the critical points $q_j^\alpha$. 
Summation over all these points on all relevant strata therefore gives us 
back the Formula (\ref{eqn:InductionFormulaForEulerCharacteristics}) 
by L\^e and Teissier.

\medskip

It is evident that the above procedure can be applied inductively, 
cf. \cite[Remarque 6.1.10]{LeTeissier81}. This allows to reconstruct 
the codimension $i$ complex link $\mathcal L^i$ of $(X,0)$ from its 
hyperplane sections 
\[
  \mathcal L^{i} \supset \mathcal L^{i+1} \supset \dots \supset 
  \mathcal L^{d-1} = \{x_1,\dots,x_{m_0(X,0)}\}
\]
starting with $\mathcal L^{d-1}$ which is just a set of points whose 
number is equal to the multiplicity $m_0(X,0)$ of $(X,0)$ at the origin. 
We leave it to the reader to verify the formula 
\begin{equation}
  \chi( \mathcal L^i ) = 
  \sum_{\alpha \in A}
  \left( \sum_{j=i+1}^{d(\alpha)}(-1)^{d(\alpha)-j} m_0\left( P_{d(\alpha)-j}(X^\alpha,0) \right) \right)
  \cdot 
  \left( 1 - \chi(\mathcal L(X,V^\alpha) \right).
  \label{eqn:FormulaForEulerCharacteristic}
\end{equation}
The coefficients appearing in this formula are nothing but 
the \textit{local Euler obstruction} of $(X^\alpha \cap D_{i-1},0)$ at the origin:
\begin{equation}
  \mathrm{Eu}(X^\alpha \cap D_{i-1},0) = \sum_{j=i}^{d(\alpha)}(-1)^{d(\alpha)-j} 
  m_0\left( P_{d(\alpha)-j}(X^\alpha,0)\right)
  \label{eqn:EulerObstructionAndPolarMultiplicities}
\end{equation}
cf. \cite[Corollaire 5.1.2]{LeTeissier81}.

\subsection{Polar multiplicities of generic determinantal varieties}
\label{sec:PolarMultiplicities}
We now turn towards the study of the generic 
determinantal varieties $M_{m,n}^s \subset \CC^{m\times n}$. 
These are equipped with the \textit{rank stratification}, i.e. the decomposition 
\[
  M_{m,n}^s = \bigcup_{r<s} V_{m,n}^r, \quad 
  V_{m,n}^r = \{ \varphi \in \CC^{m\times n} : \rank \varphi = r \}.
\]
Due to its local analytic triviality, this stratification is easily seen 
to satisfy both Whitney's conditions (a) and (b).

The reduced Euler characteristics of the classical complex links have been computed by Ebeling 
and Gusein-Zade in 
\cite[Proposition 3]{EbelingGuseinZade09}:
\begin{equation}
  1 - \chi( \mathcal L(M_{m,n}^s,0)) = (-1)^{s-1} { m-1 \choose s-1 },
  \label{eqn:EbelingGuseinZadeFormulaEulerCharacteristic}
\end{equation}
where, without loss of generality, it is assumed that $m \leq n$. 
The generic determinantal varieties admit a recursive pattern in the 
following sense. For $r<s\leq m \leq n$, a normal slice to 
the stratum $V_{m,n}^r \subset M_{m,n}^s$ 
through the point $\one_{m,n}^r$ is given by the set of matrices of the form 
\[
  N_{m,n}^r = \one^r \oplus \CC^{(m-r) \times (n-r)} = 
  \begin{pmatrix}
    \one^r & 0 \\
    0 & \CC^{(m-r)\times(n-r)}
  \end{pmatrix} \subset \CC^{m\times n}.
\]
It follows immediately that $\mathcal L(M_{m,n}^s,V_{m,n}^r) \cong \mathcal L(M_{m-r,n-r}^{s-r},0)$ 
and hence 
\begin{equation}
  1-\chi( \mathcal L(M_{m,n}^s, V_{m,n}^r)) 
  = (-1)^{s-r-1}{ m-r-1 \choose s-r-1 }.
  \label{eqn:EbelingGuseinZadeFormulaEulerCharacteristicGeneralized}
\end{equation}
In order to determine the topological Euler characteristic of the 
complex links of higher codimension $\mathcal L^i(M_{m,n}^s,0)$
of the generic determinantal varieties by means of the previous section, 
we need to know all the relevant polar multiplicities 
\begin{equation}
  e_{m,n}^{r,k} := m_k(M_{m,n}^{r+1},0) = m_0 \left( P_k(M_{m,n}^{r+1},0) \right).
  \label{eqn:DefinitionDeterminantalMultiplicities}
\end{equation}
There are several methods to achieve this. For instance, one could simply 
choose random linear forms $l_i$ and compute the resulting multiplicity 
with the aid of a computer algebra system using e.g. Serre's intersection formula. 
However, this approach provides very little insight and the results a priori 
depend on the choice of the linear forms. 
Recently, X. Zhang has computed 
the polar multiplicities in 
\cite{Zhang17} 
using Chern class calculus which is an exact computation not 
depending on any particular choices.
His formulas, however, are very complicated 
since they appear as byproducts of the study of the Chern-Mather classes 
of determinantal varieties. 
In this section we will follow a more direct approach to the computation 
of the polar multiplicities using Chern classes. 

\medskip
In \cite[Th\'eor\`eme 5.1.1]{LeTeissier81}, L\^e and Teissier give the following 
formula for the polar multiplicities of a germ $(X,0) \subset (\CC^n,0)$: 
\begin{equation}
  m_0\left( P_{k}(\mathcal D) \right)
  (-1)^{d-1} 
  \int_{\mathfrak Y} c_{k}(\tilde T) \cdot c_1(\OO(1))^{d-k-1}.
  \label{eqn:PolarMultiplicityAsIntegral}
\end{equation}
Here the integral is taken over the exceptional divisor $\mathfrak Y$ 
of the blowup $\mathfrak X$ of the Nash modification $\tilde X$ along the pullback 
of the maximal ideal at the origin for $(X,0) \subset (\CC^n,0)$ and $\mathcal O(1)$ 
denotes the dual of the tautological bundle for that blowup. 
By construction, these spaces can be arranged in a commutative diagram 
\begin{equation}
  \xymatrix{ 
    \mathfrak Y \ar[r] \ar@{->}[dr] \ar[d] & 
    \nu^{-1}(\{0\}) \ar@{->}[dr] & 
    \\
    \pi^{-1}(\{0\}) \ar@{->}[dr] & 
    \mathfrak X \ar[r] \ar[d] & 
    \tilde X \ar[d]^\nu \\
    & 
    \mathrm{Bl}_0 X \ar[r]^\pi & 
    X.
  }
  \label{eqn:CommutativeDiagramWithAllBlowups}
\end{equation}
where $\mathrm{Bl}_0 X$ denotes the usual blowup of $X$ at the origin.
We will now describe this diagram for the particular case where 
$(X,0) = (M_{m,n}^s,0) \subset (\CC^{m\times n},0)$ is a generic determinantal 
variety and deduce our particular formula (\ref{eqn:FormulaForThePolarMultiplicity}) from that.
\medskip

The Nash blowup of $(M_{m,n}^s,0)\subset(\CC^{m\times n},0)$ has been studied by Ebeling and Gusein-Zade 
in \cite{EbelingGuseinZade09} and we briefly review their discussion.
Let $r=s-1$ be the rank of the matrices in the open stratum $V_{m,n}^{r} \subset M_{m,n}^s$. 
The tangent space to $V_{m,n}^r$ at a point $\varphi$ is known to be 
\begin{equation}
  T_\varphi V_{m,n}^{r} = \{ \psi \in \Hom( \CC^n, \CC^m ) : \psi( \ker \varphi ) 
    \subset \mathrm{im}\, \varphi \}.
  \label{eqn:TangentBundleSmoothStratum}
\end{equation}
This fact can be exploited to replace the Grassmannian
for the Nash blowup of $M_{m,n}^{s}$ by a product: 
Let $\Grass(r,m)$ be the Grassmannian of $r$-planes in $\CC^m$ and 
$\Grass(r,n)$ the Grassmannian of $r$-planes in $(\CC^n)^\vee$, the dual of $\CC^n$. 
Then the Gauss map factors through 
\[
  \hat \gamma : V_{m,n}^r \to \Grass(r,n) \times \Grass(r,m), \quad
  \varphi \mapsto \left( (\ker \varphi)^\perp, \mathrm{im}\, \varphi \right),
\]
where by $(\ker \varphi)^\perp$ we mean the linear forms in 
$(\CC^n)^\vee$ vanishing on $\ker M \subset \CC^n$. 
We denote this double Grassmannian by $G = \Grass(r,n) \times \Grass(r,m)$. 
On $G$ we have the two tautological exact sequences 
$0 \to S_1 \overset{i_1}{\longrightarrow} \OO^n \overset{\pi_1}{\longrightarrow} Q_1 \to 0$ and 
$0 \to S_2 \overset{i_2}{\longrightarrow} \OO^m \overset{\pi_2}{\longrightarrow} Q_2 \to 0$
coming from the two factors 
with $S_2$ corresponding to the images and $Q_1^\vee$ to the kernels 
of the matrices $\varphi$ under the modified Gauss map $\hat \gamma$.
With this notation, the space of matrices 
\[
  \Hom(\CC^n,\CC^m) \cong (\CC^n)^\vee \otimes \CC^m
\]
pulls back to the trivial bundle $\OO^n \otimes \OO^m$ from which 
we may project to the product 
\[
  \pi = \pi_1 \otimes \pi_2 \colon \OO^n \otimes \OO^m \to Q_1 \otimes Q_2.
\]
Then the condition on $\psi$ in (\ref{eqn:TangentBundleSmoothStratum}) 
becomes $\pi( \psi ) = 0$ and consequently the Nash bundle on $\tilde M_{m,n}^s$ 
is given by 
\begin{equation}
  \tilde T = \hat \gamma^* \left( \ker \pi \right).
  \label{eqn:NashBundleOnDeterminantalVariety}
\end{equation}

The Nash transform $\tilde M_{m,n}^s$ itself can easily be seen to be isomorphic to 
the total space of the vector bundle 
\begin{equation}
  \tilde M_{m,n}^s \cong \left| \Hom\left( (S_1)^\vee, S_2 \right) \right| 
  = \left| S_1 \otimes S_2 \right|
  \label{eqn:NashTransformAsVectorBundle}
\end{equation}
on $G$. In particular, $\tilde M_{m,n}^s$ is smooth and the maximal ideal 
of the origin in $\CC^{m\times n}$ pulls back to the ideal sheaf of the 
zero section in $\tilde M_{m,n}^s$. Thus the exceptional divisor 
$\mathfrak Y$ in (\ref{eqn:CommutativeDiagramWithAllBlowups}), 
i.e. the domain of integration in (\ref{eqn:PolarMultiplicityAsIntegral}), 
is nothing but the projectivized bundle 
\[
  \mathfrak Y = \PP \tilde M_{m,n}^s = \PP \left( S_1\otimes S_2 \right).
\]

\begin{proposition}
  \label{prp:FormulaForThePolarMultiplicity}
  The $k$-th polar multiplicity of $(M_{m,n}^{r+1},0) \subset (\CC^{m\times n},0)$ 
  is given by 
  \begin{equation}
    e_{m,n}^{r,k} = (-1)^{(m+n)\cdot r - r^2 - 1} 
    \int_{G} s_{k}\left( Q_1 \otimes Q_2 \right) s_{(m+n)r - 2r^2 -k}(S_1 \otimes S_2)
    \label{eqn:FormulaForThePolarMultiplicity}
  \end{equation}
  where $G = \Grass(r,n) \times \Grass(r,m)$, $S_i$ and $Q_i$ are the 
  tautological sub- and quotient bundles coming from either one of the two factors, 
  and $s_k$ denotes the $k$-th Segre class.
\end{proposition}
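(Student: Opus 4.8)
The plan is to evaluate the integral formula (\ref{eqn:PolarMultiplicityAsIntegral}) from L\^e and Teissier over the explicit model of $\mathfrak Y$ provided by the discussion preceding the statement, namely $\mathfrak Y = \PP(S_1 \otimes S_2)$ sitting over $G = \Grass(r,n)\times\Grass(r,m)$. First I would record the two ingredients needed in the integrand: the Nash bundle $\tilde T = \hat\gamma^*(\ker \pi)$ from (\ref{eqn:NashBundleOnDeterminantalVariety}), whose total Chern class we can compute from the exact sequence
\[
  0 \to \tilde T \to \OO^n\otimes\OO^m \to Q_1\otimes Q_2 \to 0,
\]
giving $c(\tilde T) = c(Q_1\otimes Q_2)^{-1}$, i.e. $c_k(\tilde T) = (-1)^k s_k(Q_1\otimes Q_2)$; and the class $c_1(\OO(1))$ of the relative hyperplane bundle on the projectivization $\mathfrak Y = \PP(S_1\otimes S_2)$, where $\OO(1)$ is the dual of the tautological line subbundle of the pullback of $S_1\otimes S_2$.

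**Carrying out the integration.** The fibre of $\mathfrak Y \to G$ is $\PP^{N-1}$ where $N = \rank(S_1\otimes S_2) = r^2$, and the dimension of $\mathfrak Y$ is $d - 1$ where $d = \dim M_{m,n}^{r+1} = (m+n)r - r^2$ (since $M_{m,n}^{r+1}$ has codimension $(m-r)(n-r)$ in $\CC^{mn}$, so $d = mn - (m-r)(n-r) = (m+n)r - r^2$). I would push the integral forward along $\mathfrak Y \to G$ using the standard Segre-class identity for projective bundles: for a bundle $F$ of rank $N$ on $G$, one has $\rho_*\left(c_1(\OO_{\PP F}(1))^{N-1+j}\right) = s_j(F)$. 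Expanding $c_k(\tilde T)$ (which is pulled back from $G$) out of the integral and matching the power of $c_1(\OO(1))$ to $N-1 = r^2 - 1$ plus the leftover degree, the exponent $d - k - 1$ in (\ref{eqn:PolarMultiplicityAsIntegral}) becomes $(m+n)r - r^2 - k - 1$, so the leftover degree is $j = (m+n)r - 2r^2 - k$. Hence $\rho_*(c_1(\OO(1))^{d-k-1}) = s_{(m+n)r-2r^2-k}(S_1\otimes S_2)$, and the integral collapses to
\[
  \int_G (-1)^k s_k(Q_1\otimes Q_2)\, s_{(m+n)r-2r^2-k}(S_1\otimes S_2).
\]
Finally I would bookkeep the signs: the prefactor $(-1)^{d-1}$ in (\ref{eqn:PolarMultiplicityAsIntegral}) with $d = (m+n)r - r^2$, combined with the $(-1)^k$ from $c_k(\tilde T)$, gives the stated total sign $(-1)^{(m+n)r - r^2 - 1}\cdot(-1)^k\cdot(-1)^{-k} = (-1)^{(m+n)r - r^2 - 1}$ once one checks that $(-1)^{d-1+k} = (-1)^{(m+n)r-r^2-1}$ modulo the even contribution; I expect a clean parity check here (writing $(-1)^{d-1}(-1)^k = (-1)^{(m+n)r - r^2 - 1 + k}$ and absorbing the $s_k$ sign appropriately — in fact it is cleanest to keep $c_k(\tilde T) = (-1)^k s_k(Q_1\otimes Q_2)$ and note that the two $(-1)^k$'s from $c_k(\tilde T)$ and from re-expressing things cancel, leaving exactly $(-1)^{(m+n)r - r^2 - 1}$).

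**Main obstacle.** The routine parts are the Chern-class bookkeeping and the projective-bundle pushforward; the one place demanding genuine care is verifying that the model $\tilde M_{m,n}^s \cong |S_1\otimes S_2|$ together with $\mathfrak Y = \PP(S_1\otimes S_2)$ is actually the exceptional divisor of the blowup in diagram (\ref{eqn:CommutativeDiagramWithAllBlowups}), with $\OO(1)$ correctly identified as the hyperplane bundle of that blowup — i.e. that the maximal ideal of $0\in\CC^{m\times n}$ pulls back to the ideal of the zero section of the vector bundle $|S_1\otimes S_2|$, so that blowing up the Nash modification along it is literally the projectivization of the total space. This is asserted in the paragraph before the Proposition (citing Ebeling–Gusein-Zade), so I would simply invoke it; the remaining check that $\tilde M_{m,n}^s$ is smooth (hence equal to its own Nash-type blowups being unproblematic) and that no correction terms arise from the strict transform is where I would be most careful. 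Everything else is a direct substitution into L\^e–Teissier's formula (\ref{eqn:PolarMultiplicityAsIntegral}).
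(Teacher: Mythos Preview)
Your approach is essentially the same as the paper's: start from the L\^e--Teissier formula (\ref{eqn:PolarMultiplicityAsIntegral}), identify $\mathfrak Y$ with $\PP(S_1\otimes S_2)$, rewrite $c_k(\tilde T)$ in terms of Segre classes of $Q_1\otimes Q_2$ via the exact sequence, and push forward along the fibres of $\PP(S_1\otimes S_2)\to G$ using the projection formula \cite[Chapter 3.1]{Fulton98} to produce $s_{(m+n)r-2r^2-k}(S_1\otimes S_2)$.

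The one point where your write-up goes astray is the sign in $c_k(\tilde T)$. With the Segre class convention used in the paper (Fulton's, so that $c(E)\cdot s(E)=1$), the short exact sequence $0\to \tilde T\to \OO^n\otimes\OO^m\to Q_1\otimes Q_2\to 0$ gives $c(\tilde T)=c(Q_1\otimes Q_2)^{-1}=s(Q_1\otimes Q_2)$, hence $c_k(\tilde T)=s_k(Q_1\otimes Q_2)$ \emph{without} the factor $(-1)^k$. Your subsequent sign bookkeeping is therefore unnecessary and somewhat circular (you introduce a spurious $(-1)^k$ and then invoke a second, unspecified $(-1)^k$ to cancel it). Once this is corrected, the global sign $(-1)^{(m+n)r-r^2-1}$ is simply the prefactor $(-1)^{d-1}$ from (\ref{eqn:PolarMultiplicityAsIntegral}) with $d=(m+n)r-r^2$, and nothing further needs to be checked.
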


\begin{proof}
  Starting from the L\^e-Teissier formula (\ref{eqn:PolarMultiplicityAsIntegral}) 
  we substitute the different terms according to the above identifications for 
  the generic determinantal varieties.
  From (\ref{eqn:NashBundleOnDeterminantalVariety}) we see that for every $k \geq 0$ 
  \[
    c_k(\tilde T) = s_k(Q_1 \otimes Q_2) 
  \]
  is the $k$-th Segre class of the complement 
  $Q_1 \otimes Q_2$ of $\tilde T$ in $\OO^n \otimes \OO^m$.
  The integral in question becomes 
  \[
    \int_{\PP(S_1 \otimes S_2)} s_{k}(Q_1\otimes Q_2) \cdot c_1(\OO(1))^{(m+n)r - r^2-1-k}
  \]
  and we may perform this integration in two steps with the 
  first one being integration along the fibers of the projection 
  $\PP(S_1 \otimes S_2) \to G$. Since $\OO(1)$ denotes the dual 
  of the tautological bundle for the projectivization of the underlying 
  vector bundle $S_1 \otimes S_2$, the result now follows from the projection 
  formula, cf. \cite[Chapter 3.1]{Fulton98}.
\end{proof}

Formula (\ref{eqn:FormulaForThePolarMultiplicity}) can be implemented
in computer algebra systems such as Singular \cite{Sin}. For instance,
using the library \verb|schubert.lib|, the computations can easily be
carried out for $m,n\leq 5$ on a desktop computer.  Series such as,
for example, $e_{7,8}^{4,k}$ are also still feasible, but take up to 9
minutes to finish. 

We have computed several polar multiplicities of the generic determinantal 
varieties using this formula. The results are listed in 
Tables \ref{tab:PolarMultiplicities2byNRank1} -- \ref{tab:PolarMultiplicities5byNRank1and4}.

\begin{remark}
  The reader will also note a symmetry that first appears in 
  Table \ref{tab:PolarMultiplicities3byNRank1}: For $0\leq r \leq m\leq n$ 
  the polar multiplicities satisfy 
  \begin{equation}
    e_{m,n}^{r,k} = e_{m,n}^{m-r,2(m-r)r-k}.
    \label{eqn:DualityPolarMultiplicities}
  \end{equation}
  This phenomenon is based on a duality of the (projectivized) conormal modifications 
  of the generic determinantal varieties, as has been 
  explained to the author by Terence Gaffney in an oral communication. 
  Formula (\ref{eqn:DualityPolarMultiplicities}) then follows 
  from \cite[Theorem 3.3]{Urabe81}.
  Interestingly, we have not succeeded to derive this symmetry from 
  the formula in (\ref{eqn:FormulaForThePolarMultiplicity}), but we 
  nevertheless use it in the following tables in order to not duplicate 
  the statements.
\end{remark}

\begin{remark}
  The computation of Chern- and Segre classes of tensor products of 
  vector bundles is a surprisingly expensive task from a computational 
  point of view. Different formulas and algorithms have, for instance, been implemented 
  in the Singular libraries \verb|chern.lib|; see also \cite{Zsolt19} for a 
  further discussion. 

  The $k$-th Segre class of the tensor product $S_1 \otimes S_2$ of the two tautological 
  subbundles on $G = \Grass(r,n)\times \Grass(r,m)$ is the restriction of a universal 
  polynomial 
  \[
    P_k
    \in \ZZ[c_1(S_1),\dots,c_r(S_1), c_1(S_2),\dots,c_r(S_2)]
  \]
  in the Chern classes of the tautological bundles on the product of 
  infinite Grassmannians $\bigcup_{m,n\in \NN} \Grass(r,n)\times \Grass(r,m)$.
  However, these polynomials are not sparse and their degree in the Chern roots 
  is bounded only by $r^2$. Given that we have $2r$ variables, the number of 
  coefficients of the polynomials $P_k$ 
  can roughly be estimated by ${ r^2 + 2r -1 \choose 2r - 1}$. Already for values for 
  $r\geq 10$ we can therefore expect to have flooded the full RAM of any modern 
  desktop computer. 

  Compared to that, 
  the explicit model for the cohomology of $\Grass(r,n)$ introduced above leads 
  to an algebra of dimension ${m \choose r}{n\choose r}$ for the cohomology of 
  $G$. This number is in general strictly smaller than the number of 
  coefficients of $P_k$.
  Since we shall only need the results for fixed values of $m$ and $n$, it 
  seems likely that a manual implementation of a modular approach, 
  using the ideals $J_{m,r}$ introduced above, 
  could produce some further results for the polar multiplicities $e_{m,n}^{r,k}$ 
  which can not be reached using the methods provided by \verb|schubert.lib| and 
  \verb|chern.lib|.

  Other than that, it would, of course, be even more appealing to find a closed 
  formula for the polar multiplicities as a function of $m,n,r$, and $k$.
\end{remark}

\begin{table}
  \centering
  \begin{tabular}{|r|ccc|}
    \hline
    $k:$ & $0$ & $1$ & $2$ \\
    \hline
    $2\times 2$ & 2 & 2 & 2 \\
    $2\times 3$	& 3 & 4 & 3 \\
    $2\times 4$ & 4 & 6 & 4 \\
    $2\times 5$ & 5 & 8 & 5 \\
    $2\times 6$ & 6 & 10 & 6\\
    $2\times 7$ & 7 & 12 & 7\\
    \hline
  \end{tabular}
  \medskip
  \caption{The Polar multiplicities $e_{2,n}^{1,k}$ of $2\times n$-matrices for $n\leq7$; 
  values for $k$ which are not explicitly listed, are zero.}
  \label{tab:PolarMultiplicities2byNRank1}
\end{table}

\begin{table}
  \centering
  \begin{tabular}{|r|ccccc|ccccc|}
    \hline 
    & & & $e_{3,n}^{1,k}$ & & & & & $e_{3,n}^{2,k}$ & & \\
    \hline
    $k:$ & 
    $0$ & $1$ & $2$ & $3$ & $4$ &
    $0$ & $1$ & $2$ & $3$ & $4$ \\
    \hline
      $3 \times 3$ & 
      6 & 12 & 12 & 6 & 3 &
      3 & 6 & 12 & 12 & 3\\
      $3 \times 4$ & 
      10 & 24 & 27 & 16 & 6 &
      6 & 16 & 27 & 24 & 10 \\
      $3 \times 5$ & 
      15 & 40 & 48 & 30 & 10 &
      10 & 30 & 48 & 40 & 15 \\
      $3 \times 6$ & 
      21 & 60 & 75 & 48 & 15 &
      15 & 48 & 75 & 60 & 21 \\
      $3 \times 7$ & 
      28 & 84 & 108 & 70 & 21 & 
      21 & 70 & 108 & 84 & 28 \\
      $3 \times 8$ & 
      36 & 112 & 147 & 96 & 28 & 
      28 & 96 & 147 & 112 & 36 \\
      $3 \times 9$ & 
45& 144& 192& 126& 36 &
36& 126& 192& 144& 45\\ 
      $3 \times 10$ & 
55& 180& 243& 160& 45& 
45& 160& 243& 180& 55\\ 
      $3 \times 11$ & 
66& 220& 300& 198& 55& 
55& 198& 300& 220& 66\\ 
      $3 \times 12$ & 
78& 264& 363& 240& 66& 
66& 240& 363& 264& 78\\ 
      $3 \times 13$ & 
91& 312& 432& 286& 78&
78& 286& 432& 312& 91\\
      $3 \times 14$ & 
105& 364& 507& 336& 91&
91& 336& 507& 364& 105\\ 
      $3 \times 15$ & 
120& 420& 588& 390& 105&
105& 390& 588& 420& 120 \\
      $3 \times 16$ & 
136& 480& 675& 448& 120&
120 & 448 & 675 & 480 & 136 \\
      $3 \times 17$ & 
153& 544& 768& 510& 136& 
136 & 510 & 768 & 554 & 153 \\
      $3 \times 18$ & 
171& 612& 867& 576& 153&
153 & 576 & 876 & 612 & 171\\
      $3 \times 19$ & 
190& 684& 972& 646& 171&
171 & 646 & 972 & 684 & 190 \\
      $3 \times 20$ & 
210& 760& 1083& 720& 190 &
190 & 720 & 1038 & 760 & 210 \\
      \hline
    \end{tabular}
    \medskip
  \caption{Polar multiplicities for $3\times n$-matrices for $n \leq 20$;
  all values for $k$ which are not explicitly listed are zero.}
  \label{tab:PolarMultiplicities3byNRank1}
\end{table}
\begin{landscape}
\begin{table}
  \centering
  \begin{tabular}{|r|ccccccccc|ccccccc|}
    \hline 
    & 
    \multicolumn{8}{c}{$e_{4,n}^{2,k}$ with $k$ running from the left to the right} & & 
    \multicolumn{6}{c}{$e_{4,n}^{1,k}$ with $k$ running from the left to the right} & \\
    \hline
    $k:$ & 
    0 & 1 & 2 & 3 & 4 & 5 & 6 & 7 & 8 & 
    0 & 1 & 2 & 3 & 4 & 5 & 6 \\
    \hline
$4 \times 4$ & 
20& 80& 176& 256& 286& 256& 176& 80& 20& 
20& 60& 84& 68& 36& 12& 4 \\
$4 \times 5$ & 
50& 240& 595& 960& 1116& 960& 595& 240& 50& 
35& 120& 190& 176& 105& 40& 10 \\
$4 \times 6$ & 
105& 560& 1488& 2520& 2980& 2520& 1488& 560& 105& 
56& 210& 360& 360& 228& 90& 20 \\
$4 \times 7$ & 
196& 1120& 3115& 5432& 6488& 5432& 3115& 1120& 196& 
84& 336& 609& 640& 420& 168& 35\\
$4 \times 8$ & 
336& 2016& 5792& 10304& 12390& 10304& 5792& 2016& 336& 
120& 504& 952& 1036& 696& 280& 56\\
$4 \times 9$ & 
540& 3360& 9891& 17856& 21576& 17856& 9891& 3360& 540& 
165& 720& 1404& 1568& 1071& 432& 84\\
$4 \times 10$ & 
825& 5280& 15840& 28920& 35076& 28920& 15840& 5280& 825& 
220& 990& 1980& 2256& 1560& 630& 120\\
$4 \times 11$ & 
1210& 7920& 24123& 44440& 54060& 44440& 24123& 7920& 1210&
286& 1320& 2695& 3120& 2178& 880& 165 \\
$4 \times 12$ & 
1716& 11440& 35280& 65472& 79838& 65472& 35280& 11440& 1716& 
364& 1716& 3564& 4180& 2940& 1188& 220 \\
$4 \times 13$ & 
- & - & - & - & - & - & - & - & - & 
455& 2184& 4602& 5456& 3861& 1560& 286 \\
$4 \times 14$ & 
- & - & - & - & - & - & - & - & - & 
560& 2730& 5824& 6968& 4956& 2002& 364 \\
$4 \times 15$ & 
- & - & - & - & - & - & - & - & - & 
680& 3360& 7245& 8736& 6240& 2520& 455\\
$4 \times 16$ & 
- & - & - & - & - & - & - & - & - & 
816& 4080& 8880& 10780& 7728& 3120& 560 \\
$4 \times 17$ & 
- & - & - & - & - & - & - & - & - & 
969& 4896& 10744& 13120& 9435& 3808& 680\\
$4 \times 18$ & 
- & - & - & - & - & - & - & - & - & 
1140& 5814& 12852& 15776& 11376& 4590& 816 \\
$4 \times 19$ & 
- & - & - & - & - & - & - & - & - & 
1330& 6840& 15219& 18768& 13566& 5472& 969 \\
$4 \times 20$ & 
- & - & - & - & - & - & - & - & - & 
1540& 7980& 17860& 22116& 16020& 6460& 1140 \\
$4 \times 21$ & 
- & - & - & - & - & - & - & - & - & 
1771& 9240& 20790& 25840& 18753& 7560& 1330 \\
$4 \times 22$ & 
- & - & - & - & - & - & - & - & - & 
2024& 10626& 24024& 29960& 21780& 8778& 1540 \\
$4 \times 23$ & 
- & - & - & - & - & - & - & - & - & 
2300& 12144& 27577& 34496& 25116& 10120& 1771 \\
$4 \times 24$ & 
- & - & - & - & - & - & - & - & - & 
2600& 13800& 31464& 39468& 28776& 11592& 2024 \\
    \hline
    $l:$ & 
    & & & & & & & & &
    6 & 5 & 4 & 3 & 2 & 1 & 0 \\
    \hline
    & 
    & & & & & & & & &
    \multicolumn{6}{c}{$e_{4,n}^{3,l}$ with $l$ running from the right to the left} & \\
    \hline 
  \end{tabular}
  \medskip
  \caption{Polar multiplicities for $4\times n$-matrices. A - indicates that this value has 
  not been computed; all other entries for $k$ and $l$
which are not explicitly listed, are equal to zero.}
  \label{tab:PolarMultiplicities4byN}
\end{table}
  
\end{landscape}
\begin{landscape}
\begin{table}
  \centering
  \begin{tabular}{|r|ccccccccccccc|}
    \hline 
    & 
    \multicolumn{12}{c}{$e_{5,n}^{2,k}$ with $k$ running from the left to the right} & 
    \\
    \hline
    $k:$ & 
    0 & 1 & 2 & 3 & 4 & 5 & 6 & 7 & 8 & 9 & 10 & 11 & 12 
    \\
    \hline 
$5 \times 5$ & 
175& 1050& 3180& 6320& 9180& 10320& 9360& 7080& 4545& 2430& 1020& 300 & 50 \\
$5 \times 6$ & 
490& 3360& 11445& 25396& 40890& 50520& 49495& 39120& 24981& 12640& 4830& 1260& 175 \\ 
$5 \times 7$ & 
1176& 8820& 32480& 77280& 132300& 172074& 175080& 141120& 89880& 44310& 16128& 3920& 490 \\
$5 \times 8$ & 
2520& 20160& 78498& 196080& 349860& 470400& 489930& 399504& 253980& 123200& 43470& 10080& 1176\\
$5 \times 9$ & 
4950& 41580& 168840& 437220& 803916& 1106640& 1171360& 962640& 611100& 293076& 101160& 22680& 2520 \\
$5 \times 10$ & 
9075& 79200& 332310& 884840& 1664685& 2332440& 2498535& 2064960& 1309290& 622560& 211365& 46200& 4950 \\
$5 \times 11$ & 
15730& 141570& 609840& 1660296& 3180705& 4518690& 4885440& 4055040& 2568456& 1213080& 406560& 87120& 9075 \\
$5 \times 12$ & 
26026& 240240& 1057485& 2931760& 5699760& 8188224& 8918470& 7427640& 4700460& 2207920& 732303& 154440& 15730 \\
    \hline 
    $l:$ & 
    12 & 11 & 10 & 9 & 8 & 7 & 6 & 5 & 4 & 3 & 2 & 1 & 0 
    \\
    \hline
    & 
    \multicolumn{12}{c}{$e_{5,n}^{3,l}$ with $l$ running from the right to the left} & 
    \\
    \hline
  \end{tabular}
  \medskip
  \caption{Polar multiplicities for $5\times n$-matrices of rank $2$ and $3$; 
    all entries for $k$ and $l$ which are not explicitly listed, are equal to zero.}
  \label{tab:PolarMultiplicities5byNRank2And3}
\end{table}
  
\end{landscape}
\begin{table}
  \centering
  \begin{tabular}{|r|ccccccccc|}
    \hline 
    & 
    \multicolumn{8}{c}{$e_{5,n}^{1,k}$ with $k$ running from the left to the right} & 
    \\
    \hline
    $k:$ & 
    0 & 1 & 2 & 3 & 4 & 5 & 6 & 7 & 8 
    \\
    \hline 
$5 \times 5$ & 
70& 280& 520& 580& 430& 220& 80& 20& 5 \\
$5 \times 6$ & 
126& 560& 1155& 1440& 1200& 696& 285& 80& 15 \\
$5 \times 7$ & 
210& 1008& 2240& 3010& 2700& 1680& 728& 210& 35 \\
$5 \times 8$ & 
330& 1680& 3948& 5600& 5285& 3440& 1540& 448& 70\\
$5 \times 9$ & 
495& 2640& 6480& 9576& 9380& 6300& 2880& 840& 126 \\
$5 \times 10$ & 
715& 3960& 10065& 15360& 15480& 10640& 4935& 1440& 210 \\
$5 \times 11$ & 
1001& 5720& 14960& 23430& 24150& 16896& 7920& 2310& 330 \\
$5 \times 12$ & 
1365& 8008& 21450& 34320& 36025& 25560& 12078& 3520& 495 \\
$5 \times 13$ & 
1820& 10920& 29848& 48620& 51810& 37180& 17680& 5148& 715 \\
$5 \times 14$ & 
2380& 14560& 40495& 66976& 72280& 52360& 25025& 7280& 1001 \\
$5 \times 15$ & 
3060& 19040& 53760& 90090& 98280& 71760& 34440& 10010& 1365 \\
$5 \times 16$ & 
3876& 24480& 70040& 118720& 130725& 96096& 46280& 13440& 1820 \\
$5 \times 17$ & 
4845& 31008& 89760& 153680& 170600& 126140& 60928& 17680& 2380 \\
$5 \times 18$ & 
5985& 38760& 113373& 195840& 218960& 162720& 78795& 22848& 3060\\
$5 \times 19$ & 
7315& 47880& 141360& 246126& 276930& 206720& 100320& 29070& 3876\\
$5 \times 20$ & 
8855& 58520& 174230& 305520& 345705& 259080& 125970& 36480& 4845\\
\hline
    $l:$ & 
    8 & 7 & 6 & 5 & 4 & 3 & 2 & 1 & 0 
    \\
    \hline
    & 
    \multicolumn{8}{c}{$e_{5,n}^{4,l}$ with $l$ running from the right to the left} & 
    \\
    \hline
  \end{tabular}
  \medskip
  \caption{Polar multiplicities for $5\times n$-matrices of rank $1$ and $4$;
   all entries for $k$ and $l$ which are not explicitly listed, are equal to zero.}
  \label{tab:PolarMultiplicities5byNRank1and4}
\end{table}

\begin{example}
  We may use the above tables together with formula 
  (\ref{eqn:FormulaForEulerCharacteristic}) to compute the Euler characteristics 
  of complex links of higher codimension for the generic determinantal 
  varieties. For instance 
  \[
    \chi\left( \mathcal L^6(M_{3,4}^3,0) \right) = 
    e_{3,4}^{2,0}
    -e_{3,4}^{2,1}
    +e_{3,4}^{2,2}
    -e_{3,4}^{2,3}
    = 6 - 16 + 27 - 24 = -7.
  \]
  Note that since $\mathcal L^6(M_{3,4}^3,0)$ is smooth of complex dimension $3$, 
  the summation over $\alpha \in A$ 
  degenerates and only the smooth stratum $V_{3,4}^2$ is relevant. Moreover, 
  the complex link of $M_{3,4}^3$ along this stratum is empty, so that the factor 
  $(1 - \chi(\mathcal L(M_{3,4}^3,V_{3,4}^2)))$ simply reduces to $1$. 

  This computation confirms the results in an earlier paper \cite{FKZ15} 
  where it was shown that the Betti numbers of $\mathcal L^6(M_{3,4}^3,0)$ are 
  \[
    (b_0,b_1,b_2,b_3) = (1,0,1,9).
  \]
  In \cite{FKZ15}, this was a very particular example. 
  We will discuss in Section \ref{sec:BettiNumbersOfSmoothLinks} how the distinct 
  Betti numbers can be computed for \textit{all} smooth complex links of 
  generic determinantal varieties. 

  \medskip

  To also give an example for a singular complex link, consider 
  $\mathcal L^5(M_{3,4}^3,0)$: This space is of complex dimension $4$ 
  and has isolated singularities 
  which are themselves determinantal of the form $(M_{2,3}^2,0)$. 
  If $D'_6$ is a plane of codimension $6$ in $\CC^{3\times 4}$ in 
  general position off the origin
  such that $\mathcal L^5(M_{3,4}^3,0) = D'_6 \cap M_{3,4}^3$, then 
  these singular points are precisely the intersection points 
  of $D'_6$ with $M_{3,4}^2$ and their number is equal to the multiplicity 
  $e_{3,4}^{1,0} = 10$. 
  
  If we let $l$ be a further, sufficiently general linear form on 
  $\CC^{3\times 4}$, then the generic fiber of its restriction to 
  $M_{3,4}^3 \cap D'_6$ is the previous space $\mathcal L^6(M_{3,4}^3,0)$ 
  whose topology we already know. According to Table 
  (\ref{tab:PolarMultiplicities3byNRank1}), $l$ has $10$ classical 
  Morse critical points on $V_{3,4}^2\cap D'_6$ and $10$ further stratified 
  Morse critical points on $V_{3,4}^1 \cap D'_6 = M_{3,4}^2 \cap D'_6$. 

  For the first set of points, $10$ more cells of real dimension $4$ 
  are added which changes the Euler characteristic by 
  $(-1)^{10-6}\cdot 10 \cdot 1 = 10$ in Formula (\ref{eqn:InductionFormulaForEulerCharacteristics}) 
  (resp. (\ref{eqn:FormulaForEulerCharacteristic})).

  The second set of critical points on the lower dimensional 
  stratum $V_{3,4}^1$ have a nontrivial complex link 
  $\mathcal L(M_{3,4}^3,V_{3,4}^1) \cong \mathcal L^0(M_{2,3}^2,0)$ appearing in the 
  normal Morse datum. This complex link is nothing but the 
  Milnor fiber of the $A_0^+$-singularity in 
  \cite{FKZ15}: Despite being a space of complex dimension $3$,
  it is homotopy equivalent to a 
  $2$-sphere and its Betti numbers are $(b_0,b_1,b_2,b_3) = (1,0,1,0)$ in 
  accordance with the Formula by Ebeling and Gusein-Zade 
  (\ref{eqn:EbelingGuseinZadeFormulaEulerCharacteristic}). This means 
  that we attach real $3$-cells rather than $4$-cells and the Euler characteristic 
  changes by $-10$ rather than $+10$, as one might have expected. The overall outcome therefore is 
  \[
    \chi\left( \mathcal L^5(M_{3,4}^3,0 )\right) = 
    \chi(\mathcal L^6(M_{3,4}^3,0)) 
    + \underbrace{10 \cdot (1-(1-0+1))}_{\alpha = 1}
    + \underbrace{10 \cdot (1-0)}_{\alpha = 2} = -7.
  \]
  It is interesting to see the cancellation of the two contributions to 
  the Euler characteristic
  given that the equality of the two relevant multiplicities is not a 
  coincidence, but due to the duality noted by Gaffney.

  \medskip
  We shall 
  see later on 
  that the first four Betti numbers of the open stratum $V_{3,4}^2 \cap D'_6$ 
  of $\mathcal L^5(M_{3,4}^3,0)$ are 
  \[
    (b_0,b_1,b_2,b_3) = (1,0,1,0).
  \]
  It can be shown that the attachements of the $3$-cells at the points of 
  $V_{3,4}^1 \cap D'_6$ glue their boundaries all to the very same 
  generator of the second homology group. From the long exact 
  sequence of the pair $(X_{3,4}^2\cap D'_6, V_{3,4}^2 \cap D'_6)$ 
  and the previous computation 
  of the Euler characteristic 
  one can then deduce 
  that the Betti numbers of 
  $\mathcal L^5(M_{3,4}^3,0)$ must be 
  \[
    (b_0,b_1,b_2,b_3,b_4) = (1,0,0,9,1).
  \]
  In particular we see that the cells attached at the 
  classical critical points of $l$ on the smooth stratum 
  kill off all the cycles in the top homology group of 
  $\mathcal L^6(M_{3,4}^3,0)$. Those coming from the stratified 
  Morse critical points on the lower dimensional stratum survive 
  and lead to new cycles.
  Details for the computation of the Betti numbers 
  in this example will appear in a forthcoming note. 
\end{example}

\section{Determinantal strata as homogeneous spaces}

Let $G$ be a Lie group and $* \colon G \times X \to X$ a smooth 
action on a manifold $X$. Then for every point $x\in X$ the orbit 
$G*x \subset X$ is a locally closed submanifold which is diffeomorphic 
to the quotient $G/G_x$ of $G$ by the stabilizer $G_x$ of $x$. 
The next lemma shows that up to homotopy we can 
always find a \textit{compact model} for this orbit by choosing 
an appropriate maximal compact subgroup of $G$.

\begin{lemma}
  Let $G$ be a Lie group, $G' \subset G$ a closed subgroup, and 
  $U \subset G$ a maximal compact subgroup such that $U' = U \cap G'$ is 
  again a maximal compact subgroup of $G'$. Then the inclusion 
  $U/U' \hookrightarrow G / G'$ is a weak homotopy equivalence.
  \label{lem:HomotopyEquivalenceForCompactSubgroups}
\end{lemma}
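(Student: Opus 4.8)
The plan is to compare the two locally trivial principal bundles
\[
  U' \hookrightarrow U \longrightarrow U/U'
  \qquad\text{and}\qquad
  G' \hookrightarrow G \longrightarrow G/G'
\]
and to deduce the statement from the associated long exact homotopy sequences together with the five lemma.

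First I would record the two ingredients that make the comparison work. On the one hand, since $G'$ is a closed subgroup of the Lie group $G$, the quotient map $G \to G/G'$ is a locally trivial principal $G'$-bundle, and likewise $U \to U/U'$ is a principal $U'$-bundle; the equivariant inclusion $U \hookrightarrow G$ lying over $U' \hookrightarrow G'$ therefore induces the inclusion $\iota \colon U/U' \hookrightarrow G/G'$ together with a morphism of fibre bundles. On the other hand, by the Cartan--Iwasawa--Malcev structure theorem a Lie group with finitely many connected components (which covers every case relevant in this paper) is diffeomorphic to the product of a maximal compact subgroup with a Euclidean space, in such a way that the compact subgroup is a fibre; in particular the inclusion of a maximal compact subgroup is a deformation retract. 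Applying this to $U \subset G$ and, using the hypothesis that $U' = U \cap G'$ is maximal compact in $G'$, also to $U' \subset G'$, I get that both vertical inclusions $U \hookrightarrow G$ and $U' \hookrightarrow G'$ are homotopy equivalences, hence bijections on $\pi_n$ for all $n \geq 0$ and all base points.

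Next I would write down the commutative ladder of long exact homotopy sequences of these two fibrations,
\[
  \cdots \to \pi_n(U') \to \pi_n(U) \to \pi_n(U/U') \to \pi_{n-1}(U') \to \pi_{n-1}(U) \to \cdots
\]
mapping degreewise to
\[
  \cdots \to \pi_n(G') \to \pi_n(G) \to \pi_n(G/G') \to \pi_{n-1}(G') \to \pi_{n-1}(G) \to \cdots ,
\]
in which, by the previous paragraph, every vertical arrow coming from the $U$- or $U'$-column is a bijection. The five lemma then forces $\iota_* \colon \pi_n(U/U') \to \pi_n(G/G')$ to be a bijection in every degree and for every choice of base point, so that $\iota$ is a weak homotopy equivalence, as claimed.

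The step I expect to require the most care is the bottom of the ladder, where $\pi_1$ is merely a group (possibly nonabelian) and $\pi_0$ merely a pointed set, so that the plain five lemma for abelian groups does not literally apply. I would handle this with the standard nonabelian variant of the five lemma for the tails of the homotopy exact sequences of a fibration; alternatively one can treat the component groups separately and pass to universal covers of the identity components, using that $U$ meets every component of $G$ and $U'$ every component of $G'$. In either approach the argument is purely formal once the two ingredients above are in place. If one wanted an honest, rather than merely weak, homotopy equivalence, one could instead invoke Mostow's theorem that $G/G'$ is $U$-equivariantly diffeomorphic to a $U$-vector bundle over $U/U'$, realizing $U/U'$ as the zero section; but the weak statement suffices here and already follows from the softer argument above.
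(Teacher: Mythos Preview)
Your proof is correct and follows essentially the same approach as the paper: compare the long exact homotopy sequences of the two fibrations $U' \to U \to U/U'$ and $G' \to G \to G/G'$, use that the inclusions of maximal compact subgroups are homotopy equivalences, and conclude with the five lemma. You add welcome detail the paper omits, in particular the caveat about the nonabelian tail of the sequence and the reference to Mostow's theorem as a strengthening, but the core argument is identical.
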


\begin{proof}
  The projection $G \to G/G'$ is a fiber bundle with fiber 
  $G'$ and the same holds for $U \mapsto U/U'$ 
  with fiber $U'$.
  Hence, there is a commutative diagram of long exact sequences of homotopy groups 
  \[
    \xymatrix{
      \cdots \ar[r] & 
      \pi_k( G' ) \ar[r] & 
      \pi_k( G ) \ar[r] & 
      \pi_k( G/G' ) \ar[r] & 
      \pi_{k-1}(G' ) \ar[r] & 
      \cdots\\
      \cdots \ar[r] & 
      \pi_k( U' ) \ar[u] \ar[r] & 
      \pi_k( U ) \ar[u] \ar[r] \ar[u] & 
      \pi_k( U/U' ) \ar[r] \ar[u] & 
      \pi_{k-1}( U' ) \ar[r] \ar[u] & 
      \cdots
    }
  \]
  and it is well known that for any Lie group the inclusion of its maximal compact 
  subgroup is a homotopy equivalence. The assertion therefore follows from the five-lemma.
\end{proof}

\subsection{The Lie group action on $\CC^{m \times n}$}
We now turn to the discussion of the strata in the rank stratification 
as homogeneous spaces. Fix integers $0 < m \leq n$. The space $\CC^{m\times n}$ of complex 
$m\times n$-matrices has a natural left action by the complex Lie group 
\[
  G_{m,n} := \GL(m;\CC) \times \GL(n;\CC)
\]
via multiplication:
\[
  * \colon G_{m,n} \times \CC^{m\times n} \to \CC^{m\times n}, 
  \quad 
  ((P,Q),A) \mapsto (P,Q)*A = P \cdot A \cdot Q^{-1}.
\]
For two matrices $A \in \CC^{m\times n}$ and $B \in \CC^{m' \times n'}$ 
we will denote by $A \oplus B$
the $(m+m') \times (n+n')$ block matrix 
\[
  \begin{pmatrix}
    A & 0 \\
    0 & B
  \end{pmatrix}.
\]
Let $0_{m,n} \in \CC^{m\times n}$ be the zero matrix. 
For any number $0\leq r \leq m$ 
we will write $\one_{m,n}^r = \one^r \oplus 0_{m-r,n-r}$ for 
the $m\times n$-matrix with a unit matrix $\one^r$
of rank $r$ in the upper left corner and zeroes in all other entries. 
Then clearly 
\[
  V_{m,n}^r = G * \one_{m,n}^r \cong G_{m,n}/G_{m,n}^r,
\]
where $G_{m,n}^r$ is the stabilizer of $\one_{m,n}^r$ in $G$. 
A direct computation yields that $G_{m,n}^r$ consists of pairs 
of block matrices of the form 
\[
  \left( 
  \begin{pmatrix}
    A & B \\
    0 & C 
  \end{pmatrix},
  \begin{pmatrix}
    A & 0 \\ 
    D & E
  \end{pmatrix}
  \right)
\]
with $A \in \GL(r,\CC)$, $C \in \GL(m-r;\CC)$ and $E\in \GL(n-r;\CC)$ invertible, 
and $B$ and $D$ arbitrary of appropriate sizes. 

As a compact subgroup $U_{m,n} \subset G_{m,n}$ we may choose the 
unitary matrices $U(m) \times U(n)$. 
It is easily verified that its intersection
$U_{m,n}^r$ with 
the subgroup $G_{m,n}^r$ consists of pairs of matrices 
\[
  \left( 
  \begin{pmatrix}
    S & 0 \\
    0 & P 
  \end{pmatrix},
  \begin{pmatrix}
    S & 0 \\
    0 & Q
  \end{pmatrix}
  \right)
  = \left( S \oplus P, S \oplus Q \right)
\]
with $S \in U(r)$, $P \in U(m-r)$, and $Q \in U(n-r)$, 
and that this is in fact a maximal compact subgroup of $G_{m,n}^r$.
Note that due to the fact that in contrast to $G_{m,n}^r$ the off-diagonal blocks in the 
subgroup $U_{m,n}^r$ are all zero, we find that 
\begin{equation}
  U_{m,n}^r \cong U(r) \times U(m-r) \times U(n-r)
  \label{eqn:SplittingOfStabilizerOneOrbit}
\end{equation}
is again isomorphic to a product of unitary groups. 

\medskip

Let us, for the moment, consider only 
the first factor $U(m)$ of $U_{m,n}$ which 
we may consider as a subgroup via the inclusion $U(m) \times \{\one^r\} \subset U_{m,n}$. 
The stabilizer 
of $\one_{m,n}^r$ of the restriction of the action to 
$U(m)$ is simply the subgroup $\one^r \oplus U(m-r)$. 
The $U(m)$-orbit can easily be identified with the 
Stiefel manifold $\Stief(r,m)$ of orthonormal 
$r$-frames in $\CC^m$:
\begin{equation}
  \Stief(r,m) \cong U(m)/\left( \one^r \oplus U(m-r) \right) 
  \cong U(m)* \one_{m,n}^r
  \label{eqn:StiefelManifoldAsHomogeneousSpace}
\end{equation}
so that an $r$-frame $\underline v = (v_1,\dots,v_r)$ in $\CC^m$ is given by the 
first $r$ columns of the matrix $(\underline v) = A \cdot \one_{m,n}^r$ for 
some $A\in U(m)$.
The group $U(r)$ operates naturally on the Stiefel 
manifold via the left action
\[
  U(r) \times \Stief(r,m) \to \Stief(r,m), \quad 
  (S, A\cdot \one^r_{m,r} ) 
  \mapsto A \cdot \one^r_{m,r} \cdot S^{-1}.
\]
The quotient of this action is the Grassmannian 
of $r$-planes 
$\Grass(r,m)$ since either two $r$-frames span 
the same subspace if and only if they lay in the 
same orbit under this $U(r)$-action.

It is easy to see with the 
above identifications (\ref{eqn:StiefelManifoldAsHomogeneousSpace}) 
that two matrices $A$ and $A'$ in $U(m)$ represent the 
same element in $\Grass(r,m)$ if and only if 
$A^{-1} \cdot A' \in U(r) \oplus U(m-r)$: 
\begin{eqnarray*}
  A \cdot \one_{m,r}^r &=& A' \cdot \one_{m,r}^r \cdot S^{-1} \\
  \Leftrightarrow \qquad \one_{m,r}^r &=& 
  A^{-1} \cdot A' \cdot (S^{-1} \oplus \one^{m-r}) \cdot \one_{m,r}^r\\
  \Leftrightarrow \quad 
  ( \one^r \oplus P ) 
  &=& 
  A^{-1} \cdot A' \cdot (S^{-1} \oplus \one^{m-r} ) \\
  \Leftrightarrow \quad
  (S \oplus P)
  &=&  
  A^{-1} \cdot A' 
\end{eqnarray*}
for some $P \in U(m-r)$.
In other words, the above $U(r)$ action is compatible 
with the natural inclusion of subgroups
\[
  U(r) \oplus \one^{m-r} \hookrightarrow 
  U(r) \oplus U(m-r) \hookrightarrow U(m)
\]
and accordingly 
\begin{equation}
  \Grass(r,m) \cong \Stief(r,m)/U(r) \cong U(m)/U(r) \oplus U(m-r).
  \label{eqn:GrassmannianAsHomogeneousSpace}
\end{equation}

We can repeat these considerations for the second factor $U(n)$ 
embedded into $U_{m,n}$ as $\{\one^m\} \times U(n)$.
Then 
\[
  \Stief(r,n) \cong U(n)/(\one^r\oplus U(n-r)) \cong U(n) * \one_{m,n}^r
\]
with any $r$-frame $\underline w \in \Stief(r,n)$ 
given by the first $r$ \textit{rows} of the matrix $(\underline w) = \one^{r}_{m,n} \cdot B^{-1}$ 
for some $B \in U(n)$. Accordingly, we will write the left action by $U(r)$ 
on $\Stief(r,n)$ as 
\[
  U(r) \times \Stief(r,n) \to \Stief(r,n), \quad 
  (S, \one_{m,n}^r \cdot B^{-1} ) \mapsto S \cdot \one_{m,n}^r \cdot B^{-1}
\]
in this case. 

Note that, on the one hand, the subgroup 
$U_{m,n}^r$ intersects the subgroups $U(m) \times \{\one^n\}$ and 
$\{\one^m\}\times U(n)$ in $\one^r \oplus U(m-r)$ and  
$\one^r \oplus U(n-r)$, respectively, and  
the action of the latter subgroups affects only either one of the two 
factors.
The $U(r)$-action, on the other hand, is ``diagonal''
and we may exploit these facts by observing 
that the quotient 
\begin{equation}
  U_{m,n}/U(m-r)\times U(n-r) \cong \Stief(r,m) \times \Stief(r,n)
  \label{eqn:ProductOfStiefelVarieties}
\end{equation}
is a product of Stiefel manifolds, equipped with a \textit{free}, diagonal 
$U(r)$-action. 
The quotient $\Stief(r,m)\times \Stief(r,n)/U(r)$ is then naturally isomorphic to 
$U_{m,n}/\left( U(r) \times U(m-r) \times U(n-r) \right)$ and, via the particular 
choice of the matrix $\one_{m,n}^r$, this manifold can be identified with the orbit 
$U_{m,n}*\one_{m,n}^r$. 
We will in the following denote this orbit by $O_{m,n}^r \subset V_{m,n}^r \subset \CC^{m\times n}$ 
and refer to it as the \textit{compact orbit model} for the stratum $V_{m,n}^r$. 

\begin{lemma}
  The two natural projections 
  \begin{equation*}
    \xymatrix{
      \Stief(r,n) \ar@{^{(}->}[dr] & & 
      \Stief(r,m) \ar@{_{(}->}[dl] \\
      &O_{m,n}^r\ar[dl]_{\lambda_2} \ar[dr]^{\lambda_1} & \\
      \Grass(r,n) & & \Grass(r,m)
    }
  \end{equation*}
  equip the space $U_{m,n}/U_{m,n}^r \cong U_{m,n}*\one_{m,n}^r$ with two structures as a fiber bundle 
  over the respective Grassmannian with Stiefel manifolds as fibers.
  \label{lem:FiberBundleStructureOfROrbitOverGrassmannian}
\end{lemma}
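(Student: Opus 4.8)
The plan is to recognize $O_{m,n}^r$ as an associated fiber bundle over each of the two Grassmannians, reading off the fiber from the identifications already established. Recall from the discussion leading up to the lemma that $O_{m,n}^r \cong U_{m,n}/U_{m,n}^r \cong U_{m,n}*\one_{m,n}^r$ and that, by (\ref{eqn:ProductOfStiefelVarieties}), this orbit is the quotient $\bigl(\Stief(r,m)\times\Stief(r,n)\bigr)/U(r)$ by the free diagonal $U(r)$-action. I will moreover use the standard fact — immediate from the homogeneous space descriptions (\ref{eqn:StiefelManifoldAsHomogeneousSpace}) and (\ref{eqn:GrassmannianAsHomogeneousSpace}) — that the projection $q_1\colon\Stief(r,m)\to\Stief(r,m)/U(r)=\Grass(r,m)$ sending a frame to its span is a principal $U(r)$-bundle, namely the unitary frame bundle of the tautological rank-$r$ subbundle on $\Grass(r,m)$, and likewise $q_2\colon\Stief(r,n)\to\Grass(r,n)$.

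First I would treat $\lambda_1$. The first projection $\Stief(r,m)\times\Stief(r,n)\to\Stief(r,m)$ is equivariant for the diagonal $U(r)$-action on its source and the defining $U(r)$-action on its target, so on passing to $U(r)$-quotients it induces precisely the map $\lambda_1\colon O_{m,n}^r\to\Grass(r,m)$ of the diagram. This realizes $\lambda_1$ as the fiber bundle with fiber $\Stief(r,n)$ associated to the principal bundle $q_1$ via the $U(r)$-action on $\Stief(r,n)$: over any open $V\subset\Grass(r,m)$ trivializing $q_1$ one has $\Stief(r,m)|_V\cong V\times U(r)$, whence $\lambda_1^{-1}(V)\cong\bigl(V\times U(r)\times\Stief(r,n)\bigr)/U(r)\cong V\times\Stief(r,n)$, which is the desired local triviality. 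In particular $\lambda_1$ is a locally trivial fiber bundle with fiber the Stiefel manifold $\Stief(r,n)$, and its fiber over the class of $\one_{m,n}^r$ is exactly the sub-orbit $U(n)*\one_{m,n}^r\cong\Stief(r,n)$ appearing in the diagram (both are closed submanifolds of the same dimension sitting inside a connected manifold, hence equal).

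The map $\lambda_2\colon O_{m,n}^r\to\Grass(r,n)$ is then handled by the identical argument applied to the second projection $\Stief(r,m)\times\Stief(r,n)\to\Stief(r,n)$ and the principal bundle $q_2$, exhibiting $\lambda_2$ as the fiber bundle with fiber $\Stief(r,m)$ associated to $q_2$, with fiber over the class of $\one_{m,n}^r$ the sub-orbit $U(m)*\one_{m,n}^r\cong\Stief(r,m)$. It then remains only to check that, under $O_{m,n}^r\cong U_{m,n}*\one_{m,n}^r\subset\CC^{m\times n}$, the maps $\lambda_1$ and $\lambda_2$ are the geometric ones $A\mapsto\im A$ and $A\mapsto(\ker A)^\perp$ to the respective Grassmannians; this is a routine unwinding of definitions using $\im\bigl(P\,\one_{m,n}^r\,Q^{-1}\bigr)=P\cdot\Span(e_1,\dots,e_r)$ for $(P,Q)\in U_{m,n}$, together with the dual identity for the kernel. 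The step I expect to need the most care is precisely this last compatibility check together with the bookkeeping for the several $U(r)$-actions involved — the diagonal action on the product of Stiefel manifolds versus the principal-bundle (reparametrization) action on each factor — but I do not anticipate any genuine difficulty there. The real content of the lemma is the single structural observation that $\Stief(r,m)\to\Grass(r,m)$ is a principal $U(r)$-bundle, which makes $O_{m,n}^r$ an associated bundle over each Grassmannian and the fiber identification automatic.
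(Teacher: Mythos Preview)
Your proof is correct and follows essentially the same approach as the paper: both use the identification $O_{m,n}^r \cong (\Stief(r,m)\times\Stief(r,n))/U(r)$ together with the principal $U(r)$-bundle $\Stief(r,m)\to\Grass(r,m)$ to exhibit $\lambda_1$ as a fiber bundle with fiber $\Stief(r,n)$. The only difference is stylistic: you invoke the associated-bundle construction by name and read off local triviality from a trivialization of the principal bundle, whereas the paper carries out this construction by hand, describing explicit ``parallel sections'' over each $U(r)$-orbit in $\Stief(r,m)$ to identify the fibers.
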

\begin{proof}
  It suffices to establish this claim for the first projection $\lambda_1$ to $\Grass(r,m)$.
  Consider the commutative diagram 
  \[
    \xymatrix{
      \Stief(r,m)\times \Stief(r,n) \ar[d]^{\lambda'_1} \ar[r]^-{\alpha} & 
      O_{m,n}^r \ar[d]^{\lambda_1} \\
      \Stief(r,m) \ar[r]^{\beta} & 
      \Grass(r,m)
    }
  \]
  where $\lambda'_1$ takes a pair of $r$-frames $(\underline v, \underline w)$ to $\underline v$, 
  $\beta$ is the quotient map $\underline v \mapsto \Span \underline v$ 
  from (\ref{eqn:GrassmannianAsHomogeneousSpace}) and 
  $\alpha$ the one from the discussion of (\ref{eqn:ProductOfStiefelVarieties}). 
  We need to describe the fiber of an arbitrary point $W \in \Grass(r,m)$. To this 
  end, consider its preimages
  \[
    (\beta \circ \lambda'_1)^{-1}(\{W\}) =
    \beta^{-1}(\{W\}) \times \Stief(r,n) 
    \overset{\lambda'_1}{\longrightarrow} 
    \beta^{-1}(\{W\}) = U(r) * \underline v
  \]
  with $\underline v = (v_1,\dots,v_r) \in \Stief(r,m)$ some $r$-frame in $\CC^m$ 
  with $\Span \underline v = W$. The fiber of $\lambda'_1$ over $\underline v$ is 
  simply the Stiefel manifold $\Stief(r,n)$. Now if $\underline v'$ is any other 
  $r$-frame spanning $W$, then there exists a unique matrix $S\in U(r)$ such that 
  $\underline v' = \underline v \cdot S^{-1}$. The free diagonal action 
  on $\Stief(r,m) \times \Stief(r,n)$ gives a natural identification of the fibers 
  of $\lambda'_1$ over $\underline v$ and $\underline v'$ via 
  \[
    (\underline v, \underline w) \mapsto (\underline v', S \cdot \underline w)
  \]
  and this furnishes an obvious notion of \textit{parallel sections} of $\lambda'_1$ 
  over the orbit $U(r) * \underline v$. These parallel sections can then 
  be identified with either one 
  of the Stiefel manifolds $\Stief(r,n) \times \{\underline v\}$ 
  over a point $\underline v$ in the orbit.
\end{proof}

\begin{remark}
  The structures of the manifolds $O_{m,n}^r$ as fiber bundle is in general not 
  trivial. For instance 
  \[
    S^{2m-1} \cong O_{m,1}^1 \overset{\lambda_1}{\longrightarrow} \Grass(1,m) \cong \PP^{m-1}
  \]
  is the Hopf fibration which is known not to be a product.
\end{remark}

\subsection{The Cartan model}

The cohomology of homogeneous spaces can be computed via the Cartan model 
as outlined by Borel in \cite[Th\'eor\`eme 25.2]{Borel53}. 
Let $G$ be a \textit{compact}, connected Lie group and $U \subset G$ a closed subgroup thereof. 
Then \cite[Th\'eor\`eme 25.2]{Borel53} allows for the description of the cohomology 
of the quotient $G/U$ as the cohomology of an explicit complex 
\begin{equation}
  H^\bullet( G/U ) \cong H\left( S_{U} \otimes_\ZZ \bigwedge F \right)
  \label{eqn:BorelComplex}
\end{equation}
under certain favourable assumptions on $G$ and $U$. The objects on the 
right hand side are the following. 

\begin{itemize}
  \item The ring $S_U$ is the cohomology ring of a \textit{classifying space} for 
    the group $U$, see for instance \cite[Part I, Chapter 4.11]{Husemoeller94}. 
    Such a classifying space $BU$ for a compact Lie group $U$ is given by the 
    quotient of any weakly contractible space $EU$ with a free $U$-action. Then 
    the projection $EU \to BU$ turns $EU$ into a \textit{universal bundle} in the 
    sense that every principal $U$-bundle $P$ over a paracompact Haussdorff space 
    $X$ can be written as $P = f^* EU$ for some continuous map $f \colon X \to BU$. 

    In particular, this property can be used to show that the cohomology ring 
    $S_U = H^\bullet(BU)$ is in fact unique up to unique isomorphism and independent 
    of the choice of the space $EU$, see e.g. 
    \cite[Section 18]{Borel53}\footnote{The approach by Borel might seem unnecessarily technical 
      given the Milnor construction of universal bundles in \cite{Milnor56} three years later.}. 
    In most practical cases (cf. 
    \cite[Th\'eor\`eme 19.1]{Borel53}) the cohomology rings of 
    classifying spaces are weighted homogeneous 
    polynomial rings $S_U \cong \ZZ[c_1,\dots,c_r]$ in variables of even degree 
    and therefore in particular commutative.

    Furthermore, we note that the total space $EG$ of a universal $G$ bundle 
    is naturally equipped with a free $U$ action, as well. It can therefore 
    also be used to construct a classifying space $BU$ as the intermediate quotient 
    \[
      EG \to BU = EG/U \to BG = EG/G.
    \]
    The pullback in cohomology of the projection $BU \to BG$ is called the 
    \textit{characteristic homomorphism} $\rho \colon S_G \to S_U$ for the 
    inclusion of the subgroup $U \subset G$, cf. \cite[Th\'eor\`eme 22.2]{Borel53}.
  \item The module $F$ is a free, graded $\ZZ$-module 
    \[
      F = \ZZ \varepsilon_1 \oplus \dots \oplus \ZZ \varepsilon_r
    \]
    in generators $\varepsilon_i$ of odd degree. Hopf has shown 
    in \cite[Satz 1]{Hopf41} that 
    the rational homology of a compact Lie group $U$ is graded isomorphic 
    to the homology of a product of odd-dimensional spheres. 
    This result has been strengthened
    to also include cohomology with integer coefficients in the 
    absence of torsion in $H^\bullet(U)$, see e.g. \cite[Proposition 7.3]{Borel53}. 
    Then the generators $\varepsilon_i$ can be thought of as the volume 
    forms of the spheres and the cup product turns the cohomology of the 
    group into an exterior algebra on these generators
    \[
      H^\bullet(U) \cong 
      \bigwedge \left( \ZZ \varepsilon_1 \oplus \dots \oplus \ZZ \varepsilon_r \right)
    \]
    which appears as $\bigwedge F$ in (\ref{eqn:BorelComplex}).
  \item The differential $D$ on $S_U \otimes_\ZZ \bigwedge F$ is given by 
    linear extension of the map
    \[
      D ( a \otimes 1 ) = 0, \quad D( 1 \otimes \varepsilon_i ) = \rho(c_i) \otimes 1
    \]
    for all $a\in S_U$, where $c_i$ is a \textit{transgression} element of $\varepsilon_i$ in a 
    universal $G$-bundle and $\rho \colon S_G \to S_U$ the characteristic 
    homomorphism from before. A transgression can be defined in a universal $U$-bundle 
    $\pi \colon EU \to BU$ 
    as above: The element $\varepsilon_i$ is called \textit{universally transgressive} 
    if there exists a cochain $\omega_i$ on $EU$ which restricts to the cohomology class 
    $\varepsilon_i$ in every fiber and for which there exists another cochain $a_i$ on 
    $BG$ with $\pi^* a_i = \D \omega_i$. Then $c_i$ is taken to be the cohomology class of $a_i$ 
    and it is said to ``correspond to $\varepsilon_i$ under transgression''. 
    For a more detailed account see \cite{Borel53}.
    We will discuss the particular form of this 
    transgression below in the cases of interest for this article. 
\end{itemize}
Note that (\ref{eqn:BorelComplex}) is an isomorphism of graded $\ZZ$-modules 
with grading given by the degree of the cohomology classes on either side. 
But we can also think of the algebra $S_U \otimes_{\ZZ} \bigwedge F$ as a 
Koszul algebra in the generators $1 \otimes_\ZZ \varepsilon_i$ over the ring $S_U$:
\[
  S_U \otimes_\ZZ \bigwedge F \cong \bigoplus_{p=0}^r 
  \left(\bigwedge^p \left(\bigoplus_{i=1}^r S_U \otimes_{\ZZ} \varepsilon_i \right)\right).
\]
This gives another grading on the right hand side of (\ref{eqn:BorelComplex})
by the degrees $p$ of the distinct exterior powers. 
In the following we will refer to the two gradings as the \textit{cohomological degree} 
and the \textit{Koszul degree} respectively.

\subsection{Maximal tori and the Weyl group}

As explained in \cite[Section 29]{Borel53}, the characteristic homomorphism 
$\rho \colon S_G \to S_U$ associated to an inclusion of a subgroup $U \subset G$ 
is best understood in terms of inclusions of maximal tori $S \subset U$ 
and $T \subset G$. This will be an essential ingredient for the computation of the cohomology 
of the orbits $O_{m,n}^r$ in the determinantal strata.

For the group $U(1) \cong S^1 \subset \CC$ a classifying 
space $BU(1)$ is given by the infinite projective space 
\[
  BU(1) \cong \bigcup_{k=1}^\infty \PP^k
\]
which can be understood as a direct limit with $\PP^k$ included in 
$\PP^{k+1}$ as the hyperplane section at infinity. A universal 
$U(1)$-bundle is then given by $\OO(-1)^*$, the tautological bundle 
with its zero section removed or, equivalently, by the direct limit 
of unit spheres $S^{2k+1} \subset \CC^{k+1}\setminus\{0\}$ which 
are projected to $\PP^k$ via the Hopf fibration. Then the 
cohomology ring $S_{U(1)} \cong \ZZ[\alpha]$ of $BU(1)$ is a free polynomial 
ring 
in $\alpha$, the first Chern class of $\OO(1)$, and the 
generator $\varepsilon$ of $H^1( U(1))$ corresponds to $\alpha$ 
under transgression. 

Now it is easy to see that for a torus $T = (U(1))^r$ the classifying 
spaces and universal bundles can be chosen to be merely products of 
the one just described for $U(1)$. It follows that $S_T$ is 
a polynomial ring 
\[
  S_T \cong \ZZ[\alpha_1,\dots,\alpha_r]
\]
with all 
$\alpha_j$ of degree $2$. 

If $T$ is a maximal torus in a compact 
connected Lie group $G$, then $S_G$ is contained in $S_T$ as 
the invariant ring under the action of the Weyl group. 
Moreover, if $U\subset G$ is a subgroup and the tori $S\subset U$ 
and $T\subset G$ have been chosen such that $S \subset T$, 
then the characteristic homomorphism $\rho = \rho(U,G)$ 
for the inclusion $U \subset G$ 
is completely determined by the one for the inclusion $S \subset T$ 
so that one has a commutative diagram 
\begin{equation}
  \xymatrix{
    S_G \ar[r]^{\rho(U,G)} \ar@{^{(}->}[d] &
    S_U \ar@{^{(}->}[d] \\
    S_T \ar[r]^{\rho(S,T)} & 
    S_S
  }
  \label{eqn:InducedMapOnInvariantSubrings}
\end{equation}
where the vertical arrows denote the inclusions of invariant subgroups. 
We note that in general one needs real coefficients in cohomology 
as in \cite[Proposition 29.2]{Borel53}. For the particular cases 
that we shall need below, however, the calculations have been 
carried out for integer coefficients as well.

\subsection{Cohomology of Stiefel manifolds and Grassmannians}
\label{sec:StiefelManifoldsAndFlagVarieties}

As discussed earlier the Stiefel manifolds and Grassmannians  
can be considered as homogeneous spaces of $U(n)$ modulo 
various subgroups of block matrices with unitary blocks. Also 
the orbit varieties $O_{m,n}^r$ can be decomposed 
into these building blocks. Therefore we briefly review the 
theory for the Lie group $U(n)$ as it can be found in 
\cite{Borel53} or \cite{Husemoeller94} and illustrate the 
formula (\ref{eqn:BorelComplex}) for the classical cases, 
thereby fixing notation for the description of the cohomology 
of the orbit models $O_{m,n}^r$ that we are 
really aiming for. 

\medskip

The cohomology of the unitary group 
$U(n)$ is known to be 
\begin{equation}
  H^\bullet(U(n)) \cong \bigwedge \left( 
  \ZZ\varepsilon_1 \oplus
  \ZZ\varepsilon_2 \oplus
  \dots 
  \ZZ \varepsilon_{n-1} \oplus
  \ZZ \varepsilon_n 
  \right)
  \label{eqn:CohomologyUN}
\end{equation}
with generators $\varepsilon_i$ of degree $2i-1$, see 
for example \cite[Proposition 9.1]{Borel53} or 
\cite[Part I, Chapter 7]{Husemoeller94}. 
We will write 
\[
  F_n = 
  \ZZ[-1] \oplus 
  \ZZ[-3] \oplus 
  \dots \oplus 
  \ZZ[3-2n] \oplus 
  \ZZ[1-2n] 
\]
for the free graded module whose direct summands are shifted by 
$1-2i$ for $1 \leq i \leq n$ so that with this notation 
$H^\bullet(U(n)) \cong \bigwedge F_n$. 
A maximal torus in $U(n)$ is given by the diagonal matrices
\begin{eqnarray*}
  T &=&  \{\mathrm{diag}(\lambda_1,\dots,\lambda_n) : \lambda_i \in U(1) \}
\end{eqnarray*}
and the Weyl group is the symmetric group $\mathfrak S_n$ of permutations 
of $n$ elements in this case. Writing $\alpha_1,\dots,\alpha_n$ for the 
generators of the cohomology of $S_T$ as above we find that 
\[
  S_G \hookrightarrow S_T
\]
is the inclusion of the invariant subring $S_G = \ZZ[\alpha_1,\dots,\alpha_n]^{\mathfrak S_n}$. 
According to the fundamental theorem of symmetric functions, $S_G$ is itself a 
polynomial ring in the elementary symmetric functions 
\[
  \sigma_k^n(\alpha_1,\dots,\alpha_n) := \sum_{0<i_1<\dots<i_k\leq n} \prod_{j=1}^n \alpha_{i_j}.
\]
Moreover, the generators $\varepsilon_k$ correspond to these $\sigma_k$ under 
transgression in a universal bundle $EU(n) \to BU(n)$, see for example 
\cite[Section 19]{Borel53}. 

\medskip

The cohomology of Stiefel manifolds $\Stief(r,n)$ turns out to be a truncated version 
of the cohomology of $U(n)$:
\begin{equation}
  H^\bullet( \Stief(r,n)) \cong \bigwedge F_{r}[2r-2n]. 
  \label{eqn:CohomologyStiefelManifold}
\end{equation}
In order to make the connection with Formula (\ref{eqn:BorelComplex}) recall 
that $\Stief(r,n) \cong U(n)/(\one^r \oplus U(n-r))$. As maximal tori in 
the subgroup $U' := \one^r \oplus U(n-r)$ we may choose 
\begin{eqnarray*}
  T' &=&  \{ \one^r \oplus \mathrm{diag}(\mu_{1},\dots,\mu_{n-r}) : \mu_j \in U(1) \}
\end{eqnarray*}
so that $T'\subset T$ with $T \subset U(n)$ as before. Writing 
$S_{T'} = \ZZ[\beta_1,\dots,\beta_{n-r}]$ for the cohomology of the classifying space 
we find that the diagram (\ref{eqn:InducedMapOnInvariantSubrings}) becomes 
\[
  \xymatrix{
    \ZZ[\alpha_1,\dots,\alpha_n]^{\mathfrak S_n} \ar[r]^-{\rho(U',U)} \ar@{^{(}->}[d] &
    \ZZ[\beta_1,\dots,\beta_{n-r}]^{\mathfrak S_{n-r}} \ar@{^{(}->}[d] \\
    \ZZ[\alpha_1,\dots,\alpha_n] \ar[r]^-{\rho(T',T)} & 
    \ZZ[\beta_1,\dots,\beta_{n-r}]
  }
\]
where the map $\rho(T',T)$ is given by 
\[
  \rho(T',T) \colon \alpha_j \mapsto
  \begin{cases}
    \beta_{j-r} & \textnormal{ if } j>r, \\
    0 & \textnormal{ otherwise. }
  \end{cases}
\]
It follows that $\rho(U',U)$ is merely a substitution of the 
variables in the symmetric functions given by $\rho(T',T)$ so that 
\[
  \rho(U',U) \colon \sigma_k^n(\alpha_1,\dots,\alpha_n) \mapsto
  \begin{cases}
    \sigma_k^{n-r}(\beta_1,\dots,\beta_{n-r}) & \textnormal{ if } k \leq n-r, \\
    0 & \textnormal{ otherwise.}
  \end{cases}
\]
With these considerations at hand we can now investigate the 
homology of the complex $S_{U'} \otimes_\ZZ \bigwedge F_n$, 
with its transgression differential $D$ from 
(\ref{eqn:BorelComplex}). To this end 
we shall apply the following well known reduction lemma.

\begin{lemma}
  Let $R$ be a ring, $M$ an $R$-module and $x,y_1,\dots,y_n \in R$ 
  be elements. When $x$ is a non-zerodivisor on $M$, then there 
  is a canonical isomorphism 
  \[
    H( \Kosz(x,y_1,\dots,y_n;M)) \cong H( \Kosz(y_1,\dots,y_n;M/xM)
  \]
  for the entire cohomology of the Koszul complexes.
  \label{lem:KoszulComplexAndRegularElements}
\end{lemma}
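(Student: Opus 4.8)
The plan is to realize both sides as the homology of one and the same bounded double complex. The starting point is the standard factorization of the Koszul complex of a concatenated sequence together with the identity $\Kosz(x,y_1,\dots,y_n;M) = \Kosz(x,y_1,\dots,y_n;R)\otimes_R M$: writing $\Kosz(x;M)$ for the two-term complex $M\xrightarrow{\ x\ }M$ (placed in homological degrees $1$ and $0$) and $K_\bullet := \Kosz(y_1,\dots,y_n;R)$ for the Koszul complex on the $y_i$ with coefficients in $R$, one has a canonical isomorphism of complexes
\[
  \Kosz(x,y_1,\dots,y_n;M) \;\cong\; \Kosz(x;M)\otimes_R K_\bullet .
\]
Here $K_\bullet$ is a bounded complex of finite free $R$-modules, $K_p \cong R^{\binom{n}{p}}$.

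I would then view $\Kosz(x;M)\otimes_R K_\bullet$ as a bounded double complex and run the spectral sequence obtained by first taking homology in the $\Kosz(x;M)$-direction. Since each $K_p$ is free, the functor $-\otimes_R K_p$ is exact, so this vertical homology is just the homology of $M\otimes_R K_p \xrightarrow{\ x\ } M\otimes_R K_p$. As $x$ is a non-zerodivisor on $M$ and $K_p$ is free, $x$ is a non-zerodivisor on $M\otimes_R K_p\cong M^{\oplus\binom{n}{p}}$; hence this homology is concentrated in the degree-$0$ spot, where it equals $(M/xM)\otimes_R K_p$. Consequently the first page of the spectral sequence is concentrated in a single row, namely the complex $(M/xM)\otimes_R K_\bullet = \Kosz(y_1,\dots,y_n;M/xM)$ with its Koszul differential; the spectral sequence therefore degenerates and yields, in every degree, an isomorphism
\[
  H\big(\Kosz(x,y_1,\dots,y_n;M)\big)\;\cong\;H\big(\Kosz(y_1,\dots,y_n;M/xM)\big).
\]
The isomorphism is canonical since it is the edge homomorphism of a functorial spectral sequence. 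Equivalently, and perhaps more transparently, one can argue that the natural surjection $\Kosz(x;M)\to M/xM$ is a quasi-isomorphism of bounded complexes precisely because $x$ is a non-zerodivisor on $M$, and that tensoring with the bounded complex of free modules $K_\bullet$ preserves quasi-isomorphisms; the induced map on homology is then the asserted canonical isomorphism.

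I do not expect a genuine obstacle here. The only step that deserves a moment's attention is the computation of the vertical homology: one must use that freeness of the $K_p$ both lets the (exact) functor $-\otimes_R K_p$ be applied termwise and preserves the non-zerodivisor hypothesis on $M$. Once this is in place, the collapse of the spectral sequence --- or, said plainly, the elementary fact that a bounded double complex whose homology in one direction is concentrated in a single row computes the homology of that row --- is immediate, and the naturality of the resulting isomorphism is visible from the construction. It is worth noting that the more naive approach via the long exact sequence associated to $\Kosz(x;M)$ being an extension of $M$ by $M[1]$ would force the connecting map to be multiplication by $x$ on $H\big(\Kosz(\underline y;M)\big)$, where $x$ need not be a non-zerodivisor; taking $\Kosz(x)$-homology \emph{first}, with free coefficients, is exactly what circumvents this.
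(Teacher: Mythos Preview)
Your argument is correct. The paper itself does not give a proof at all: it simply cites \cite[Corollary 1.6.13 (b)]{BrunsHerzog93}. Your double complex / spectral sequence argument is precisely the standard proof underlying that reference (and your closing remark about why one must take homology in the $\Kosz(x)$-direction first, rather than use the mapping-cone long exact sequence directly, is exactly the point). So you have supplied a complete and correct proof where the paper only gives a citation.
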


\begin{proof}
  See \cite[Corollary 1.6.13 (b)]{BrunsHerzog93}.
\end{proof}

The ring $S_{U'} \cong \ZZ[c_1,\dots,c_{n-r}]$ is 
freely generated in the elementary symmetric functions in 
the $\beta_1,\dots,\beta_{n-r}$. 
Now the transgression differential $D$ takes the form 
\[
  D \colon 1 \otimes \varepsilon_i \mapsto
  \begin{cases}
    c_i & \textnormal{ for } i \leq n-r \\
    0 & \textnormal{ otherwise. }
  \end{cases}
\]
Since the $c_i$ form a regular sequence on the module $M = S_{U'}$ 
with quotient $S_{U'}/\langle c_1,\dots,c_{n-r} \rangle \cong \ZZ$, 
it follows inductively from Lemma \ref{lem:KoszulComplexAndRegularElements} 
that 
\[
  H(\Stief(r,n)) \cong 
  H\left( S_{U'} \otimes_{\ZZ} \bigwedge F_n \right) \cong 
  H\left( \Kosz(0,\dots,0;\ZZ \right) \cong \bigwedge F_r[2r-2n]
\]
as anticipated.

\medskip

Let $0 \leq r \leq n$ be integers.
The case of a Grassmannian
\[
  \Grass(r,n) \cong U(n) / (U(r) \oplus U(n-r))
\]
is similar, only that the maximal tori for $U(n)$ and its subgroup 
$U' = U(r) \oplus U(n-r)$
can be chosen to be the same so that $S_T = S_{T'} = \ZZ[\alpha_1,\dots,\alpha_n]$. 
The difference comes from the Weyl groups. 
For $U(n)$ we again have the full symmetric group $\mathfrak S_n$, but 
for the subgroup $U'$ we find 
\[
  \mathfrak S_r \times \mathfrak S_{n-r} \subset \mathfrak S_n
\]
whose action on $\ZZ[\alpha_1,\dots,\alpha_n]$ respects the partion 
of the variables $\alpha_j$ into subsets 
$\{ \alpha_1,\dots,\alpha_{r}\}$ and $\{ \alpha_{n-r+1}, \dots,\alpha_{n} \}$.
We write 
\[
  x_j = \sigma^r_j(\alpha_1,\dots,\alpha_r),\quad
  y_k = \sigma^{n-r}_k(\alpha_{n-r+1},\dots,\alpha_n)
\]
for the elementary symmetric polynomials in the respective 
set of variables. Then $S_{U'}$
is a free polynomial subring in these variables
\[
  S_{U'} \cong \ZZ[x_j,y_k : 0< j \leq r, \, 0 < k \leq n-r ] \subset \ZZ[\alpha_1,\dots,\alpha_n]
\]
containing $S_U$ as the invariant subring under \textit{arbitrary} permutations, 
i.e. forgetting 
about the particular partition. It is an elementary exercise in the 
theory of symmetric functions to verify that in this situation 
\begin{equation}
  \sigma_d^n( \underline \alpha ) =
  \sum_{j+k = d} \sigma_j^r(\alpha_1,\dots,\alpha_r) \cdot 
  \sigma_k^{n-r}(\alpha_{n-r+1},\dots,\alpha_n) 
  = \sum_{j+k = d} x_j \cdot y_k.
    \label{eqn:ElementarySymmetricFunctionsAndPartitions}
\end{equation}
Now the complex $S_{U'} \otimes_\ZZ \bigwedge F_n$ in (\ref{eqn:BorelComplex}) looks 
as follows, cf. \cite[Proposition 31.1]{Borel53}:
The differential $D$ takes each one of the generators $\varepsilon_k$ to 
$D(1\otimes \varepsilon_k) = \sigma_k^n(\alpha_1,\dots,\alpha_n)$. 
These are $n$ weighted homogeneous relations in a free graded polynomial ring 
with $n$ variables $x_1,\dots,x_r,y_1,\dots,y_{n-r}$. 
From the fact that the cohomology of the associated 
Koszul complex is finite dimensional, we see that the elements 
(\ref{eqn:ElementarySymmetricFunctionsAndPartitions}) must form a regular 
sequence on $S_{U'}$ so that the 
complex is exact except at Koszul degree zero where we find 
\begin{equation}
  H^\bullet(\Grass(r,n)) \cong S_{U'}/I_{r,n}
  \label{eqn:CohomologyFlagVariety}
\end{equation}
with $I_{r,n}$ the weighted homogeneoeus ideal generated by the elements 
(\ref{eqn:ElementarySymmetricFunctionsAndPartitions}).

\begin{remark}
  \label{rem:CohomologyOfFlagVarietyAsChernClasses}
  This model for the cohomology is linked to the geometry of the Grassmannians 
  as follows. Let 
  \[
    0 \to \mathcal S \to \OO^n \to \mathcal Q \to 0
  \]
  be the tautological sequence on $\Grass(r,n)$. 
  Then modulo $I_n$ we find that 
  $c_j = c_j(\mathcal S)$ is the $j$-th Chern class 
  of the tautological bundle and $s_k = c_k(\mathcal Q)$ 
  the $k$-th Chern class of the tautological quotient bundle. 
  The latter are nothing but the $k$-th Segre classes of $\mathcal S$ 
  which gives us precisely the relations 
  (\ref{eqn:ElementarySymmetricFunctionsAndPartitions}) 
  by expansion of the product of total Chern classes
  \[
    c_t (\mathcal S) \cdot c_t (\mathcal Q) = c(\OO^{n}) = 1
  \]
  in all degrees.
\end{remark}

This cohomological model can be simplified further. 
Let $x_0 = 1, x_1,\dots,x_r$ be the Chern classes of the tautological subbundle and 
$y_0 = 1, y_1,\dots,y_{n-r}$ those of the tautological quotient bundle. 
The relations given by the images $D (1 \otimes \varepsilon_i)$, i.e. 
the generators of the ideal $I_{r,n}$ are 
\[
  \begin{matrix}
    0 = & x_1 & + & y_1 \\
    0 = & x_2 & + & x_1 \cdot y_1 & + & y_2 \\
    \vdots   & \vdots & & \vdots & & & \ddots \\
    0 = & x_r & + & x_{r-1} \cdot y_1 & + & \dots & + & y_r \\
    0 = & x_r \cdot y_1 & + & x_{r-1} \cdot y_2 & + & \dots & + & y_{r+1} \\
    \vdots   & \vdots & & \vdots & & & & \vdots \\
    0 = & x_{r} \cdot y_{n-2r} & + & x_{r-1} \cdot y_{n-2r+1} & + & \dots & + & y_{n-r} \\
    0 = & x_{r} \cdot y_{n-2r-1} & + & x_{r-1} \cdot y_{n-2r} & + & \dots & + & x_1 \cdot y_{n-r} \\
    0 = & x_{r} \cdot y_{n-2r} & + & x_{r-1} \cdot y_{n-2r+1} & + & \dots & + & x_2 \cdot y_{n-r} \\
    \vdots   & & \ddots & & \ddots & & & \vdots \\
    0 = & & & x_r \cdot y_{n-r-2} & + & x_{r-1} \cdot y_{n-r-1} & + & x_{r-2} \cdot y_{n-r} \\
    0 = & & & & & x_r \cdot y_{n-r-1} & + & x_{r-1} \cdot y_{n-r} \\
    0 = & & & & & & & x_r \cdot y_{n-r}.
  \end{matrix}
\]
The first $n-r$ equations can be used to eliminate all of the $y$-variables and express 
them in terms of $x$. Substituting these into the last $r$ equations we obtain polynomials 
in $x$ which we denote by 
\[
  h_1^{(n)}, h_2^{(n)}, \dots, h_r^{(n)}.
\]
If we let $J_{r,n} = \left\langle h_1^{(n)}, h_2^{(n)}, \dots, h_r^{(n)} \right\rangle 
\subset \ZZ[x_1,\dots,x_r]$ 
be the ideal generated by these elements then 
\begin{equation}
  H^\bullet( \Grass(r,n)) \cong \ZZ[x_1,\dots,x_r]/J_{r,n}.
  \label{eqn:SimplifiedGrassmannianCohomology}
\end{equation}
Moreover, the polynomials $h_k^{(n)}$ satisfy a recurrence relation 
that can easily be derived from their explicit construction.
Writing them in a vector we have 
\[
  \begin{pmatrix}
    h_1^{(0)} & h_2^{(0)} & \dots & h_r^{(0)} 
  \end{pmatrix}^T
  = 
  \begin{pmatrix}
    x_1 & x_2 & \dots & x_r
  \end{pmatrix}^T
\]
and 
\begin{equation}
  \begin{pmatrix}
    h_1^{(n+1)} \\
    h_2^{(n+1)} \\
    \vdots \\
    h_{r-1}^{(n+1)} \\
    h_r^{(n+1)} 
  \end{pmatrix}
  = 
  \begin{pmatrix}
    -x_1 & 1 & 0 & \cdots & 0 \\
    -x_2 & 0 & 1 & \ddots & \vdots \\
    \vdots & \vdots & \ddots & \ddots & 0 \\
    -x_{r-1} & 0 & \cdots & 0 & 1 \\
    -x_r & 0 & 0 & \cdots & 0
  \end{pmatrix}
  \cdot
  \begin{pmatrix}
    h_1^{(n)} \\
    h_2^{(n)} \\
    \vdots \\
    h_{r-1}^{(n)} \\
    h_r^{(n)} 
  \end{pmatrix}
  \label{eqn:RecursionForGrassmannRelationsInCohomology}
\end{equation}
Comparing this with the construction of the infinite Grassmannian 
$\Grass(r,\infty) = \bigcup_{n=r}^{\infty} \Grass(r,n)$ we 
see the following: Since the cohomology ring of $\Grass(r,n)$ is generated 
by the Chern classes $x_1,\dots,x_r$ of the tautological bundle 
for \textit{every} $n$ and the tautological bundle on $\Grass(r,n+1)$ restricts 
to the one on $\Grass(r,n)$, the pullback in cohomology for the 
inclusion $\Grass(r,n) \hookrightarrow \Grass(r,n+1)$ is given by 
\[
  \ZZ[x_1,\dots,x_r]/J_{r,n+1} \to \ZZ[x_1,\dots,x_r]/J_{r,n}
\]
where the containment of ideals $J_{r,n+1} \subset J_{r,n}$ is 
confirmed by the recurrence relation (\ref{eqn:RecursionForGrassmannRelationsInCohomology}) 
above.

\subsection{Cohomology of the matrix orbits}

This section will entirely consist of the proof of the following:

\begin{proposition}
  \label{prp:CohomologyOfTheOrbitVarieties}
  Let $0<r\leq  m\leq n$ be integers. The cohomology of the 
  orbit variety $O_{m,n}^{r}$ is graded isomorphic to 
  the Koszul algebra 
  \[
    H^\bullet\left( O_{m,n}^{r}\right) \cong 
    \bigwedge \left( R_m^{r} \cdot \eta_1 \oplus \dots \oplus 
    R_{m}^{r} \cdot \eta_r \right)
  \]
  over the ring $R_{m}^{r} = H^\bullet( \Grass(r,m))$ 
  with each $\eta_j$ a free generator of degree $2n-2j+1$.
\end{proposition}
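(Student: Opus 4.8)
The plan is to feed the homogeneous-space presentation $O_{m,n}^{r}\cong U_{m,n}/U_{m,n}^{r}$ from the previous subsections, with $G=U_{m,n}=U(m)\times U(n)$ and $U=U_{m,n}^{r}\cong U(r)\times U(m-r)\times U(n-r)$ by (\ref{eqn:SplittingOfStabilizerOneOrbit}), into the Cartan model (\ref{eqn:BorelComplex}); its hypotheses hold over $\ZZ$ because all the groups involved are products of unitary groups, with torsion-free cohomology generated by universally transgressive classes. By the Künneth formula and (\ref{eqn:CohomologyUN}), $\bigwedge F=H^\bullet(U(m))\otimes H^\bullet(U(n))$ is the exterior algebra on generators $\varepsilon_1,\dots,\varepsilon_m$ of degrees $1,3,\dots,2m-1$ and $\delta_1,\dots,\delta_n$ of degrees $1,3,\dots,2n-1$, while $S_U\cong\ZZ[x_1,\dots,x_r]\otimes\ZZ[y_1,\dots,y_{m-r}]\otimes\ZZ[z_1,\dots,z_{n-r}]$ with $x_i,y_j,z_k$ the Chern classes of the tautological bundles of $BU(r),BU(m-r),BU(n-r)$. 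The first step is to compute the characteristic homomorphism $\rho\colon S_G\to S_U$: choosing compatible maximal tori as in (\ref{eqn:InducedMapOnInvariantSubrings}) and reading the torus embedding off the block form $(S,P,Q)\mapsto(S\oplus P,\,S\oplus Q)$ of the elements of $U_{m,n}^r$, one finds — crucially, because the $U(r)$-factor sits \emph{diagonally}, so the Chern roots of $S$ feed into both factors of $U_{m,n}$ — that
\[
  D(1\otimes\varepsilon_k)=\sum_{i+j=k}x_i y_j,\qquad
  D(1\otimes\delta_k)=\sum_{i+j=k}x_i z_j
\]
(with the conventions $x_0=y_0=z_0=1$, $x_i=0$ for $i>r$, and so on), exactly the relations appearing in the computation of $H^\bullet(\Grass(r,m))$ resp.\ $H^\bullet(\Grass(r,n))$ in Section~\ref{sec:StiefelManifoldsAndFlagVarieties}. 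Thus the Cartan complex is $\Kosz\!\big(\{\textstyle\sum x_iy_j\}_{1\le k\le m},\{\textstyle\sum x_iz_j\}_{1\le k\le n};\,S_U\big)$.

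I would then peel this Koszul complex in two stages via Lemma~\ref{lem:KoszulComplexAndRegularElements}. The elements $\sum_{i+j=k}x_iy_j$, $1\le k\le m$, involve only $x$ and $y$ and generate the ideal $I_{r,m}$ with $\ZZ[x,y]/I_{r,m}=H^\bullet(\Grass(r,m))=R_m^r$; by the finiteness argument recalled in Section~\ref{sec:StiefelManifoldsAndFlagVarieties} they form a regular sequence on $\ZZ[x,y]$, hence also on $S_U=\ZZ[x,y][z_1,\dots,z_{n-r}]$, with quotient $R_m^r[z_1,\dots,z_{n-r}]$. Applying Lemma~\ref{lem:KoszulComplexAndRegularElements} $m$ times removes $\varepsilon_1,\dots,\varepsilon_m$ and leaves $\Kosz\!\big(\{\overline{\textstyle\sum x_iz_j}\}_{1\le k\le n};\,R_m^r[z_1,\dots,z_{n-r}]\big)$. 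For $k\le n-r$ the element $\overline{\sum_{i+j=k}x_iz_j}=z_k+x_1z_{k-1}+\dots$ is monic of degree one in $z_k$, so the first $n-r$ of these elements form a regular sequence on $R_m^r[z_1,\dots,z_{n-r}]$ that successively eliminates $z_1,\dots,z_{n-r}$ and has quotient $R_m^r$ (this stage being vacuous when $r=n$). Applying the lemma another $n-r$ times removes $\delta_1,\dots,\delta_{n-r}$ and leaves $\Kosz(g_1,\dots,g_r;R_m^r)$, where $g_j\in R_m^r$ (of degree $2(n-r+j)$) is the image of $\overline{\sum x_iz_j}$ for $k=n-r+j$ after the elimination. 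By construction this elimination is the very one producing the generators $h_1^{(n)},\dots,h_r^{(n)}$ of $J_{r,n}$ in (\ref{eqn:SimplifiedGrassmannianCohomology})--(\ref{eqn:RecursionForGrassmannRelationsInCohomology}), so $g_j$ is the image of $h_j^{(n)}$ under $\ZZ[x]\twoheadrightarrow\ZZ[x]/J_{r,m}=R_m^r$.

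Here the hypothesis $m\le n$ enters: iterating the containment $J_{r,n+1}\subseteq J_{r,n}$ from (\ref{eqn:RecursionForGrassmannRelationsInCohomology}) gives $J_{r,n}\subseteq J_{r,m}$, so every $g_j$ vanishes in $R_m^r$. Hence $\Kosz(g_1,\dots,g_r;R_m^r)=\Kosz(0,\dots,0;R_m^r)$ has zero differential, and its cohomology — which, through the two chains of applications of Lemma~\ref{lem:KoszulComplexAndRegularElements}, computes $H^\bullet(O_{m,n}^r)$ — is the free exterior algebra over $R_m^r$ on the $r$ surviving Koszul generators, i.e.\ on the classes of $\delta_{n-r+1},\dots,\delta_n$, of degrees $2(n-r+1)-1,\dots,2n-1$. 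Setting $\eta_j$ to be the class of $\delta_{n+1-j}$ for $j=1,\dots,r$ gives $\deg\eta_j=2(n+1-j)-1=2n-2j+1$, as required; and since the isomorphism of Lemma~\ref{lem:KoszulComplexAndRegularElements} is multiplicative, this identification is an isomorphism of graded algebras with the asserted Koszul algebra $\bigwedge\!\big(R_m^r\cdot\eta_1\oplus\dots\oplus R_m^r\cdot\eta_r\big)$ over $R_m^r=H^\bullet(\Grass(r,m))$.

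The step I expect to be most delicate is the first one: pinning down the characteristic homomorphism, and in particular noticing that the diagonal placement of the $U(r)$-factor forces the two families of transgression relations to share the variables $x_1,\dots,x_r$ — this is precisely what makes the $\delta$-relations redundant modulo the $\varepsilon$-relations once $m\le n$, and hence what makes $\Grass(r,m)$ (rather than some fibre product of Grassmannians) the base ring. Everything after that is the same symmetric-function and Koszul-complex bookkeeping already carried out for Stiefel manifolds and Grassmannians, supplemented by the nesting $J_{r,n}\subseteq J_{r,m}$.
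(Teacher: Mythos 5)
Your proposal is correct and follows essentially the same strategy as the paper: compute the characteristic homomorphism via the diagonal torus embedding, identify the transgression differentials with the Grassmannian relations, and peel the Cartan complex using the Koszul reduction lemma together with the nesting $J_{r,n}\subseteq J_{r,m}$. The one stylistic difference is the order of peeling: the paper first kills $\varepsilon_1,\dots,\varepsilon_{m-r},\varepsilon'_1,\dots,\varepsilon'_{n-r}$ (eliminating $y$ and $z$ simultaneously, leaving $\ZZ[x]$) and only then brings in $\varepsilon_{m-r+1},\dots,\varepsilon_m$ to pass to $\ZZ[x]/J_{r,m}$; you instead dispatch all of $\varepsilon_1,\dots,\varepsilon_m$ at once, producing $R_m^r=H^\bullet(\Grass(r,m))$ as a visible intermediate quotient before touching the $z$-variables, which makes the emergence of the base ring a bit more transparent. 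Your normalization $\eta_j=$ class of $\delta_{n+1-j}$ also matches the degree convention $\deg\eta_j=2n-2j+1$ in the statement exactly, whereas the paper's choice $\eta_j=1\otimes\varepsilon'_{n-r+j}$ lists the same set of generators but with the index reversed.
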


We use the Cartan model (\ref{eqn:BorelComplex}) for  
$O_{m,n}^{r}$. The 
group in question is $U_{m,n} = U(m) \times U(n)$ with the subgroup 
\begin{eqnarray*}
  U_{m,n}^{r} &:=& \Stab(\one_{m,n}^{r}) \\
  &=& \left\{ (S \oplus P, \,
  S \oplus Q ) \in U(m)\times U(n)\right\}\\
  &\cong& U(r) \times 
  U(m-r) \times U(n-r).
\end{eqnarray*}
As a maximal torus of this subgroup we choose pairs of diagonal block matrices
\[
  \left( \diag(\underline \lambda) \oplus \diag(\underline \mu), \quad
  \diag(\underline \lambda) \oplus \diag(\underline \nu) 
  \right)
\]
with all nontrivial entries 
$\lambda_1,\dots,\lambda_{r},\mu_1,\dots,\mu_{m-r}, \nu_1,\dots,\nu_{n-r} \in U(1)$.
This is contained in the maximal torus $T$ of $U_{m,n}$ in the obvious way. 

We will write $S_{m,n}^{r}$ for the ring $H^\bullet(BU_{m,n}^{r})$. 
If we let $\{\alpha_j\}_{j=1}^{r}$ be the set of Chern roots associated to the 
subgroup $U(r)$, 
$\{\beta_j\}_{j=1}^{m-r}$ the ones for $U(m-r)$ and $\{\gamma_j\}_{j=1}^{n-r}$
those for $U(n-r)$, then similar to the case of flag 
varieties, the ring $S_{m,n}^{r}$ is the invariant 
subring of $\ZZ[\underline \alpha, \underline \beta, \underline \gamma]$ 
by the action of the group 
\[
  \mathfrak S_{m,n}^{r}:= 
  \mathfrak S_{r} 
  \times \mathfrak S_{m-r} \times \mathfrak S_{n-r}
\]
acting by the permutation of the distinct sets of variables. 
We set 
\begin{eqnarray*}
  x_j &=& \sigma_j^r(\alpha_1,\dots,\alpha_{r}) \quad 
  \textnormal{ for } j=1,\dots,r,\\
  y_k &=& \sigma_k^{m-r}(\beta_1,\dots,\beta_{m-r}) \quad 
  \textnormal{ for } k=1,\dots,m-r, \\
  z_l &=&  \sigma_l^{n-r}(\gamma_1,\dots,\gamma_{n-r}) \quad 
  \textnormal{ for } l=1,\dots,n-r 
\end{eqnarray*}
so that $S_{m,n}^{i_1,\dots,i_p} \cong \ZZ[\underline c, \underline y, \underline z]$ 
is a free polynomial ring containing $\ZZ[\underline x,\underline y, \underline z]$ 
as another free polynomial subring.

Since the group $U_{m,n} = U(m) \times U(n)$ is a product, also the exterior algebra 
in (\ref{eqn:BorelComplex}) takes the form of a product: 
\[
  \bigwedge (F_m \oplus F_n) \cong \left( \bigwedge F_m\right) \otimes_\ZZ 
  \left( \bigwedge F_n \right).
\]
We let $\{\varepsilon_j\}_{j=1}^m$ be the generators of $F_m$ and 
$\{\varepsilon'_k\}_{k=1}^n$ those 
of $F_n$. With the notation above it is easy to see from the 
inclusion of maximal tori that the differential $D$ of 
(\ref{eqn:BorelComplex}) takes these generators to the elements 
\begin{eqnarray*}
  D(1 \otimes \varepsilon_j) &=&  \sum_{r+s=j} x_r \cdot y_s,\\
  D(1\otimes \varepsilon'_k) &=&  \sum_{r+s=k} x_r \cdot z_s
\end{eqnarray*}
in $S_{m,n}^{i_1,\dots,i_p}$ and, as already discussed 
in Remark \ref{rem:CohomologyOfFlagVarietyAsChernClasses}, 
these are precisely the relations between the Chern 
classes of the tautological sub- and quotient bundles. 

We now consider Koszul complexes on $S_{m,n}^{i_1,\dots,i_p}$
associated to various 
subsets of the generators $D(1\otimes \varepsilon_j)$ 
and $D(1\otimes \varepsilon'_k)$. 
As discussed earlier in Remark \ref{rem:CohomologyOfFlagVarietyAsChernClasses}
the elements 
\[
  D(1\otimes \varepsilon_1), \dots
  D(1\otimes \varepsilon_{m-r}), 
  D(1\otimes \varepsilon'_1), \dots
  D(1\otimes \varepsilon'_{n-r}) 
\]
form a regular sequence on $\ZZ[\underline x, \underline y, \underline z]$ 
that can be used to eliminate the variables $\underline y$ and 
$\underline z$. Using the reduction lemma for 
the homology of Koszul complexes, Lemma \ref{lem:KoszulComplexAndRegularElements}, 
this reduces the problem to a Koszul complex on the ring 
\[
  \frac{\ZZ[x_1,\dots,x_r,y_1,\dots,y_{m-r},z_1,\dots,z_{n-r}]}
  {\langle D(1\otimes \varepsilon_1), \dots, D(1\otimes \varepsilon_{m-r}),
  D(1\otimes \varepsilon'_1), \dots, 
  D(1\otimes \varepsilon'_{n-r}) \rangle}
  \cong
  \ZZ[x_1,\dots,x_r] 
\]
In this quotient, the next $r$ relations reduce to 
\[
  \overline{D(1\otimes \varepsilon_{m-r+1})}
  = h^{(m)}_{1},\quad \dots, \quad 
  \overline{D(1\otimes \varepsilon_{m})}
  = h^{(m)}_r
\]
and they form another regular sequence with successive quotient 
$H^\bullet(\Grass(r,n))= \ZZ[x_1,\dots,x_r]/J_{r,n}$ as in 
Remark \ref{rem:CohomologyOfFlagVarietyAsChernClasses}. Consequently, 
the last $r$ relations 
\[
  \overline{D(1\otimes \varepsilon'_{n-r+1})}
  = h^{(n)}_1, \quad \dots,\quad 
  \overline{D(1\otimes \varepsilon'_{n})}
  = h^{(n)}_{r}
\]
all reduce to zero in $H^{\bullet}(\Grass(r,m))$ due to 
(\ref{eqn:RecursionForGrassmannRelationsInCohomology}) since by 
assumption $m \leq n$. 
In terms of the identification of the homology of Koszul complexes 
from Lemma \ref{lem:KoszulComplexAndRegularElements} this means 
\begin{eqnarray*}
  & & H\left( \ZZ[x,y,z] \otimes_{\ZZ} \bigwedge \left( F_m \oplus F_n \right) \right)\\
  &\cong& H\left( \ZZ[x] \otimes_{\ZZ} \bigwedge \left( \ZZ \varepsilon_{m-r+1} \oplus \dots 
  \oplus \ZZ \varepsilon_m \oplus 
  \ZZ \varepsilon'_{n-r+1} \oplus \dots \oplus \ZZ \varepsilon'_n \right) \right)\\
  & \cong & 
  H\left( \ZZ[x]/J_{m,r} \otimes_{\ZZ} \bigwedge \left( 
  \ZZ \varepsilon_{n-r+1}' \oplus \dots \oplus \ZZ \varepsilon'_n\right)\right)\\
  & \cong & 
  \ZZ[x]/J_{m,r} \otimes_{\ZZ} \bigwedge \left( 
  \ZZ \varepsilon_{n-r+1}' \oplus \dots \oplus \ZZ \varepsilon'_n\right)\\
\end{eqnarray*}
where the differentials of the complex in the second last line are all zero 
so that we may drop the homology functor $H(-)$. 
To finish the proof of Proposition 
\ref{prp:CohomologyOfTheOrbitVarieties} we now set $\eta_j$ to be equal to 
$1 \otimes \varepsilon'_{n-r+j}$ for $j=1,\dots,r$ in the last line.

\begin{remark}
  \label{rem:FiberBundleStructureIsCohomologicallyTrivial}
  We review the fiber bundle structure 
  \[
    \Stief(r,n) \hookrightarrow O_{m,n}^r \overset{\lambda_1}{\longrightarrow} \Grass(r,m)
  \]
  described in Lemma \ref{lem:FiberBundleStructureOfROrbitOverGrassmannian}. 
  Given the explicit descriptions of the cohomology of both the 
  Stiefel manifold and the Grassmannian, we may infer from the 
  proof of Proposition \ref{prp:CohomologyOfTheOrbitVarieties} that, 
  similar to the case the Hopf's theorem, the cohomology of 
  $O_{m,n}^r$ is isomorphic to that of a product 
  \[
    H^\bullet( O_{m,n}^r) \cong H^\bullet (\Stief(r,n)\times \Grass(r,m)),
  \]
  despite the fact that the structure of $O_{m,n}^r$ as a fiber bundle is in 
  general non-trivial. By construction, the cohomology classes $\eta_j$ restrict to the 
  generators of the cohomology of $\Stief(r,n)$ in every fiber.
  Yet it seems difficult to write down an explicit lift of these elements 
  to the original complex $\ZZ[x,y,z]\otimes \bigwedge \left( F_m \oplus F_n \right)$.
\end{remark}

\begin{remark}
  It would be nice to have a more geometric understanding of the elements 
  $\eta_j$ that might, for example, be derived from the tautological sequence
  \begin{equation}
    \xymatrix{
      0 \ar[r] & 
      \lambda_2^* Q_{2}^\vee \ar[r] & 
      \OO^n \ar[r]^\varphi & 
      \OO^m \ar[r] & 
      \lambda_1^* Q_1 \ar[r] & 
      0
    }
    \label{eqn:TautologicalSequenceOnTheOrbitVariety}
  \end{equation}
  on $O_{m,n}^{r}$ where the $Q_i$ denote the tautological quotient bundles on 
  respective Grassmannians for the projections $\lambda_1$ and 
  $\lambda_2$ as in Lemma \ref{lem:FiberBundleStructureOfROrbitOverGrassmannian}
  and $\varphi$ denotes the \textit{tautological section} 
  \[
    O_{m,n}^{r} \to \Hom( \OO^n,\OO^m ), \quad 
    \varphi \mapsto \varphi
  \]
  on $O_{m,n}^{r}$, identifying the 
  subbundles
  $\lambda_2^* S_2^\vee$ with $\lambda_1^* S_1$.
\end{remark}

\section{Betti numbers of smooth links} 
\label{sec:BettiNumbersOfSmoothLinks}

A hyperplane $(D_i,0) \subset (\CC^{m \times n},0)$ of codimension $i$ 
in general position will 
intersect $M_{m,n}^{r+1}$ in a non-trivial way 
whenever $i \leq \dim M_{m,n}^{r+1}$. 
This intersection $(M_{m,n}^{r+1} \cap D_i,0)$ will have isolated 
singularity as long as $D_i$ meets the singular locus 
$M_{m,n}^{r}$ of $M_{m,n}^{r+1}$ only at the origin, i.e. 
$i\geq \dim M_{m,n}^{r}$. 
The real and complex links of codimension $i$ of 
$(M_{m,n}^{r+1},0)$ will therefore be smooth within the range 
\[
  (m+n)(r-1)-(r-1)^2 \leq i < (m+n)r-r^2.
\]
In this section we will be concerned with the cohomology 
of $\mathcal K^i(M_{m,n}^{r+1},0)$ and $\mathcal L^i(M_{m,n}^{r+1},0)$ 
for $i$ in this range. 
More precisely, we will show for that for $i$ in the above range 
\begin{eqnarray}
  H^k(\Grass(r,m)) &\cong& H^k(\mathcal K^i(M_{m,n}^{r+1},0)) \quad 
  \textnormal{ for } \quad k<d(i),
  \label{eqn:CohomologyOfSmoothRealLinksBelowMiddle}
  \\
  H^k(\Grass(r,m)) &\cong& 
  H^k(\mathcal L^i(M_{m,n}^{r+1},0)) \quad 
  \textnormal{ for } \quad k<d(i), 
  \label{eqn:CohomologyOfSmoothComplexLinksBelowMiddle}
  \\
  H^{d(i)}(\Grass(r,m)) &\subset& 
  H^{d(i)}(\mathcal K^i(M_{m,n}^{r+1},0)) \subset 
  H^{d(i)}(\mathcal L^i(M_{m,n}^{r+1},0))
  \label{eqn:CohomologyOfSmoothComplexLinksMiddle}
\end{eqnarray}
where $d(i) = \dim_\CC \mathcal L^i(M_{m,n}^{r+1},0) = (m+n)r-r^2-i-1$. 
The left hand sides all agree with the cohomology of $V_{m,n}^r$ 
in this range for $k$ 
and the above maps are given by the pullback in cohomology 
for the natural inclusions of the 
real and complex links into that stratum. 

For the complex links the middle Betti number can then be computed 
from the polar multiplicities using Formula (\ref{eqn:FormulaForEulerCharacteristic}). 
In case $r=1$ this gives a complete description of the classical real and complex 
links of $(M_{m,n}^2,0)$, see Remark \ref{rem:MiddleCohomologyForRank1}.

However, besides the case $i=0$ and $r=1$, we are unable to determine 
whether or not $H^{d(i)}(\mathcal L^i(M_{m,n}^{r+1},0))$ has torsion. 
Also, the two middle cohomology groups of $\mathcal K^i(M_{m,n}^{r+1},0)$ can not 
be computed by our methods for $i>0$.

\subsection{The variation sequence on stratified spaces}
\label{sec:VariationSequence}

Throughout this section, let $(X,0) \subset (\CC^q,0)$ 
be an equidimensional reduced complex analytic 
germ of complex dimension $d$, endowed with a complex analytic 
Whitney stratification $\{ V^\alpha \}_{\alpha \in A}$.
We will denote by $f \colon (\CC^n,0) \to (\CC,0)$ the germ of a holomorphic 
function with an isolated singularity on $(X,0)$ in the stratified sense. 
For a Milnor ball $B_\varepsilon$ of sufficiently small radius $\varepsilon > 0$ 
and representatives $X$ and $f$ of the space and the function we let 
\[
  \partial X := \partial B_\varepsilon \cap X
\]
be the real link of $(X,0)$ and 
\[
  M = X \cap B_\varepsilon \cap f^{-1}(\{\delta\})
\]
the Milnor fiber of $f$ for some $\varepsilon \gg \delta > 0$. 
Denote the regular part of $X$ by $X_{\reg}$. Then clearly by 
construction of $\partial X$ and $M$ 
\[
  \partial X_{\reg} := \partial B_\varepsilon \cap X_{\reg}, \quad 
  M_{\reg} := M \cap X_{\reg} 
\]
are the regular loci of $\partial X$ and $M$, respectively.

\begin{lemma}
  \label{lem:VariationOnRegularLocus}
  The natural maps in cohomology 
  \[
    H^k(\partial X_{\reg}) 
    \to
    H^{k}(M_{\reg}) 
  \]
  are injective for $k=d-1$ and an isomorphism for $k<d-1$. 
\end{lemma}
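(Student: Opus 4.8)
The plan is to reduce the assertion to a local Lefschetz-type statement about the fibre of $f$ restricted to the open stratum $X_{\reg}$.

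First I would replace $\partial X_{\reg}$ by a model sitting inside the Milnor ball. By the conic structure of the Whitney stratified pair $(X,X_{\sing})$ at the origin, for a sufficiently small representative the pair $(X\cap\overline B_\varepsilon,\,X_{\sing}\cap\overline B_\varepsilon)$ is homeomorphic to the cone over $(\partial X,\partial X_{\sing})$; deleting the subcone over $X_{\sing}$ shows that the inclusion
\[
  \partial X_{\reg}=\partial B_\varepsilon\cap X_{\reg}\hookrightarrow \overline B_\varepsilon\cap X_{\reg}
\]
is a homotopy equivalence. Under this identification the maps in the statement become the restriction maps along $M_{\reg}\hookrightarrow\overline B_\varepsilon\cap X_{\reg}$, so it suffices to prove that the pair $(\overline B_\varepsilon\cap X_{\reg},\,M_{\reg})$ has vanishing integral homology in degrees $<d$; the cohomological assertion then follows from the long exact sequence of the pair together with the universal coefficient theorem.

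Next I would use that $f$ has no stratified critical points other than the origin and $0\notin X_{\reg}$, so that $f\colon X_{\reg}\cap B_\varepsilon\to\CC$ is a holomorphic submersion; choosing $\varepsilon\gg\delta>0$ to be a good representative we may in addition assume the fibres of $f$ meet $\partial B_\varepsilon$ transversally along $X_{\reg}$. Ehresmann's fibration theorem for manifolds with corners then shows that $N:=X_{\reg}\cap\overline B_\varepsilon\cap f^{-1}(\overline D_\delta)\to\overline D_\delta$ is a trivial fibre bundle with fibre $M_{\reg}$, hence deformation retracts onto $M_{\reg}$. It therefore remains to see that, up to homotopy, $\overline B_\varepsilon\cap X_{\reg}$ is obtained from $N$ by attaching cells of real dimension $\geq d$. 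For this I would run stratified Morse theory on $\{\,|f|\geq\delta\,\}\cap X_{\reg}\cap\overline B_\varepsilon$ using a Morse perturbation of $-|f|^2$ (equivalently $\operatorname{Re}(1/f)$ where $f\neq0$): since $\log|f|$ is pluriharmonic and $f$ is submersive, this function has no critical points in the interior $X_{\reg}\cap B_\varepsilon$, so every handle attachment comes from a critical point on $\partial B_\varepsilon\cap X_{\reg}$, and — exactly as in L\^e's and Teissier's analysis of good representatives and in the discussion of outward pointing critical points in Section~\ref{sec:EulerCharacteristicComplexLinks} (cf. \cite{Zach17}) — for a suitable choice of $\varepsilon\gg\delta$ these boundary critical points are either outward pointing, hence irrelevant, or, being critical points of the real part of a holomorphic-type function on the $d$-dimensional complex manifold $X_{\reg}$, of Morse index $\geq d$. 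Flowing along the perturbed gradient then exhibits $\overline B_\varepsilon\cap X_{\reg}$ as $N$ with cells of dimension $\geq d$ attached, which, combined with the previous two steps, finishes the argument.

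The main obstacle is the boundary bookkeeping of the last step: one must establish that for a good representative the comparison function has on $\partial B_\varepsilon\cap X_{\reg}$ only outward pointing critical points and critical points of index $\geq d$, i.e. that no low-dimensional ``vanishing cells'' hide near the boundary of the Milnor ball; equivalently, one may invoke a local Lefschetz hyperplane theorem for the regular value $\delta$ of the submersion $f|_{X_{\reg}\cap B_\varepsilon}$, but the non-properness of $X_{\reg}$ and the presence of the sphere $\partial B_\varepsilon$ are exactly what make this point the delicate one. Everything else is a formal consequence of the fibration theorem and the long exact sequences.
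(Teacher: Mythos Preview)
Your outline is a genuinely different route from the paper's, and steps 1--4 are essentially fine (with the caveat that Ehresmann must be replaced by Thom's first isotopy lemma on the ambient stratified space and then restricted to the open stratum, since $f\colon N\to\overline D_\delta$ is not proper on $X_{\reg}$; and step 1 tacitly assumes $0\notin X_{\reg}$). The real issue is step 5, which you yourself flag: the claim that a Morse perturbation of $-|f|^2$ on $\{|f|\geq\delta\}\cap X_{\reg}\cap\overline B_\varepsilon$ has only outward--pointing boundary critical points or critical points of index $\geq d$ on $\partial B_\varepsilon\cap X_{\reg}$ is not justified, and I do not see how to obtain it directly. The restriction of $|f|^2$ (or $\operatorname{Re}(1/f)$) to the \emph{real} hypersurface $\partial B_\varepsilon\cap X_{\reg}$ is no longer the real part of a holomorphic function on a complex manifold, so the usual ``half--dimensional index'' argument does not apply; and the good choice $\varepsilon\gg\delta$ controls transversality of fibres $f^{-1}(t)$ to $\partial B_\varepsilon$ only for $|t|\leq\delta$, which gives you nothing about tangencies in the region $|f|>\delta$ you are trying to traverse.

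The paper avoids this difficulty altogether by working on $\partial X$ rather than on $\overline B_\varepsilon\cap X$. Using the ``inflated'' Milnor--L\^e fibration $\arg f\colon \partial X\setminus K\to S^1$ one decomposes $\partial X_{\reg}$ as the mapping torus of the monodromy on $M_{\reg}$ glued to a collar $K_{\reg}\times D^2$. A short chain of excision and K\"unneth identifications then gives
\[
  H^k(\partial X_{\reg},M_{\reg})\ \cong\ H^{k-1}(M_{\reg},K_{\reg}),
\]
so the whole statement reduces to the single input $H^k(M_{\reg},K_{\reg})=0$ for $k<d-1$, which is exactly the local Lefschetz theorem for the open stratum available from stratified Morse theory for non--proper functions (Goresky--MacPherson). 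In effect, the fibration structure on $\partial X$ absorbs the ``boundary bookkeeping'' that your step 5 leaves open; if you want to salvage your approach, the cleanest fix is to use that same fibration to show that $\{|f|\geq\delta\}\cap\overline B_\varepsilon\cap X_{\reg}$ deformation retracts onto its outer boundary $\partial B_\varepsilon\cap\{|f|\geq\delta\}\cap X_{\reg}$, after which one is back to the pair $(M_{\reg},K_{\reg})$ anyway.
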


\begin{proof}
  It is well known that we may ``inflate'' the Milnor-L\^e fibration for 
  $f$ and identify its total space with an open subset of $\partial X$ itself:
  \[
    \xymatrix{
      X \cap B_\varepsilon \cap f^{-1}\left( \partial D_\delta \right) \ar[r]^\cong 
      \ar[d]^{f} & 
      \partial X \setminus f^{-1}(D_\delta) \ar[d]^{\arg f} \\
      \partial D_\delta \ar[r]^{\cong} & 
      S^1.
    }
  \]
  See for example \cite[Part II, Chapter 2.A, Proposition 2.A.3]{GoreskyMacPherson88}, 
  or \cite[Part II, Chapter 2.4]{GoreskyMacPherson88} for the particular case 
  where $f$ is linear.

  Since $f$ has isolated singularity on $(X,0)$ it is a stratified 
  submersion on $X$ near the boundary $K := \partial X \cap f^{-1}(\{0\})$ 
  of the central fiber. We may therefore identify 
  \[
    \partial X \cap f^{-1}(D_\delta) \cong K \times D_\delta
  \]
  and extend the fibration by $\arg f$ over $K \times D_\delta^*$ 
  to the whole complement of $K \subset \partial X$.
  
  The assertion is now a consequence of the variation sequence for the regular 
  loci.
  It is evident from Thom's first isotopy lemma that 
  \[
    \arg f \colon \partial X \setminus K \to S^1 
  \]
  respects 
  the stratification of $M \subset \partial X \setminus K$ induced from $X$.
  Therefore, the common chain of 
  isomorphisms for the variation sequence 
  \begin{eqnarray*}
    H^k\left( \partial X_{\reg}, M_{\reg} \right) & \cong & 
    H^k\left( \partial X_{\reg}, 
    M_{\reg} \cup \left( \partial X_\reg \cap f^{-1}( \overline D_\delta ) \right) \right) \\
    &\cong& H^{k}\left( M_{\reg} \times [0,1], 
    K_{\reg} \times [0,1] \cup M_{\reg} \times \{0,1\} \right) \\
    &\cong& H^{k}\left( (M_{\reg}, K_{\reg}) \times ([0,1], \partial [0,1] ) \right) \\
    &\cong& H^{k-1}( M_{\reg}, K_{\reg} )
  \end{eqnarray*}
  restricts to the regular loci for every $k$. 
  Here, we deliberately identified $K_{\reg}$ with the boundary part 
  $\partial B_\varepsilon \cap M_{\reg}$ of the regular locus of $M$. 
  In the last step we used the 
  K\"unneth formula for pairs of spaces, see e.g. \cite[Theorem 3.21]{Hatcher02}.
  For a more thorough treatment see also 
  \cite[Part II, Chapter 2.5]{GoreskyMacPherson88}.
  
  With these identifications, the long exact sequence of the pair 
  $( \partial X_\reg, M_\reg)$ reads 
  \[
    \dots 
    \to 
    H^{k-1}( M_\reg, K_\reg ) \to 
    H^k( \partial X_\reg ) \to 
    H^k( M_\reg ) \to 
    H^{k}( M_\reg, K_\reg ) \to 
    \dots
  \]
  The assertion now follows from the fact that $H^k( M_\reg, K_\reg ) = 0$ 
  for all $k < d-1 = \dim_\CC M_\reg$ which is due to complex stratified Morse theory 
  for non-proper Morse functions, see \cite[Introduction, Chapter 2.2]{GoreskyMacPherson88}.
\end{proof}

\begin{remark}
  The cited theorem \cite[Introduction, Chapter 2.2]{GoreskyMacPherson88} is a statement on 
  the relative homology for the smooth locus of a projective algebraic variety 
  and its intersection with a generic hyperplane. The statement can be generalized 
  to germs of complex analytic sets and their intersections with a suitable
  ball, cf. \cite[Introduction, Chapter 2.4]{GoreskyMacPherson88}. Then one can 
  use complex stratified Morse theory for non-proper Morse functions to study 
  the connectivity of the pair $(M_\reg, K_\reg)$ via a Morsification 
  $\rho$ of the squared distance function to the origin. 
  The key observation is that for the regular locus $M_\reg$ 
  the local Morse datum at a critical point is always $(d-2)$-connected. 
  The last statement can easily be derived by induction on dimension from 
  \cite[Part II, Chapter 3.3, Corollary 1]{GoreskyMacPherson88}.
\end{remark}

Now suppose we are given $(X,0) \subset (\CC^q,0)$ as above and 
an admissible sequence of linear forms $l_1,\dots,l_d$ in the 
sense of Definition \ref{def:AdmissibleSequenceOfLinearForms} 
and Corollary \ref{cor:ParticularChoiceOfAdmissibleLinearForms}.
Then we can apply Lemma \ref{lem:VariationOnRegularLocus} inductively 
to $f = l_{i+1}$ on $(X \cap D_i,0)$:

\begin{proposition} Let $(X,0) \subset (\CC^q,0)$ be an equidimensional 
  reduced complex analytic germ of dimension $d$, and 
  $l_1, l_2, \dots, l_d$ an admissible sequence of linear forms 
  as in Corollary \ref{cor:ParticularChoiceOfAdmissibleLinearForms}. 
  Let $\mathcal K^k_\reg$ and $\mathcal L^k_\reg$ be the regular loci 
  of the real and complex links of codimension $k$ of $(X,0)$. 
  Then for arbitrary $0\leq k< d$ the natural maps 
  \[
    H^i(\mathcal K^0_\reg ) \to H^i(\mathcal K^k_\reg) \to 
    H^i(\mathcal L^k_{\reg}) 
  \]
  are isomorphisms for every $i<d-k-1 = \dim \mathcal L^k$ 
  and injective for $i=d-k-1$. 
  \label{prp:IsomorphismLowDimensionalHomology}
\end{proposition}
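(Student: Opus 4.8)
The plan is to reduce the statement to an iterated application of Lemma \ref{lem:VariationOnRegularLocus}, applied not to $(X,0)$ itself but to each slice $X_j := X\cap D_j$ of the admissible flag, and then to concatenate the maps so obtained.

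\emph{Setup.} First I would record that, since the flag $\mathcal D$ from Corollary \ref{cor:ParticularChoiceOfAdmissibleLinearForms} consists of generic (in particular transverse) slices, each $X_j$ is reduced and equidimensional of dimension $d-j$, inherits a Whitney stratification from $X_{j-1}$ with $(X_j)_\reg = (X_{j-1})_\reg\cap D_j$, and hence $\mathcal K^{j+1}_\reg = \mathcal K^j_\reg\cap\{l_{j+1}=0\}$, while $\mathcal K^j_\reg$ and $\mathcal L^j_\reg$ are the regular loci of the real link and of the Milnor fibre of $l_{j+1}$ on $(X_j,0)$. Admissibility says exactly that $l_{j+1}$ annihilates no limiting tangent space of $(X_j,0)$ at $0$; a limiting argument of the kind used in the proof of Lemma \ref{lem:CompatibilityLemmaForPolarVarieties} then promotes this to the statement that $l_{j+1}$ is a stratified submersion on a punctured neighbourhood of $0$ in $X_j$, i.e.\ it has an isolated stratified singularity there. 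After choosing a nested system of Milnor radii $\varepsilon_0\gg\varepsilon_1\gg\dots$ and associated $\delta_j$ compatibly (the usual shrinking argument), Lemma \ref{lem:VariationOnRegularLocus} applies to each pair $(X_j, l_{j+1})$ for $0\le j<d$, under the dictionary $\partial X\leftrightarrow\mathcal K^j$, $M\leftrightarrow\mathcal L^j$, $K\leftrightarrow\mathcal K^{j+1}$, and ``$d$'' $\leftrightarrow d-j$.

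\emph{Two maps per level, and their concatenation.} From this one application at level $j$ I would extract two maps. The statement of Lemma \ref{lem:VariationOnRegularLocus} gives that $H^i(\mathcal K^j_\reg)\to H^i(\mathcal L^j_\reg)$ is an isomorphism for $i<d-j-1$ and injective for $i=d-j-1$. The \emph{proof} of that lemma moreover yields the vanishing $H^i(\mathcal L^j_\reg,\mathcal K^{j+1}_\reg)=0$ for $i<\dim_\CC\mathcal L^j = d-j-1$; plugging this into the long exact sequence of the pair $(\mathcal L^j_\reg,\mathcal K^{j+1}_\reg)$ shows that the restriction $H^i(\mathcal L^j_\reg)\to H^i(\mathcal K^{j+1}_\reg)$, along the inclusion $\mathcal K^{j+1}_\reg\cong \partial B_{\varepsilon_j}\cap\mathcal L^j_\reg\hookrightarrow\mathcal L^j_\reg$ from the proof of the lemma, is an isomorphism for $i<d-j-2$ and injective for $i=d-j-2$. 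Splicing these for $j=0,1,\dots,k$ produces the chain
\[
  H^i(\mathcal K^0_\reg)\to H^i(\mathcal L^0_\reg)\to H^i(\mathcal K^1_\reg)\to\cdots\to H^i(\mathcal K^k_\reg)\to H^i(\mathcal L^k_\reg).
\]
Using the product trivialization $\mathcal K^j_\reg\cap l_{j+1}^{-1}(\overline D_{\delta_j})\cong\mathcal K^{j+1}_\reg\times\overline D_{\delta_j}$ from the proof of Lemma \ref{lem:VariationOnRegularLocus} I would check that the two maps at level $j$ compose into a homotopy-commutative triangle with the plain inclusion $\mathcal K^{j+1}_\reg\hookrightarrow\mathcal K^j_\reg$, so that the partial composites in the chain coincide with the natural restriction maps of the proposition. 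Finally I would track the ranges: among the maps of the first kind occurring before $\mathcal K^k_\reg$ the narrowest is at level $j=k-1$ (iso for $i<d-k$), and among those of the second kind it is again at level $j=k-1$ (iso for $i<d-k-1$, injective for $i=d-k-1$); hence $H^i(\mathcal K^0_\reg)\to H^i(\mathcal K^k_\reg)$ is an isomorphism for $i<d-k-1$ and injective for $i=d-k-1$, and postcomposing with the first-kind map at level $k$ (same ranges) transfers this to $H^i(\mathcal K^0_\reg)\to H^i(\mathcal L^k_\reg)$. Since $\dim\mathcal L^k = d-k-1$, this is exactly the assertion.

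The main obstacle I foresee is the second map per level: it requires reaching into the proof of Lemma \ref{lem:VariationOnRegularLocus} to reuse its connectivity input $H^i(M_\reg,K_\reg)=0$ rather than just its statement, and it requires being careful that the two incarnations of $\mathcal K^{j+1}_\reg$ — the real link of the slice $X_{j+1}$, and the boundary of the Milnor fibre of $l_{j+1}$ on $X_j$ — are identified by one and the same diffeomorphism throughout, so that the concatenated maps are genuinely the natural ones of the proposition. Everything else — the transversality of the slices, the isolated-singularity property of the $l_{j+1}$, and the nested choice of radii — is standard and already built into Corollary \ref{cor:ParticularChoiceOfAdmissibleLinearForms}.
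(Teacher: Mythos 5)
Your proposal is correct and follows essentially the same route as the paper: the paper proves the statement by induction on $k$, where the inductive step combines (i) the identification $\mathcal K^k_\reg \cong \partial \mathcal L^{k-1}_\reg$ and the vanishing $H^i(\mathcal L^{k-1}_\reg,\mathcal K^k_\reg)=0$ for $i<d-k$ via the long exact sequence, and (ii) a fresh application of Lemma~\ref{lem:VariationOnRegularLocus} to $l_{k+1}$ on $(X\cap D_k,0)$ — exactly the two maps per level you extract. Your version simply unrolls that induction into an explicit splice, and your range-bookkeeping and the care you flag about identifying the two descriptions of $\mathcal K^{j+1}_\reg$ match what the paper does.
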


\begin{proof}
  We proceed by induction on the codimension $k$ of the complex links. 
  For $k=0$ this follows 
  directly from Lemma \ref{lem:VariationOnRegularLocus}. 

  Now suppose the statement holds up to codimension $k-1$. 
  As already discussed in the proof of Lemma \ref{lem:VariationOnRegularLocus}, 
  the real link $\mathcal K^k$ can be identified with the boundary 
  of $\mathcal L^{k-1}$. 
  But the pair $(\mathcal L^{k-1}_\reg, \mathcal K^k_\reg)$ 
  is cohomologically $d-k-1$-connected due to the LHT and the long exact sequence yields 
  \[
    0 \to H^i(\mathcal L^{k-1}_\reg) \overset{\cong}{\longrightarrow} H^i( \mathcal K^k_\reg) \to 0 
  \]
  for $i<d-k-1$ and 
  \[
    0 \to H^{d-k-1}(\mathcal L^{k-1}_\reg) \to H^{d-k-1}(\mathcal K^k_\reg) \to 
    H^{d-k}(\mathcal L^{k-1}_\reg, \mathcal K^k_\reg) \to \cdots
  \]
  for the middle part. Since by our induction hypothesis the cohomology groups 
  $H^{i}(\mathcal L^{k-1}_\reg) \cong H^{i}(\mathcal K^0_\reg)$ are all isomorphic 
  for $i<d-k$, the first part of the assertion on $H^i(\mathcal K^0_\reg) \to H^i(\mathcal K^k_\reg)$
  follows.

  For the second part on $H^i(\mathcal K^0_\reg) \to H^i(\mathcal L^k_\reg)$ consider 
  \[
    l_{k+1} \colon (X \cap D_k,0) \to (\CC,0).
  \]
  By construction the Milnor fiber of this function is $\mathcal L^{k}$ 
  while $\mathcal K^{k}$ is the real link of $(X \cap D_k,0)$. All the relevant 
  cohomology groups of $\mathcal K^k$ have already been determined by 
  our previous considerations so that the remaining statements follow 
  readily from Lemma \ref{lem:VariationOnRegularLocus} applied to $f = l_{k+1}$.

%
\end{proof}

\subsection{Proof of Formulas (\ref{eqn:CohomologyOfSmoothRealLinksBelowMiddle}) -- (\ref{eqn:CohomologyOfSmoothComplexLinksMiddle})}
As remarked earlier, the relevant range for the codimension $i$ 
for the real and complex links 
$\mathcal K^i(M_{m,n}^{r+1},0)$ and 
$\mathcal L^i(M_{m,n}^{r+1},0)$ 
of the generic determinantal variety $(M_{m,n}^{r+1},0)$ is 
\[
  \dim M_{m,n}^{r} = (m+n)(r-1) - (r-1)^2 \leq i < (m+n)r-r^2 = \dim M_{m,n}^{r+1}.
\]
We may assume $m \leq n$. Then for these values of $i$ 
the dimension of the complex links does not exceed 
\[
  d(i):= \dim_\CC \mathcal L^i(M_{m,n}^{r+1},0) 
  = (m+n)r - r^2 - i - 1 < m+n-2r+1 \leq 2n -2r +1.
\]
Recall that due to the homogeneity of the singularity, 
the cohomology of the regular part of the classical 
real link of $(M_{m,n}^{r+1},0)$ is given by 
\begin{eqnarray*}
  H^\bullet(\mathcal K^0_\reg(M_{m,n}^{r+1},0)) 
  & \cong & H^\bullet( V_{m,n}^r ) \cong  H^\bullet( O_{m,n}^r ) \\
  & \cong & 
    \bigwedge \left( R_m^{r} \cdot \eta_1 \oplus \dots \oplus 
    R_{m}^{r} \cdot \eta_r \right)
\end{eqnarray*}
according to Proposition \ref{prp:CohomologyOfTheOrbitVarieties}. 
Now note that the bound on $d(i)$ 
assures that each and every generator $\eta_{j} \in H^{2n-2j+1}(O_{m,n}^r)$ 
is taken to a vanishing cohomology group of a \textit{smooth} complex 
link $\mathcal L^i(M_{m,n}^{r+1},0)$. 
The formulas (\ref{eqn:CohomologyOfSmoothRealLinksBelowMiddle}), 
(\ref{eqn:CohomologyOfSmoothComplexLinksBelowMiddle}), and 
(\ref{eqn:CohomologyOfSmoothComplexLinksMiddle}) now follow 
directly from Proposition \ref{prp:IsomorphismLowDimensionalHomology}.

\subsection{The middle cohomology groups}

The results of the previous section allow to determine 
the cohomology \textit{below the middle degrees} 
of the real and complex links $\mathcal K^i$ and $\mathcal L^i$ of codimension $i$ 
for a purely $d$-dimensional germ $(X,0)$, provided $i\geq \dim X_\sing$ 
so that $(X \cap D_i,0)$ has isolated singularity and the links are smooth. 
We shall now discuss how to obtain information on the remaining part 
of the cohomology and to which extend this is possible. 

When $i\geq \dim X_\sing$ 
the complex link $\mathcal L^i = \mathcal L^i_\reg$ is Stein and 
the higher cohomology groups vanish due to the LHT. We can use 
Lefschetz duality (see e.g. \cite[Theorem 3.43]{Hatcher02}) to 
identify homology with cohomology:
\[
  H^k(\mathcal L^i,\partial \mathcal L^i) \cong H_{2(d-i-1)-k}(\mathcal L^i) 
  \quad \textnormal{and} \quad 
  H^k(\mathcal L^i) \cong H_{2(d-i-1)-k}(\mathcal L^i, \partial \mathcal L^i).
\]
Note that the middle homology group $H_{d-i-1}(\mathcal L^i)$ is known to 
always be free; this is the case even for arbitrary smooth Stein manifolds. 
The real link 
$\mathcal K^i$ is an oriented smooth compact manifold of real dimension 
$2d-2i-1$ and we have Poincar\'e duality (cf. \cite[Theorem 3.30]{Hatcher02}):
\[
  H^k( \mathcal K^i) \cong H_{2d-2i-1-k}( \mathcal K^i)
\]
for all $k$. All these cohomology groups sit in the classical variation sequence 
with its middle part being of particular interest:
\[
  0 \to 
  H^{d-i-1}( \mathcal K^i ) \to 
  H^{d-i-1}( \mathcal L^i )  
  \overset{\mathrm{VAR}}{\longrightarrow} 
  H^{d-i-1}( \mathcal L^i, \partial \mathcal L^i) \to
  H^{d-i}( \mathcal K^{i} )
  \to 0.
\]

\medskip
We return to the particular case of the generic determinantal singularity 
$(X,0) =(M_{m,n}^{r+1},0) \subset (\CC^{m\times n},0)$.
For the smooth complex links $\mathcal L^i = \mathcal L^i(M_{m,n}^{r+1},0)$ we can use the knowledge of 
the Euler characteristic from the polar multiplicities 
in Section \ref{sec:EulerCharacteristicComplexLinks}, Formula 
(\ref{eqn:FormulaForEulerCharacteristic}) to also determine 
the rank of the middle cohomology group: 
\begin{equation}
  b_{d-i-1}(\mathcal L^i) = 
  \underbrace{\left( \sum_{j=i}^{d}(-1)^{j} m_0\left( P_{d-j}(X,0) \right) \right)}_{ 
    (-1)^{d-i-1} \chi( \mathcal L^i ) 
  }
  + \underbrace{\left(\sum_{j=0}^{d-i-2} (-1)^{d-i-j} b_j(O_{m,n}^r) \right).}_{
    -(-1)^{d-i-1} \sum_{j=0}^{d-i-2} (-1)^j b_j(\mathcal L^i)
  }
  \label{eqn:MiddleBettiNumberComplexLink}
\end{equation}
Switching from integer to rational coefficients, this allows us to fully compute 
the rational cohomology, 
but unfortunately this method does \textit{not} allow to detect torsion in the middle 
cohomology group with integer coefficients.
However, so far no example of a smooth complex link of a generic determinantal variety is known 
for which the middle cohomology group 
$H^{d-i-1}(\mathcal L^i(M_{m,n}^{r+1},0))$ does have torsion. 

\medskip
Being an oriented smooth compact manifold of odd real dimension, $\chi(\mathcal K^i)$ 
is always zero and therefore computation of the Euler characteristic does not 
help in this case; not even the full \textit{rational}
cohomology of $\mathcal K^i$ can be computed 
by our methods. It should be noted, however, that there are examples 
for which torsion appears: 

\begin{example}
  Consider the singularity $(Y,0) := (M_{2,3}^2\cap D_2,0) \subset (\CC^{2\times 3},0)$ 
  for a generic 
  plane $D_2$ of 
  codimension $2$.
  This is an isolated normal surface singularity, 
  the simplest one of the ``rational triple points'' discussed by Tjurina \cite{Tjurina68}. 
  It was shown in \cite{Zach18BouquetDecomp} that the 
  so-called Tjurina modification $\pi \colon Y' \to Y$ of $(Y,0)$ is smooth and 
  hence a resolution of singularities for $(Y,0)$. The space $Y'$ is 
  isomorphic to the total space of the bundle 
  \[
    Y' \cong | \OO_{\PP^1}(-3) |
  \]
  and hence the complex link 
  $\mathcal K^2 = \mathcal K^2(M_{2,3}^2,0)$ can be identified with the sphere bundle 
  of $\OO_{\PP^1}(-3)$. Then a part of the Euler sequence for this bundle reads 
  \[
    \dots H^0(\PP^1) \overset{\cup e}{\longrightarrow} H^2(\PP^1) \to 
    H^2(\mathcal K^2) \to 0.
  \]
  But with the canonical generators for $H^\bullet(\PP^1)$, the cup product with the 
  Euler class $e$ is simply multiplication by $-3$ and we find that 
  \begin{equation}
    H^2(\mathcal K^2(M_{2,3}^2,0)) \cong \ZZ/3\ZZ.
    \label{eqn:TorsionExampleForTheRealLink}
  \end{equation}
  The complex link $\mathcal L^2 = \mathcal L^2(M_{2,3}^2,0)$, i.e. the ``Milnor fiber'' 
  in the variation sequence, is known to have middle cohomology 
  $H^1(\mathcal L^2) \cong \ZZ^2$; see e.g. \cite{Zach18BouquetDecomp}. Since 
  $H^1(\mathcal L^2, \partial \mathcal L^2) \cong H_1(\mathcal L^2)\cong \ZZ^2$ is also free 
  and the map $\mathrm{VAR}$ necessarily has rank $2$, it follows that 
  \[
    H^1(\mathcal K^2(M_{2,3}^2,0)) = 0.
  \]
\end{example}

\medskip
To conclude this section, we mention one last case that might be of particular interest:

\begin{remark}
  \label{rem:MiddleCohomologyForRank1}
  For $r=1$ the generic determinantal varieties $(M_{m,n}^2,0)$ all have isolated singularity 
  and therefore the classical real and complex links $\mathcal K^0 = \mathcal K^0(M_{m,n}^2,0)$ 
  and $\mathcal L^0 = \mathcal L^0(M_{m,n}^2,0)$ are already 
  smooth. In this case we know all the cohomology groups of $\mathcal K^0$  
  \[
    H^\bullet( \mathcal K^0(M_{m,n}^2,0)) \cong H^\bullet( \PP^{m-1} \times S^{2n-1} )
    \cong H^\bullet( \PP^{m-1}) \otimes_\ZZ \bigwedge \ZZ \varepsilon_{2n-1}
  \]
  which is free Abelian in all degrees.
  Using the variation sequence we infer that the middle cohomology of $\mathcal L^0$ 
  can not have torsion: By Poincar\'e duality also the relative cohomology group to the 
  left is isomorphic to the middle homology group of $\mathcal L^0$. Now the latter 
  is known to always be free Abelian for Stein manifolds. 
  Therefore we have 
  \begin{equation}
    H^k(\mathcal L^0(M_{m,n}^2,0)) \cong 
    \begin{cases}
      \ZZ & \textnormal{ if $k\leq (m-r)r$ is even } \\
      0 & \textnormal{ otherwise.}
    \end{cases}
    \label{eqn:CohomologyClassicalComplexLinkRankOne}
  \end{equation}
\end{remark}

We would like to mention one particular consequence of the previous remark. 
It is well known that on a locally complete intersection $X$ the constant 
sheaf $\ZZ_X[\dim X]$ is perverse for the middle perversity; see e.g. 
\cite[Theorem 5.1.20]{Dimca04}. Most of the generic determinantal varieties 
$M_{m,n}^s$ are not complete intersections and we can now show that this 
algebraic property is also reflected in their topology:

\begin{corollary}
  For  $1<s\leq m < n$, i.e. for non-square matrices, 
  the constant sheaf $\ZZ_{M_{m,n}^{s}}[\dim M_{m,n}^s]$ 
  on the generic determinantal variety, shifted by its dimension, 
  is never a perverse sheaf for the middle perversity.
\end{corollary}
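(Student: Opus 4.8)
The plan is to invoke the standard characterization of perversity for a constant sheaf and to exhibit a single stratum of $M_{m,n}^s$ along which the cosupport condition fails. Set $N=\dim_\CC M_{m,n}^s$. Because $M_{m,n}^s$ is equidimensional, the support conditions for $\ZZ_{M_{m,n}^s}[N]$ hold automatically, so this sheaf is perverse for the middle perversity if and only if the cosupport conditions hold along every stratum $S$ of the rank stratification, cf.\ \cite{Dimca04}. The first step is to translate the cosupport condition along a stratum $S$ of complex codimension $c$ into a vanishing statement for the real link $\mathcal K(M_{m,n}^s,S)$. Using the local topological product structure of a Whitney stratified space — a neighbourhood of a point of $S$ in $M_{m,n}^s$ is homeomorphic to an open subset of $S$ times the open cone over $\mathcal K(M_{m,n}^s,S)$, see \cite[Part~I, Chapter~1]{GoreskyMacPherson88} — together with the K\"unneth formula for exceptional pullbacks and the attaching triangle relating $i_S^!$, $i_S^*$ and $i_S^*Rj_*j^*$ for the open inclusion $j$ of the complement of $S$, one computes that $\mathcal H^{k}\big(i_S^!\,\ZZ_{M_{m,n}^s}[N]\big)$ is the constant sheaf on $S$ with stalk $\widetilde H^{\,k+N-1}\big(\mathcal K(M_{m,n}^s,S);\ZZ\big)$ when $k+N\ge 1$, and is zero otherwise. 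Since $\dim_\CC S=N-c$, the cosupport condition $\mathcal H^{k}\big(i_S^!\,\ZZ_{M_{m,n}^s}[N]\big)=0$ for $k<-(N-c)$ is therefore equivalent to
\[
  \widetilde H^{\,j}\big(\mathcal K(M_{m,n}^s,S);\ZZ\big)=0\qquad\text{for }0\le j\le c-2 .
\]
For the open stratum $c=0$ and there is nothing to check; for an isolated complete intersection singularity this recovers the classical $(N-2)$-connectivity of its link, in agreement with \cite[Theorem~5.1.20]{Dimca04}.

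The second step is to choose the stratum $S=V_{m,n}^{s-2}$, which is a genuine stratum of $M_{m,n}^s$ since $s\ge 2$. Its complex codimension $c$ equals the dimension of a normal slice, and by the recursive pattern of the generic determinantal varieties recalled in Section~\ref{sec:PolarMultiplicities} such a normal slice is $M_{m',n'}^2$ with $m'=m-s+2$ and $n'=n-s+2$; hence $c=\dim_\CC M_{m',n'}^2=m'+n'-1$ and $\mathcal K(M_{m,n}^s,V_{m,n}^{s-2})\cong\mathcal K^0(M_{m',n'}^2,0)$. From $s\le m$ and $m<n$ one gets $m'\ge 2$, $n'>m'$, hence $n'\ge 3$; in particular $M_{m',n'}^2$ has an isolated singularity, so $\mathcal K^0(M_{m',n'}^2,0)$ is a smooth compact manifold homotopy equivalent to $V_{m',n'}^1=M_{m',n'}^2\setminus\{0\}$, and thus by Lemma~\ref{lem:HomotopyEquivalenceForCompactSubgroups} to the compact orbit model $O_{m',n'}^1$. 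By Proposition~\ref{prp:CohomologyOfTheOrbitVarieties} together with Remark~\ref{rem:FiberBundleStructureIsCohomologicallyTrivial} (equivalently Remark~\ref{rem:MiddleCohomologyForRank1}, or a direct Gysin computation for the circle bundle over the Segre variety $\PP^{m'-1}\times\PP^{n'-1}$) one has $H^\bullet\big(\mathcal K^0(M_{m',n'}^2,0)\big)\cong H^\bullet\big(S^{2n'-1}\times\PP^{m'-1}\big)$, and since $m'\ge 2$ and $2n'-1\ge 5$ this yields $\widetilde H^{2}\big(\mathcal K^0(M_{m',n'}^2,0)\big)\cong H^{2}(\PP^{m'-1})\cong\ZZ\neq 0$.

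Finally one checks the numerics: $c-2=m'+n'-3=m+n-2s+1$, and $m\ge s$, $n\ge s+1$ give $c-2\ge 2$. Thus $j=2$ lies in the range $0\le j\le c-2$ on which the criterion of the first step demands vanishing, while $\widetilde H^{2}\big(\mathcal K(M_{m,n}^s,V_{m,n}^{s-2})\big)\neq 0$. Hence the cosupport condition along $V_{m,n}^{s-2}$ fails and $\ZZ_{M_{m,n}^s}[N]$ is not a perverse sheaf for the middle perversity.

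The step I expect to be the main obstacle is the first one: stating and justifying with care the translation of the cosupport condition into reduced cohomology of the real link of a stratum. This is bookkeeping with the attaching triangle and the conical local structure of Whitney stratified spaces, but it requires some care — one must note in particular that the support side is automatic for a constant sheaf on an equidimensional space, and that the relevant degree range degenerates exactly for the open stratum (which is why the global shift by $N$, rather than by $\dim S$, is consistent with perversity). Once this is in place, the remainder is the identification of an appropriate normal slice with a rank-one generic determinantal variety and the cohomology of its link, both of which are already available from the preceding sections.
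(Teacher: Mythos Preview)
Your proof is correct and follows essentially the same route as the paper: both pick the stratum $V_{m,n}^{s-2}$, identify its normal slice with $(M_{m-s+2,n-s+2}^2,0)$, use Remark~\ref{rem:MiddleCohomologyForRank1} to find $H^2$ of the real link equal to $\ZZ$, and check that $2\le c-2$ so the perversity criterion is violated. The only difference is presentational: the paper invokes the rectified homological depth criterion from \cite{HammLe91} and \cite{Dimca04} directly, whereas you re-derive the equivalent cosupport condition $\widetilde H^{j}(\mathcal K)=0$ for $0\le j\le c-2$ from the conical local structure and the attaching triangle---which is exactly what underlies those references.
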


\begin{proof}
  For these values of $s,m$, and $n$ the rank stratification is the minimal 
  Whitney stratification of $M_{m,n}^{s}$. Now the constant sheaf can only be perverse
  (for the middle perversity)
  if the variety has a certain \textit{rectified homological depth}, 
  meaning that along every stratum $V_{m,n}^r \subset M_{m,n}^s$ 
  the real link $\mathcal K(M_{m,n}^s,V_{m,n}^r)$ along $V_{m,n}^r$ satisfies
  \[
    H_k(\mathcal K(M_{m,n}^s,V_{m,n}^r)) = 0 \textnormal{ for all } 
    k < \codim(V_{m,n}^r,M_{m,n}^s)-1.
  \]
  See \cite[Section 1.1.3]{HammLe91}, \cite[Theorem 1.4]{HammLe91} for a discussion 
  on rectified homological depth and 
  e.g. the proof of \cite[Theorem 5.1.20]{Dimca04} for its relation to perversity.
  For $r = s-2$ the codimension on the right hand side is 
  \[
    c = (m-s+2)(n-s+2) - (m-s+1)(n-s+1) = m+n -2s + 3 \geq 4
  \]
  for all values $1<s\leq m < n$. But according to Remark 
  \ref{rem:MiddleCohomologyForRank1} we have 
  \[
    H^2( \mathcal K^0(M_{m,n}^s,V_{m,n}^{s-2})) \cong 
    H^2(\mathcal K^0(M_{m-s+2,n-s+2}^{2})) \cong H^2(\PP^{m-s+1}) \cong \ZZ
  \]
  so that the criterion for the perversity given above is violated.
\end{proof}

\section{Implications for the vanishing topology of smoothable IDS}

\subsection{Proof of Theorem \ref{thm:MainConsequence}}

We consider a $\mathrm{GL}$-miniversal unfolding 
\[
  \mathbf A \colon (\CC^p,0) \times (\CC^k,0) \to (\CC^{m\times n},0)
\]
of the defining matrix $A$ for $(X_A^s,0)$ on 
$k$ parameters $t_1,\dots,t_k$. 
See, for instance, \cite{FruehbisKruegerZach21} for the underlying 
notion of $\GL$-equivalence and the existence and construction 
of such miniversal unfoldings. The induced deformation 
\[
  \xymatrix{
    X_A^s \ar@{^{(}->}[r] \ar[d] &
    \mathcal X_{\mathbf A}^s \ar[d]^\pi \\
    \{ 0 \} \ar@{^{(}->}[r] & 
    \CC^k
  }
\]
given by the projection of 
$\mathcal X_{\mathbf A}^s = \mathbf A^{-1}(M_{m,n}^s) \subset \CC^p\times \CC^k$ 
to the parameter space $\CC^k$ is versal in the sense that it covers all 
possible determinantal deformations of $(X_A^s,0)$ coming from this 
particular choice of a determinantal structure. 

Let the \textit{discriminant} $(\Delta,0) \subset (\CC^k,0)$ be the set of parameters 
$t = (t_1,\dots,t_k)$ with singular fibers $X_{\mathbf A}^s(t)$. For 
a suitable representative of the unfolding $\mathbf A$ we may choose 
a Milnor ball $B_\varepsilon \subset \CC^p$ around the origin and 
a polydisc $0 \in D_\delta \subset \CC^k$ in the parameter space 
such that 
\[
  \pi \colon \mathcal X_{\mathbf A}^s \cap B_\varepsilon \cap (D_\delta \setminus \Delta) 
  \to (D_\delta \setminus \Delta )
\]
is a smooth fiber bundle of manifolds with boundary with fiber $M_A^s$, the smoothing 
of $(X_A^s,0)$. 

Since for $t \notin \Delta$ the map 
\[
  A_t = \mathbf A(-,t) \colon B_\varepsilon \to \CC^{m\times n}
\]
is transverse to the rank stratification implies that $A_t$ does not degenerate 
on the fiber $X_{\mathbf A}^s(t)$ in the sense that it has constant rank 
$r = s-1$ at all points $x \in X_{\mathbf A}^s(t)$. Therefore the cokernel 
\[
  \CC^n \overset{A_t}{\longrightarrow} \CC^m \to E \to 0
\]
presented by $A_t$ is a well defined vector bundle of rank $m-r$ on the 
fiber over $t$. 

Variation of $t$ in $D_\delta \setminus \Delta$ does not change the fiber 
$X_{\mathbf A}^s(t)$ up to diffeomorphism. In the same sense, it varies the 
vector bundle $E$ in a $C^\infty$ way on these fibers, but does not change its 
isomorphism class as a smooth complex vector bundle. 

Due to the versality of $\mathbf A$, 
the various deformations of $(X_A^s,0)$ and its smoothing $M_A^s$ 
in the proof of the bouquet 
decomposition (\ref{eqn:BouquetDecomposition}) in 
\cite{Zach18BouquetDecomp} can be realized using piecewise 
smooth paths in the parameter space $\CC^k$. In particular, 
this leads to a representative $X_{\mathbf A}^s(t)$ of $M_A^s$ which has 
an embedding of the complex link $\mathcal L^{mn - p -1}(M_{m,n}^s,0) \hookrightarrow 
X_{\mathbf A}(t)$. Now it follows from the results in Section 
\ref{sec:BettiNumbersOfSmoothLinks} that the truncated cohomology of the 
Grassmannian $H^{\leq d}( \Grass(r,m) )$ is generated by the algebra 
of Segre classes of the tautological quotient bundle. Since by construction 
the pullback of this bundle to both $X_{\mathbf A}(t)$ and 
$\mathcal L^{mn - p -1}(M_{m,n}^s,0)$ is the vector bundle $E$ in question, 
the result follows.

\subsection{Proof of Corollary \ref{cor:SecondCohomologyGroupSmoothingICMC2Threefold}}

Let $M \subset U \subset \CC^5$ be a smoothing of the isolated Cohen-Macaulay 
threefold $(X,0) \subset \CC^5$ on some open set $U$ on which a perturbation 
\[
  A_t \colon U \to \CC^{m \times (m+1)} 
\]
of the matrix $A$ is defined. By definition the canonical sheaf on 
$M$ is given by $\omega_M = \mathcal Ext^2_{\OO_U}(\OO_M,\OO_U)$ which can be computed from 
a free resolution of $\OO_M$ as an $\OO_U$-module. Such 
a free resolution is provided by the deformation of the resolution of 
$\OO_{X,0}$ from the Hilbert-Burch theorem. Now it is easy to see 
from the proof of Theorem \ref{thm:MainConsequence} that the line bundle 
$\omega_M$ is presented by the restriction of $A_t$ to $M_A^s \subset U$, 
i.e. it is the vector bundle $E$ in Theorem \ref{thm:MainConsequence} 
and the claim follows.

\section*{Acknowlegdements}
\noindent
The author wishes to thank
Duco van Straten for introducing him to the work of Borel on 
the cohomology of homogeneous spaces and Sam Hagh Shenas Noshari for 
further helpful conversations on the topic, Terence Gaffney for
discussions on polar varieties and --multiplicities, Xiping Zhang for 
an exchange on their computation, and James Damon for 
conversations on the ``characteristic cohomology'' of determinantal singularities.

\printbibliography
\end{document}